\documentclass[a4paper,10pt]{amsart}
\usepackage{graphicx, mathabx, amssymb,amsfonts,amsmath,amsthm,newlfont}
\usepackage{epsfig,url}
\usepackage{fullpage}
\usepackage[usenames,dvipsnames]{color}
\usepackage{enumerate, enumitem}
\usepackage[colorlinks=true,linkcolor=red,citecolor=blue]{hyperref}
\usepackage{color}
\usepackage{young}

\usepackage[all,2cell]{xy} \UseAllTwocells \SilentMatrices

\newtheorem{thm}{Theorem}[section]
\newtheorem{cor}[thm]{Corollary}
\newtheorem{lem}[thm]{Lemma}
\newtheorem{prop}[thm]{Proposition}

\theoremstyle{definition}
\newtheorem{defn}[thm]{Definition}

\newtheorem{crit}[thm]{Criterion}

\newtheorem{ex}[thm]{Examples}
\newtheorem{Runex}[thm]{Running example}
\newtheorem{example}[thm]{Example}

\theoremstyle{remark}
\newtheorem{rem}[thm]{Remark}

\numberwithin{equation}{section}

\newcommand{\To}{\longrightarrow}

\newcommand{\Z}{\mathbb Z}
\newcommand{\Q}{\mathbb Q}

\newcommand{\C}{\mathbb C}

\newcommand{\R}{\mathbb R}
\newcommand{\N}{\mathbb N}

\newcommand{\dR}{\mathrm{dR}}

\newcommand{\m}{\mathfrak{m}}

\newcommand{\rec}{\mathrm{rec}}

\newcommand{\Or}{\mathcal{O}}
\newcommand{\ts}{\mathrm{Sup}}

\newcommand{\Gm}{\mathbb{G}_m}

\newcommand{\tr}{\mathrm{tr}}

\newcommand{\ff}{\mathfrak{f}}
\newcommand{\rank}{\mathrm{rank}}

\date{\today}

\begin{document}

\title{Mellin transforms, transfinite diameter and rational approximations of integrals}
\author{Francis Brown}
\address{University of Oxford, Radcliffe Observatory, Andrew Wiles Building, Woodstock Rd, Oxford, United Kingdom, OX2 6GG}
\email{francis.brown@all-souls.ox.ac.uk}
\keywords{Mellin Periods, Irrationality proofs,  Transfinite Diameter, Vandermonde determinants, Zeta values.}
\date{\today}

\maketitle

\begin{abstract}
We establish a higher-dimensional irrationality criterion for periods which are presented  as Mellin integrals depending on many parameters. 
The criterion is stated as an upper bound on the  multi-variate  transfinite diameter of the image of the domain of integration under the Mellin arguments. 
Most of the paper is devoted to studying notions of transfinite diameter relative to very general multivariate Vandermonde matrices. 
 As a proof of principle, we illustrate how this approach works with detailed computations in the case of  a 5-parameter family of integrals for $\zeta(2)$
 on $\mathcal{M}_{0,5}$, the moduli space of curves of genus 0 with 5 marked points.
This yields a `higher-dimensional' proof of the irrationality of $\zeta(2)$, based on an upper bound for a certain  kind of transfinite diameter associated to $\mathcal{M}_{0,5}$.    \end{abstract}

\section{Introduction}
Following Ap\'ery and Beukers,  many known irrationality proofs of periods, such as  $\zeta(2)$ or $\zeta(3)$,  involve
constructing families of linear forms which provide rational approximations to the periods in question. 
These linear forms commonly arise from  integrals which depend on a  large number of  additional parameters, typically  greater than the dimension of the integral, which generate a huge family of additional  linear forms. However, the existing criteria for irrationality make no use of these  linear forms and their underlying  geometric structure.
 In this note we introduce a `multi-parameter' irrationality criterion which does exploit them. 
Our guiding principle  is that one should try to   replace ad hoc diophantine  arguments    with  general constructions in algebraic geometry. 

A motivating question for this investigation was the following: does there exist a method for proving the  irrationality of  period integrals  which \emph{improves} as their dimension  increases? 
All known constructions do the exact opposite, and become significantly worse with increasing dimension (or weight). Indeed  the quality of  known approximations to periods of higher weight, e.g., for the odd zeta values $\zeta(2n+1)$, rapidly become very poor indeed as $n$ increases.   As long as this situation persists,  proving irrationality results for periods  by this method is a game where the goalposts are forever moving out of reach.

\subsection{Main example} 
A fascinating geometric feature of period integrals  which  becomes  \emph{richer} with increasing  dimension is the  existence of additional parameters. 
To illustrate, consider the  integrals: 
\begin{equation} \label{intro:Ihijkl} 
 I(h,i,j,k,\ell) = \int_{[0,1]^2}  \frac{x^h (1-x)^i y^k (1-y)^j }{(1-xy)^{i+j-\ell}}  \frac{dxdy}{1-xy}\end{equation} 
which depend on five integer parameters  $h,i,j,k,\ell\geq 0$. They are canonically associated to $\mathcal{M}_{0,5}$ and one shows that $I(h,i,j,k,\ell)$ is a linear form in $\Z \zeta(2) + \Q$.  The \emph{denominator} of $I(h,i,j,k,l)$ is  the smallest positive integer $D$ such that $D I(h,i,j,k,\ell) \in \Z \zeta(2) + \Z$.  The Ap\'ery family  is obtained from the `diagonal'  
\[ I(n,n,n,n,n) = a_n \zeta(2) + b_n \ , \]
where $a_n, D_n b_n \in \Z$.
Since the integrals on the left-hand side tend  to zero exponentially fast, and  the denominators $D_n$ are  not too large, the approximation $\zeta(2) \sim -b_n/a_n$ is sufficient to prove that $\zeta(2) \notin \Q$.

Rhin and Viola's remarkable method \cite{RVzeta2, RVzeta3}, based on earlier work by Hata and others, selects a set of fixed integers    $a_1,\ldots, a_5>0$ to construct a better family    $I(a_1 n, a_2 n, \ldots, a_5 n)$. 
Their preferred choice is $I(12n, 12n, 14n, 14n, 13n)$, whose arguments form a line which lies very  close to the diagonal.  The key point is that changing the direction of this line away from the diagonal will typically compromise the asymptotic behavior of the integrals, but lead to a significant improvement of their denominators. The mechanism for this  is the so-called `group method', introduced in \emph{loc. cit.}, which exploits a rich group of symmetries  satisfied by  the full 5-parameter family of integrals to prove that specific primes  do not divide the denominators  of certain integrals $I(h,i,j,k,\ell)$.   This group structure seems to exist in quite some generality but does not yet have a satisfying explanation from the point of view of algebraic geometry.  Quite how to optimise the choice of the parameters $(a_1,\ldots, a_5)$ is something of an art which is not fully disclosed in the existing literature. 

Whilst  the group method, which involves selecting a single line of integrals, can lead to spectacular improvements in the irrationality measure, it still does not fully exploit the whole 5-dimensional space of parameters. Furthermore, there is a trade-off between the quality of the denominators of the integrals, and the quality of their asymptotics. Indeed, improving the denominators typically leads to worse asymptotics, and improving the asymptotics to poorer denominator bounds. 

These observations were the starting point for the present investigation, as part of a broader  study of the fascinating  properties of Mellin periods and their associated motives. 

\subsection{Mellin periods and the geometry of numbers}
 We consider an algebraic Mellin integral
\begin{equation}\label{introIs}   I(s_1,\ldots, s_r) = \int_{\sigma} f_1^{s_1} \ldots f_r^{s_r} \omega \qquad \mathrm{Re}(s_i)\geq 0
\end{equation}
on a smooth affine variety $X$ of dimension $d$, where  $\omega$ is a regular differential $d$ form on $X$, 
 $\sigma \subset X(\C)$ is a relative  $d$-chain with boundary  contained in  the points of a divisor  $Y\subset X$, and $f_1,\ldots, f_r: X \rightarrow \mathbb{A}^1$
are regular functions on $X$.
We assume that $X,Y, f_1,\ldots, f_r$ are all defined over $\Q$.

The finite-dimensionality of algebraic de Rham cohomology implies that there exists a finite set of periods $\xi_1,\ldots, \xi_m$ such that, for every set of integers $n_1,\ldots, n_r\geq 0$ 
\begin{equation} \label{introIns}  I(n_1,\ldots, n_r)  \in \Q \xi_1 + \ldots + \Q \xi_m\ .
\end{equation} 
By clearing the  denominator $D_{\underline{n}} \in \Z $, where $\underline{n} = (n_1,\ldots, n_r)$  we have 
\[  D_{\underline{n}}  \,   I(\underline{n}) \in  \Z \xi_1 + \ldots + \Z \xi_m\ .\]
Thus a  Mellin integral  \eqref{introIs}  is  a machine for constructing many linear forms in periods. When the $f_i$ vanish along the boundary of the domain of integration, these linear forms will typically be small. 
See \cite{DinnerParties} for a systematic study when $X$ is a (partial compactification of the) moduli space of curves of genus $0$ with  marked points, and $\xi_i$ are multiple zeta values.  In all our examples, $r$ is at least as large as the dimension $d$. For $\zeta(2)$, which will be our main running example, there are $r=5$ parameters, and $m=2$ periods, namely $\xi_1=\zeta(2)$ and $\xi_2=1$. 
\\

How to make use of the plethora of linear forms \eqref{introIns}? Consider all such linear forms where the $n_i$ are bounded above by a fixed integer $M$.  We wish to find integers $c_{\underline{n}}\in \Z$ so that the linear combination   
\[  \sum_{\underline{n}: n_i\leq M}  c_{\underline{n}}  D_{\underline{n}} I(\underline{n})  \quad \in \quad  \Z \xi_1 + \ldots + \Z \xi_m  \]
is as small as possible, and, importantly, non-zero (this cannot be guaranteed unless  the `motivic' versions \cite{NotesMot} of the $\xi_i$  are  linearly independent,  which we shall always assume).  The geometry of numbers already provides a method for tackling this problem.

\begin{thm} (Minkowski) Let $A \in M_{n\times n} (\R)$ be a matrix with $\det(A) \neq 0$. Then there exists an integer vector $c = (c_1,\ldots, c_n) \in \Z^n$ such that 
\[ 0 <  \left|  \sum_{j=1}^n A_{ij} c_j  \right| \leq |\det(A)|^{\frac{1}{n}} \quad \hbox{ for some } 1\leq i \leq n \ . \]
\end{thm}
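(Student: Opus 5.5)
This is a direct consequence of Minkowski's convex body theorem, applied to the lattice $\Lambda = A\Z^n \subset \R^n$. This lattice has covolume $|\det A|$, which is non-zero by hypothesis. The statement asks for a non-zero lattice vector $Ac$ whose coordinates are small. One point of reading: since $A$ is invertible, $Ac\neq 0$ exactly when $c\neq 0$. So we take the intended meaning to be that there is $c\in\Z^n\setminus\{0\}$ with $\max_i |(Ac)_i| \leq |\det A|^{1/n}$. The displayed inequality, with its strict positivity, then holds for some index $i$, namely any index where $(Ac)_i\neq 0$, and such an index exists because $Ac\neq 0$.

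Set $t = |\det A|^{1/n}$ and consider the cube
\[ K = \{ x\in \R^n : |x_i|\leq t \text{ for all } i\}. \]
It is convex, compact and centrally symmetric, with volume $(2t)^n = 2^n |\det A| = 2^n \operatorname{covol}(\Lambda)$. Minkowski's theorem, in its version for compact bodies, requires $\operatorname{vol}(K)\geq 2^n\operatorname{covol}(\Lambda)$, and this holds with equality. It therefore produces a non-zero $v\in \Lambda\cap K$. Writing $v=Ac$ with $c\in\Z^n\setminus\{0\}$ gives the claim.

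For completeness, I would also recall the proof of Minkowski's theorem itself. First treat the strict case $\operatorname{vol}(K)>2^n\operatorname{covol}(\Lambda)$ using Blichfeldt's pigeonhole argument. The set $\tfrac12 K$ has volume greater than $\operatorname{covol}(\Lambda)$, so its translates by $\Lambda$ cannot all be disjoint modulo a fundamental domain. This gives $x\neq y$ in $\tfrac12 K$ with $x-y\in\Lambda$. By symmetry and convexity, $x-y = \tfrac12 x + \tfrac12(-y)\in K$.

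The equality case is the only delicate step. I would handle it by compactness: apply the strict case to the cubes $(1+\varepsilon)K$. Each one contains a non-zero lattice point, and all of these lie in the bounded set $2K$. A discrete lattice meets a bounded set in only finitely many points, so some non-zero lattice point lies in $(1+\varepsilon)K$ for a sequence $\varepsilon\to 0$. That point therefore lies in $K$, since $K$ is closed.
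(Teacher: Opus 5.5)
Your proof is correct and follows the paper's route. The paper deduces this statement from Minkowski's linear forms theorem (the box version with $\prod_i \varepsilon_i \geq |\det A|$, which is exactly the convex body theorem for a box and the lattice $A\Z^n$) by taking all $\varepsilon_i = |\det A|^{1/n}$ and noting that $Ac \neq 0$; it does not prove Minkowski's theorem itself, whereas you also sketch that proof. One small slip in your Blichfeldt step: for $x,y\in\tfrac12 K$ you should write $x-y=\tfrac12(2x)+\tfrac12(-2y)\in K$, since $\tfrac12 x+\tfrac12(-y)$ equals $\tfrac12(x-y)$, not $x-y$.
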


This theorem suggests a very natural approach for generating small linear forms in the periods $\xi_1,\ldots, \xi_r$.  We  construct a matrix $Q^{\sigma}$ whose entries are integrals $I(\underline{n})$ for $n_i \leq M$, in such a way that its determinant  is non-zero. We will mostly  study a particular construction where   $Q^{\sigma}$ is positive-definite, but there are other possibilities which exploit the existence of Picard-Fuchs recurrence relations (or contiguity relations) between the $I(\underline{n})$.  In our situation, the determinant of $Q^{\sigma}$ can be bounded  above by using a greatly generalized version of the transfinite diameter, which  measures  the size of the image of the domain of integration $\sigma$ under the map $f=(f_1,\ldots, f_r)$ (see below). 

After clearing denominators, we obtain a matrix $A^{\sigma}_{ij}$ which satisfies 
\[  A^{\sigma} = A^{(1)} \xi_1 + \ldots +  A^{(m)} \xi_m\]
where $A^{(i)}$ are matrices with integer entries\footnote{Such  \emph{matrix} linear forms in periods merit investigation in their own right: here we consider them to be `small' when $\det(A^{\sigma})$ is small, but there are other ways to measure the size of a matrix  linear form in $\xi_1,\ldots, \xi_m$}. Minkowski's theorem may then be applied to the matrix $A^{\sigma}$ to deduce the existence of a non-vanishing linear form.  

The denominators occuring in the entries of the matrix $Q^{\sigma}$ are related to the orders of the poles of the integral $I$ along some suitable compactification of $X$. Whilst a general theorem  has not currently been worked out, we expect that  denominator bounds can systematically be obtained  using  overconvergent  $p$-adic de Rham cohomology. Our examples suggest, furthermore,  that denominator bounds can be significantly improved  by exploiting   congruences  in algebraic de Rham cohomology.

A key point is that the multi-parameter approach described above enables one to decouple the problem of optimising the denominators from the asymptotics of integrals.  In fact, examples suggest that a useful strategy is to fill the input matrix $Q^{\sigma}$ with  integrals with small denominators, irrespective of whether they individually give good approximations  or not. Minkowski's theorem will do the work of extracting a linear combination of these integrals which is small in absolute value. 

\begin{rem}
In the case when there is a single parameter $r=1$, our method is an effective version of the determinant criterion of Zudilin \cite{ZudilinDet}, who gave examples of periods (logarithms) to  show that significant improvements can already be obtained  in a one-dimensional setting. This follows from  the fact that the transfinite diameter of the unit interval $[0,1]$ is equal to $\frac{1}{4}$, which is  significantly smaller than $1/e\sim 0.3678\cdots$.  However, he remarks that there are no further gains to be made as the dimension increases, since the transfinite diameter of an $N$-dimensional cube $[0,1]^N$ is no smaller.  

One purpose of this note was to test if this conclusion  is indeed the case, since it stands to reason that the larger the space of available linear forms, the better the approximations one might hope to obtain. Indeed
  we find that his remark is unduly pessimistic, for  at least three reasons. Firstly,  our  criterion depends on  the transfinite diameter of the region $f(\sigma)$, which is typically much smaller than the unit cube. Secondly,  the notion of transfinite diameter which is involved is somewhat different  from the ones usually found in the literature. Finally, and most importantly, experiments suggest  that  the denominator estimates can  be dramatically improved  when one takes into account more parameters.
\end{rem} 

\subsection{Contents and results}
This paper has two main strands. The first is to set up a framework for constructing matrices $Q^{\sigma}$ associated to Mellin integrals and bounding their determinant in terms of a suitable generalisation of the notion of transfinite diameter.  We believe that this underlies  an interesting and very  general algebraic structure associated to a family of  algebraic  Mellin transforms \eqref{introIs}.

The second is to  describe some experimental computations exploring the integrals \eqref{intro:Ihijkl}  as a `proof of principle'.    The main achievement of this paper is that we are able to predict
quite accurately the asymptotics of the matrix determinants using our general theory.  Whilst there is (intentionally!) no possible new irrationality result to be gained from such an example, it enables us to probe the hypothesis that the method described here \emph{improves} as more parameters are brought into play, and suggests that more substantial improvements might be gained from  considering period integrals in higher dimension.

\subsubsection{Quadratic forms $Q^{\sigma}$ and Vandermonde determinants}
We now describe a simple construction of matrices $Q^{\sigma}$. Suppose we are given a finite sequence 
\begin{equation} \label{introff}   \ff_1 ,\ff_2 ,\ldots , \ff_N   \end{equation}
 of linearly independent polynomials  $\ff_i$ in    $f_1,\ldots, f_r$ on $X$.    Consider the symmetric matrix:
   \[  Q^{\sigma} =   \left(\int_{\sigma} \ff_i  \ff_j \,   \omega  \right)_{1\leq i,j\leq N} \quad \in \quad  M_{N\times N}( \Q\xi_1 + \ldots+  \Q \xi_m) \ .\]
    Its  entries are linear combinations of  the integrals \eqref{introIns}. 
The determinant of $Q^{\sigma}$ only depends on the free $\Z$-module $\mathcal{M}_N= \oplus_{i=1}^N \Z \ff_i$.  Consider
  \[  t_{\mathcal{M}_N}(\sigma) =  \sup_{z_1,\ldots, z_N \in \sigma} \left| \det ( \ff_i(z_j) )_{1\leq i,j\leq N} \right| \ . \]
The right-hand side  involves a  generalised Vandermonde determinant associated to the module $\mathcal{M}_N$, and one has the following upper bound
 \[  \left|\det (Q^{\sigma}) \right|  \leq    \left( t_{\mathcal{M}_N}(\sigma)\right)^2 \ ,  \]
 for all sufficiently large $N$. Furthermore, under some mild assumptions on the defining data  $f_1,\ldots, f_r, \omega, \sigma$ we may show that the matrix $Q^{\sigma}$ is positive-definite and hence has non-vanishing determinant.
 
 Let $D^{\ell},D^r \in M_{N\times N}(\Q)$ such that $A^{\sigma} = D^{\ell} Q^{\sigma} D^r$ has entries which are integer coefficients in the $\xi_i$, and denote the `denominator'  of this data by  $\delta_{\mathcal{M}_N} =  \det |D^{\ell}D^r|$.  
Then Minkowski's theorem, applied to $A^{\sigma}$,   guarantees the existence of an integer linear form in $\xi_i$   satisfying 
\[ 0 \quad < \quad \left|  n_1 \xi_1 + \ldots + n_m \xi_m\right| \quad  < \quad        \left(\left( t_{\mathcal{M}_N}(\sigma)\right)^2  \delta_{\mathcal{M}_N} \right)^{\frac{1}{N}}\ . \]
See theorem \ref{thm: smalllinearform}. This gives the following multi-parameter irrationality criterion:

\begin{crit} \label{intro:crit}   Suppose that $m=2$ and $\xi_1= 1, \xi_2=\xi$ (or $m>2$, $\xi_1=1$, and each $\xi_i\in \Q[\xi]$, for $i>1$).  Suppose that there exist sequences \eqref{introff} of arbitrary length $N>\!>0$  such that 
\[  t_{\mathcal{M}_N}(\sigma)^2\,   \delta_{\mathcal{M}_N} \rightarrow 0    \ ,  \]
where $\delta_{\mathcal{M}_N}$ is defined relative to some choice of matrices $D^{\ell},D^r$ as above. 
Then $\xi$ is irrational.
\end{crit}
This criterion is suited to performing numerical experiments.  The reader may wish to turn directly to \S \ref{sect:MainExample} for some detailed computations in the case of the
integrals \eqref{intro:Ihijkl}, where we consider the cases with $r=1$, $r=2$ and finally the full set of $r=5$ parameters.  Experiments suggest, as expected, that the quality of the approximations can improve as the number of parameters increases.

In order to make use of the previous criterion, one needs to have good control over the  quantity $t_{\mathcal{M}_N}(\sigma)$, which is a precursor to the  transfinite diameter and is discussed below.
Most of the work presented here is devoting to studying it, and our results are in close agreement with the numerical data presented in \S \ref{sect:MainExample}.
The denominator $\delta_{\mathcal{M}_N}$, in this case, is  related to a possible  notion of `$p$-adic transfinite diameter' (see \S\ref{subsect:padic}, Remark \ref{rem: zeta2padic}).

\subsubsection{Supremal transfinite diameter}The data of a family of Mellin integrals can be encoded by a morphism of varieties
\[ f:  X \To \mathbb{A}^r\]
where $f= (f_1,\ldots, f_r)$.  We let $V_f \subset \mathbb{A}^r$ denote the Zariski-closure of its image. It is an affine scheme which we call the anciliary image variety. 

In order to better understand the quantities $ t_{\mathcal{M}_N}(\sigma)$, we shall replace the data of the functions \eqref{introff}  with an infinite rank $\Z$-module $\mathcal{N} \subset \Or(\mathbb{A}^r) \cong \Z[x_1,\ldots, x_r]$ equipped with an increasing filtration $\mathcal{N}_n\subset \mathcal{N}$ of submodules of finite rank $N_n = \mathrm{rank} \, \mathcal{N}_n$. We assume that the $\Z$-module $\mathcal{M}=f^* \mathcal{N}$ is free, which amounts to the independence of \eqref{introff}.  
The Mellin transforms define a symmetric  bilinear pairing
\begin{eqnarray}  \mathcal{N}  \otimes_{\Z} \mathcal{N} & \To&  \R \nonumber \\ 
  \langle \m  ,   \m'  \rangle  & = &  \int_{\sigma}  f^*(\m) f^*(\m') \, \omega  \ . \nonumber 
  \end{eqnarray}
Under some mild assumptions it is non-degenerate, and even positive-definite. If the module $\mathcal{N}_n$ has basis $\mathcal{N}_n = \bigoplus_{i=1}^{N_n} \m_i \Z$ then the generalised Vandermonde matrix
\[ V_{\mathcal{N}_n}  =( \m_i(z_j)) \]
is well-defined up to permutations of rows and columns. The absolute value of its  determinant is the function  on $(\C^r)^{N_n}$ given by 
$| \det V_{\mathcal{N}_n}(z)  | =  | \det \mathcal{N}_n|$. 
  Given a region $\tau \subset \C^r$, we set $t_{\mathcal{N}_n} = \sup_{z\in \tau^{N_n}} | \det V_{\mathcal{N}_n}(z)  | $ . Finally we define the \emph{supremal transfinite diameter} to be
  \begin{equation} \label{intro:suptau}   \mathrm{Sup}_{\mathcal{N},e} (\tau) = \limsup_n   \left(t_{\mathcal{N}_n} (\tau)\right)^{1/e^{\mathcal{N}}_n}\end{equation}
  where $e^{\mathcal{N}}_n$ are appropriately chosen integers which we call exponents.  For special cases of modules $\mathcal{N}$, and compact $\tau$, the supremal transfinite diameter reduces to various notions of higher-dimensional transfinite diameter which have been studied in the literature.  Much previous work has been concerned with the existence   of the limits \cite{Zaharjuta}, but  here we only need an upper bound, which is why it is enough to take a limit supremum, which exists unconditionally. Unfortunately,  the higher-dimensional transfinite diameters are only known for some quite limited sets $\tau \in \C^r$,  even for $r=2$, but by leveraging known computations  we can deduce fairly  accurate bounds for the family \eqref{intro:Ihijkl}.  A related notion of transfinite diameter on algebraic varieties 
 was recently studied in   \cite{Mau, CoxMau}. 
  
  The quantity $\mathrm{Sup}_{\mathcal{N},e}(\tau)$ measures, in a certain sense, both the size and shape of the region $\tau$. A crude rule of thumb is that it is typically proportional to the volume of $\tau$ raised to some power which depends on the dimension of $\tau$ and on the exponents $e$.

 If we define a limit of denominators (having fixed a choice of  period representatives $\xi_1,\ldots, \xi_m$) by 
 \begin{equation}  
 \delta_{\mathcal{N},e} = \limsup_n  \delta_{\mathcal{N}_n}^{1/e^{\mathcal{N}}_n}
 \end{equation} 
 then the irrationality criterion \ref{intro:crit}, may be rephrased,  after taking the limit, as
 \begin{crit}  \label{intro: limitcrit}
$
 \mathrm{Sup}_{\mathcal{N},e}(f (\sigma))^2 \,  \delta_{\mathcal{N},e} < 1 
 $ \end{crit}
 In other words, the size of the image of the integration domain $f(\sigma)$  inside the anciliary image variety $V_f(\C)$, as measured by the module of functions $\mathcal{N}$, should not be too large relative to denominators.

\subsection{Intuitive example} \label{intro: exampleintuitive}  In practice, the criterion \eqref{intro: limitcrit} amounts to establishing an upper bound on the  transfinite diameter of $f(\sigma)$. 
In order to gain  some intuition for the precise relationship, consider a family of examples where $m=2$,  i.e., a family of Mellin integrals over $\Q$
\[  I(n_1,\ldots, n_r) = \int_{\sigma} f_1^{n_1} \ldots f_r^{n_r} \omega \  \in \  \Q + \Q \xi\]
where  $\omega$ is positive on $\sigma$,  $f(\sigma)\subset \C^r$ is Zariski-dense, and 
\begin{equation} \label{intuitivedenom}  d^w_{n_1,\ldots, n_r}  I(n_1,\ldots, n_r) \in \Z + \Z \xi
\end{equation} 
for some `weight' $w\geq 0$,   where $d_{n_1,\ldots, n_r} =  \max_{1\leq i \leq r}\{\mathrm{lcm} \{1,\ldots, n_i\}\}$.  Alternatively, one can assume $m>2$ and that every $\xi_i$ for $i>2$ is a polynomial in $\xi$. This kind of denominator bound is fairly typical,  but not universally applicable since  the precise denominators vary from situation to situation. 

In view of the  bound \eqref{intuitivedenom},  we consider the matrix $Q^{\sigma}$ associated to the `rectangular' module 
\[\mathcal{N}_n = \bigoplus_{0\leq n_i \leq n}\,  x_1^{n_1} \ldots x_r^{n_r} \Z \ . \] 
 In other words, we form a symmetric matrix whose entries consists of  integrals $I(n_1,\ldots, n_r)$ with arguments in the range $0 \leq n_i \leq 2n$.  The exponents are asymptotically  $e_n^{\mathcal{N}} \sim \frac{r}{2} n^{r+1}.$  The supremal transfinite diameter $\ts^{\rec}$ is computed using  the determinant of a `rectangular'  multivariable Vandermonde matrix.

The  denominator  computed in lemma \ref{lem: HardDenomBound} and the irrationality criterion   \ref{intro: limitcrit}    reduces via \eqref{deltaboundforintroexample} to 
\begin{equation}  \label{intro: generalSupbound} 
 \ts^{\rec} ( f(\sigma)) \,  \exp \left( \frac{w}{r}  \frac{2r+1}{r+1} \right) <1 \   \end{equation}
 which is implied by the sufficient condition $  \ts^{\rec} ( f(\sigma)) \exp( \frac{2w}{r})<1$. 
 In the case when $r=1$,   $ \ts^{\rec} $ equals the classical 1-dimensional transfinite diameter $\mathrm{tr}$ and 
  we retrieve the criterion of Zudilin:
 \[  \mathrm{tr}( f(\sigma)) <   \exp \left(  -  \frac{3}{2}w \right) \ .  \] 
As we increase the number of parameters $r$, the threshold for irrationality  in \eqref{intro: generalSupbound}  improves. For instance, when  
 $r=w$, we obtain the   criterion for irrationality 
\[   \ts^{\rec} ( f(\sigma))   < \exp\left(-2 + \frac{1}{w+1}\right)\ . \] 
A universal  sufficient condition is thus $4\,   \ts^{\rec} ( f(\sigma))  <4  \exp\left(-2\right) = 0.5413\ldots$
Typically $f(\sigma) \subset [0,1]^r$ is contained in the unit square, which satisfies $\ts^{\rec}([0,1]^r)= \frac{1}{4}$. So for the  criterion to apply when $r=w$,  $\ts^{\rec}(f(\sigma))$ needs to be  (very roughly) less than  half  that of the unit square.  

\begin{example}   Consider the $\zeta(2)$ integrals  \eqref{intro:Ihijkl}, for which $w=2$ and $m=2$.  In \S\ref{sect:MainExample} we study in some detail  the two-parameter subfamily $I(n_2,n_1,n_2,n_1,n_2)$ which amounts to setting $r=2$ and letting 
\[ f_1 = \frac{y(1-x)}{1-xy} \quad , \quad f_2 = x(1-y) \ .  \]
All conditions assumed above hold. The map  $f=(f_1,f_2)$ maps the region $[0,1]^2$ to a subset of the unit square which lies below the hyperbola $\{(x,y): xy=0.091\}$. Criterion \eqref{intro: generalSupbound}
requires that 
\[ 4  \,\ts^{\rec}( f(\sigma)) < 4  \exp(-5/3) = 0.7555\ldots  \] 
Using the techniques developed in this paper, we can prove that $\ts^{\rec}( f(\sigma))\sim  0.14$ to an accuracy of about $\pm 0.01$, which is amply suffficient for a `2-dimensional'  irrationality proof for $\zeta(2)$. In this example, however, we observe that the   denominators for $Q^{\sigma}$ are considerably smaller than the rather crude  bounds used above, and a much weaker inequality is sufficient in practice to establish irrationality (for instance, we can comfortably replace $0.7555\ldots$ with $0.81$ in the above). 
 \end{example}

\subsection{Plan and comments}
In \S\ref{sect: setup}, we review some background on algebraic Mellin integrals, introduce the anciliary image variety and illustrate with some simple examples. In \S\ref{sect: QuadraticformsMellin} we define positive-definite matrices $Q^{\sigma}$ and bound their determinants.  In \S\ref{sect: VdMModules}  we define a very general notion of Vandermonde determinant associated to a module, 
and in \S\ref{sect: Suptransfinite} we define the corresponding notion of supremal transfinite diameter.  The literature is unfortunately lacking effective techniques for estimating the transfinite diameter as far as we are aware, but in this section we introduce some methods which give some upper  bounds for tensor products and direct sums of modules, which are in turn applied in \S\ref{sect: hyperbolaregion} to a 2-dimensional region bounded by the unit cube and a hyperbola.   In \S\ref{sect: smalllinearforms} we derive in some more detail the irrationality criterion  outlined in the introduction and \S\ref{sect: Comments} discusses generalisations, $p$-adic variants and so on. The reader may wish to turn immediately to \S\ref{sect:MainExample} where the family of integrals for $\zeta(2)$ are studied in several different ways.  These examples are not intended to be anything more than a proof of principle; we did not attempt to optimise the choices for the modules $\mathcal{N}$. Indeed it might be interesting to consider modules  whose monomial bases  in $r$ variables   are centered around  on one or more of the Rhin-Viola `lines' mentioned earlier.

There are many other families of integrals for which it would be interesting to study the multi-parameter irrationality criterion introduced here. Likewise, it would be very interesting to generalise the irrationality criterion \ref{intro: limitcrit} to a linear independence criterion, by bounding the numerators in the linear forms obtained from Minkowski's theorem. This will be postponed to another day.

 \subsection{Acknowledgements}
  This project has received funding from the European Research Council (ERC) 
  under the European Union's Horizon Europe programme
  (grant agreement No. 101167287).  
   For the purpose of Open Access, the  author has applied a CC BY public copyright licence to any Author Accepted Manuscript (AAM) version arising from this submission. 
   Many thanks to Wadim Zudilin for  discussions on related topics and Shachar Weinbaum for comments.




 \section{Mellin transforms and anciliary image variety} \label{setup}
We briefly review some properties of Mellin transforms before introducing the anciliary image variety. 

\subsection{Algebraic Mellin integrals} \label{sect: setup} 
Let $X$ be an affine  algebraic variety over $\Q$ equipped with a morphism to affine space of dimension $r>0$:
\begin{equation}\label{geom: mapf}  f: X \To \mathbb{A}^r= \mathrm{Spec} \, \Q[ x_1,\ldots, x_r] \ . 
\end{equation} 
Denote the coordinates  of $f$ by  $f_i = x_i \circ f$, for $1 \leq i \leq r$, so that  $f= (f_1,\ldots, f_r)$.
Let $\omega \in \Omega^d(X)$ (the global sections of the sheaf of regular $d$-forms)  be a regular differential form of degree $d$ and 
$\sigma \subset  X(\C)$   a relative $d$-chain whose boundary is contained in the complex points of a  divisor $Y \subset X$, which we can assume to be normal crossing. The integral
\begin{equation} \label{Isdefn}  I(s_1,\ldots, s_r) = \int_{\sigma}  f_1^{s_1} \ldots f_r^{s_r} \,\omega \end{equation}
defines a function of $s_1,\ldots, s_r$ which is holomorphic for $\mathrm{Re}(s_i)\geq 0$  and extends meromorphically to $\C^{r}$.  In some examples, the form $\omega$ may additionally have poles along the boundary of $\sigma$. One can reduce to the previous situation by performing blow-ups to resolve singularities locally  along the boundary of $\sigma$.

\subsubsection{Periods} \label{sect: periods} 
For non-negative integer values of $s_i$, the integrals $I(n_1,\ldots, n_r)$ are periods of $H^d(X,Y)$. In particular, there exist $m$  regular differential forms $\omega_1,\ldots, \omega_m\in \Omega^d(X)$,   whose relative de Rham cohomology classes are independent in $H_{\dR}^d(X,Y)$ such that for all $\underline{n}= (n_1,\ldots, n_r)$  non-negative integers there exist unique rational numbers $a_{1}(\underline{n}), \ldots, a_{m}(\underline{n})$  with:
\begin{equation} \label{ai(n)defn}   {[}f_1^{n_1} \ldots f_r^{n_r} \omega ] = a_{1}(\underline{n})  [\omega_1] + \ldots + a_m(\underline{n}) [\omega_m]  \qquad \in\quad  H_{\mathrm{dR}}^d(X,Y) \ .
\end{equation} 
Note that $m$ may be strictly smaller than $\dim_{\Q} \, H^d_{\mathrm{dR}}(X,Y)$.  If we denote by 
\[ \xi_i = \int_{\sigma} \omega_i \quad  \hbox{ for } \quad 1\leq i\leq m\]
then  $\xi_i$ is a period  of $H^n(X,Y)$ since   $\partial \sigma \subset Y(\C)$. We deduce the 
 \begin{lem} \label{lem: periodsxi} The Mellin integral \eqref{Isdefn} at integer values: 
\[  I(n_1,\ldots, n_r) =  a_{1}(\underline{n})  \xi_1 + \ldots + a_m(\underline{n}) \xi_m \quad \in \quad  \Q[\xi_1,\ldots, \xi_m] \]
is a $\Q$-linear form in finitely many periods $\xi_1,\ldots, \xi_m$ of $H^n(X,Y)$.
\end{lem}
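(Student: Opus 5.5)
The plan is to integrate the de Rham identity \eqref{ai(n)defn} over $\sigma$, using Stokes' theorem to justify that integration over $\sigma$ only depends on the relative cohomology class. Fix $\underline{n}$ with non-negative integer entries. The form $f_1^{n_1}\cdots f_r^{n_r}\omega$ is a regular $d$-form on $X$, since the $f_i$ are regular functions and the exponents are non-negative integers. Because $d=\dim X$, it is closed and so defines a class in $H^d_{\dR}(X,Y)$.

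\textbf{Step 1: the de Rham relation at the level of forms.} Take the $\omega_1,\ldots,\omega_m$ and the unique rationals $a_i(\underline{n})$ of \eqref{ai(n)defn}. The equality of classes in $H^d_{\dR}(X,Y)$ means that
\[ f_1^{n_1}\cdots f_r^{n_r}\omega - \sum_{i=1}^m a_i(\underline{n})\,\omega_i \]
is exact in the relative de Rham complex. Concretely, it equals $d\eta + \iota_*(\beta)$, where $\eta$ is a $(d-1)$-form on $X$ and $\beta$ is a $(d-1)$-form on the normal crossing divisor $Y$, with the compatibility condition that $\eta|_Y = \pm d\beta$ on the components of $Y$.

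This is the step needing most care. One must work with the cone or Čech description of relative cohomology for a normal crossing divisor $Y=\bigcup Y_j$ rather than with plain forms. The argument is cleanest phrased as follows: the period pairing $H_d(X,Y)\otimes H^d_{\dR}(X,Y)\to\C$ is well defined. This is the standard comparison between algebraic de Rham cohomology and Betti cohomology, which I would assume from the general theory.

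\textbf{Step 2: integrate over $\sigma$.} Since $\partial\sigma\subset Y(\C)$, the chain $\sigma$ defines a class in $H_d(X(\C),Y(\C))$. The integral $\int_\sigma$ therefore depends only on the relative cohomology class of the integrand. At the level of forms, Stokes gives
\[ \int_\sigma d\eta = \int_{\partial\sigma}\eta. \]
This boundary term is cancelled by the $Y$-components of the relative exactness data, because the boundary of $\partial\sigma$ lies in the intersections of the $Y_j$, and so on down the strata.

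Applying linearity of integration then yields
\[ I(\underline{n}) = \int_\sigma f_1^{n_1}\cdots f_r^{n_r}\omega = \sum_i a_i(\underline{n})\int_\sigma\omega_i = \sum_i a_i(\underline{n})\,\xi_i . \]
Convergence is not an issue: the integrands are regular on $X$ and $\sigma$ is compact, or one can appeal to the holomorphy of $I$ for $\mathrm{Re}(s_i)\geq 0$.

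\textbf{Step 3: conclude.} Each $a_i(\underline{n})$ lies in $\Q$, and each $\xi_i$ is a period of $H^d(X,Y)$ by definition, being the pairing of the rational Betti class $[\sigma]$ with the algebraic de Rham class $[\omega_i]$. Hence $I(\underline{n})$ is a $\Q$-linear form in the fixed finite set $\xi_1,\ldots,\xi_m$, which do not depend on $\underline{n}$.

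The existence of such a finite set is exactly finite-dimensionality of $H^d_{\dR}(X,Y)$. One takes the $[\omega_i]$ to be a basis of the subspace spanned by all the classes $[f^{\underline{n}}\omega]$, which gives $m\le\dim H^d_{\dR}(X,Y)$. Uniqueness of the $a_i$ follows from the independence of the $[\omega_i]$.
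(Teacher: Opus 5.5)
Your proposal is correct and follows the paper's route: the paper integrates the identity \eqref{ai(n)defn} over $\sigma$, which is a class in $H_d(X(\C),Y(\C))$ because $\partial\sigma\subset Y(\C)$, and then uses the well-defined period pairing and linearity to get $I(\underline{n})=\sum_i a_i(\underline{n})\xi_i$, exactly as in your Steps 2--3. Your explicit form-level description in Step 1 is slightly off: in the cone complex the difference is $d\eta$ with $\eta|_Y=\pm d\beta$ for a $(d-2)$-form $\beta$ on $Y$ (or the analogous \v{C}ech data on the strata of $Y$), not $d\eta+\iota_*\beta$. This does not affect the argument, since you ultimately rely on the standard period pairing.
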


\subsubsection{Holonomicity} It is well-known that, as we vary $n_i$, the rational numbers \eqref{ai(n)defn} satisfy holonomic recurrence equations (in several variables) (e.g., \cite{LoeserSabbah}). On a more basic level, the cohomology classes $ {[}f_1^{n_1} \ldots f_r^{n_r} \omega ] $  satisfy linear relations with coefficients which are polynomials in the $n_i$, akin to integration-by-parts identities.
From a different perspective, for every $1\leq i \leq r$, there  is a shift operator $\tau_i$ which represents the action of multiplication by $f_i$ on cohomology classes of degree $d$ in the twisted algebraic de Rham cohomology of 
$X\,  \setminus\, V(f_1\ldots f_r) $ relative to $Y$, with coefficients in the algebraic vector bundle with connection $(\mathcal{O}_X, \nabla)$, where $\nabla = d + \sum_{i=1}^r s_i \frac{d f_i}{f_i}$. In a suitable basis, these may represented by contiguity matrices, from which the matrices $Q^{\sigma}$ may often be derived. We will  not exploit this structure in the present work, but provide some details in the case of the integrals \eqref{intro:Ihijkl} in an Appendix.

\subsection{Anciliary image variety}  Let $(X,f)$ be as above. An important role is played by what we shall call the  \emph{anciliary image variety} associated to the data of a Mellin transform.

\begin{defn} Let $V_f\subset \mathbb{A}^r$   denote the Zariski closure of $f(X)$. 
By abuse, we shall simply write $f: X \rightarrow V_f$ for the natural map through which \eqref{geom: mapf} factorises. 
\end{defn}

The affine coordinate ring of  $V_f$  is the quotient $\Q[f_1,\ldots, f_r]/I$,  where $I$ is the ideal generated by the algebraic relations satisfied by the functions $f_i$. By abuse of notation, we write  
\begin{equation} f(\sigma) \subset V_f(\C) \ .\end{equation}
Two key quantities are the \emph{number of parameters} $r$, and the \emph{anciliary  dimension}  $\dim V_f \leq \dim X$.

%

\subsection{Some examples}  We give two very basic examples to illustrate the role  played by  the number of parameters  and of the ramification locus of the map $f: X \rightarrow V_f$.

\subsubsection{Rational numbers} Let  $X= \mathbb{A}^1$, $Y=\{0,1\}$ and $\sigma = [0,1]$. 
First let $r=1$,  and  let $f: X \rightarrow \mathbb{A}^1$ denote the map $f=x(1-x)$, where $x$  is the coordinate on   $\mathbb{A}^1$. 
 Set $\omega = dx$.  Then 
the family of integrals 
\[  I(n) =  \int_{0}^1  \left(x(1-x)\right)^n dx\]
are periods of $H^1(\mathbb{A}^1, \{0,1\})\cong \Q$  and are all rational. 
The anciliary image variety is  $V_f \cong  \mathbb{A}^1= \mathrm{Spec}\,  \Q[f]$. 
 The image  $f(\sigma) \subset V_f(\C)$ is the interval $[0,\frac{1}{4}]$, as the function $f$ is maximised on the interval $[0,1]$ at the point $x=\frac{1}{2}.$
The boundary points $\{0,\frac{1}{4}\}$ of  $f(\sigma)$ can be   
 determined algebraically. To see why, note that 
the critical locus of $f$ is the  vanishing locus of $f' = 1-2x$. Thus the discriminant of $f$ is
$\mathrm{Disc}(f) = \{ \frac{1}{4}\} \subset V_f$, and 
$f: X \rightarrow V_f = \mathbb{A}^1$ is a double covering of affine spaces  ramified at $\frac{1}{4}$. 
The locus $f(Y)$ is the single point  $\{0\} \subset V_f$.

Now let $r=2$,   and  let $f: X \rightarrow \mathbb{A}^2$ denote the map $f=(x,1-x)$.
Set $\omega = dx$. Then 
the following family of integrals generalises the previous example:
\[  I(n_1,n_2) =  \int_{0}^1     x^{n_1} (1-x)^{n_2}  dx \ . \]
They are  periods of $H^1_{\mathrm{dR}}(\mathbb{A}^1, \{0,1\}) \cong \Q$ and are again all rational.
The anciliary image variety  $V_f  \subset \mathbb{A}^2$ is defined by the locus $x_1+x_2 =1$ and is isomorphic to $\mathbb{A}^1$. Then $f: X\rightarrow V_f$ is an isomorphism and the image of $f(\sigma)$ is simply the unit interval in $V_f$ relative to the coordinate $x_1$.

\subsubsection{A logarithm} Let   $X= \mathbb{A}^1 \backslash \{-1\}$, $Y=\{0,1\}$ and $\sigma = [0,1]$. 
First  let $r=1$ and define $f: X \rightarrow \mathbb{A}^1$ by 
$f=\frac{x(1-x)}{(1+x)}$. Let $\omega = \frac{dx}{1+x}$. The associated integrals  
\[  I(n) =  \int_{0}^1  \left(\frac{x(1-x)}{1+x}\right)^n \frac{dx}{1+x}\]
are periods of $H^1(\mathbb{A}^1\backslash \{-1\}, \{0,1\})$ which is a  Kummer extension of $\Q(-1)$ by $\Q(0)$. It has two periods $\xi_1=1$ and $\xi_2= \log 2$ and one may easily deduce that $I(n) \in \Q + \Z \,\log 2$ (the coefficient of $\xi_1$ is an integer since it is obtained as  a  residue of the cohomology class of the integrand). 
The function $f'$ vanishes at $x = -1 \pm \sqrt{2}$, and the corresponding critical values of $f$ are $3 \pm 2 \sqrt{2}$.  Thus the boundary of $f (\sigma)$ is contained in $f(Y) \cup \mathrm{Disc}(f) = \{0\} \cup \{ 3 \pm 2 \sqrt{2}\}$ and indeed 
\[ f(\sigma) =  [ 0, 3- 2 \sqrt{2}]\ .\]

Now let $r=2$,    and  let $f: X \rightarrow \mathbb{A}^2$ denote the map $f=(\frac{x}{1+x},\frac{1-x}{1+x})$.
Set $\omega = \frac{dx}{1+x}$ and $\sigma = [0,1]$. Then 
the family of integrals 
\[  I(n_1,n_2) =  \int_{0}^1   \frac{x^{n_1} (1-x)^{n_2} }{(x+1)^{n_1+n_2}} \frac{dx}{(x+1) }\]
are periods of the same cohomology group as before. 
Thus  $I(n_1,n_2) $ is a $\Q$-linear combination of $1$ and $\log 2$.
The anciliary image variety  $V_f  \subset \mathbb{A}^2$ is  isomorphic to the affine line $2x_1 + x_2 =1$, and $f(Y)= \{ (0,1),  (\frac{1}{2},0)\}$. The region $f(\sigma)$ is thus the interval $[0,\frac{1}{2}]$ in the coordinate $x_1$.

 \section{Quadratic forms associated to Mellin transforms } \label{sect: QuadraticformsMellin}

\subsection{A bilinear form} Let $(X,f)$ be as in \S\ref{setup} and let $V_f$ denote the anciliary image variety. 
 Consider the  bilinear symmetric pairing
\begin{eqnarray} \label{bilinear}
\Or(V_f) \otimes_{\Z} \Or(V_f) & \To & \R    \\
g \otimes h & \mapsto &  \int_{\sigma} gh \, \omega \ . \nonumber 
\end{eqnarray} 
We may restrict this pairing to any free $\Z$-module
\[ \mathcal{M}  \subset  \Or(V_f)\ . \] 
If $\mathcal{M}$ has finite rank with  generators  $\ff_1,\ldots, \ff_N$, then this pairing is represented by a symmetric matrix.

\begin{defn} With these notations, define the $N\times N$ symmetric matrix 
\[ (Q^{\sigma}_N)_{ij} =   \int_{\sigma}  \ff_i \ff_j \, \omega   \qquad \hbox{ for  }   1\leq i,j\leq N \ . \] 
Note that the definition uses the multiplicative (ring) structure on  $\Or(V_f)$, but that we \emph{do not}  need to  assume that   $\mathcal{M}$ is stable under multiplication. 
\end{defn} 
The determinant $\det(Q^{\sigma}_N)$ is well-defined, since 
changing the basis of $\mathcal{M}$ by   $P \in \mathrm{GL}_N(\Z)$ results in the matrix 
$P^T  Q^{\sigma}_N P$ which has the same determinant.

\subsection{Positive-definiteness}
We make the following  assumptions.
\begin{enumerate} [label=(P\arabic*)] 
\item  The form $\omega$ is real and non-negative on $\sigma$.
\item The generators  $\ff_1,\ldots, \ff_N$ of $\mathcal{M}$ are real-valued on $\sigma$.
\item The domain $\sigma$ is Zariski-dense in $X$.
\end{enumerate}
In some  circumstances these assumptions can  be relaxed (see \S \ref{sect: Comments}).

\begin{lem}  \label{lem: posdefinite} Assume $(P1)$-$(P3)$. Then  the bilinear pairing \eqref{bilinear} is positive semi-definite. 
Let $\ff_1,\ldots, \ff_N $ be any elements in $\mathcal{O}(V_f)$.  The following are equivalent:

(i) The elements $\ff_1,\ldots, \ff_N$  are linearly independent. 

(ii)  The matrix  $Q^{\sigma}_{N}$ is positive-definite.
\end{lem}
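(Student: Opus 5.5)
The plan is to realise the pairing as an integral of a square and then use Zariski-density to rule out degeneracy. First, positive semi-definiteness. Take $g \in \mathcal{O}(V_f)$ that is real-valued on $\sigma$; by (P2) this holds for any real linear combination $g=\sum_i c_i \ff_i$. Then
\[ \langle g, g\rangle = \int_{\sigma} (f^*g)^2 \, \omega \geq 0 , \]
because by (P1) $\omega$ is a non-negative measure on $\sigma$ and the integrand $(f^*g)^2$ is non-negative. Applying this to $c^T Q^{\sigma}_N c = \int_\sigma (\sum_i c_i \ff_i)^2\omega$ for $c\in\R^N$ shows that $Q^{\sigma}_N$ is positive semi-definite. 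So the pairing \eqref{bilinear}, restricted to real-valued functions, is positive semi-definite.

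Next, (ii) $\Rightarrow$ (i). Suppose $\sum c_i \ff_i = 0$ in $\mathcal{O}(V_f)$ with $c \in \Z^N\setminus\{0\}$. Then $Q^{\sigma}_N c = 0$, so $Q^{\sigma}_N$ is singular and cannot be positive-definite.

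Finally, (i) $\Rightarrow$ (ii); this is the main step. Suppose $c \in \R^N$ satisfies $c^TQ^{\sigma}_N c = 0$, and put $h = f^*(\sum c_i \ff_i)$. Then $\int_\sigma h^2 \omega = 0$. Since $h^2\geq 0$ and $\omega\geq 0$ is continuous, $h$ vanishes wherever $\omega$ is strictly positive on the interior of $\sigma$. The zero locus of the regular $d$-form $\omega$ is a proper Zariski-closed subset (assuming $\omega \neq 0$), so $h$ vanishes on a non-empty open subset of $\sigma$. By (P3) this subset is Zariski-dense, hence $h=0$ on $X$.

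Because $V_f$ is the Zariski closure of $f(X)$, the map $f^*:\mathcal{O}(V_f)\to \mathcal{O}(X)$ is injective, so $\sum c_i \ff_i = 0$ with real coefficients. The $\ff_i$ have rational coefficients, so linear independence over $\Z$ (equivalently over $\Q$) implies independence over $\R$: the $\ff_i$ lie in $\Q[x]/I$, and $\R\otimes_\Q(\Q[x]/I)=\R[x]/I_\R$ is a flat base change, so independence is preserved. Hence $c=0$, and $Q^{\sigma}_N$ is positive-definite.

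The delicate point is the density step: we need every open piece of $\sigma$ where $\omega>0$ to be Zariski-dense, which I would deduce from (P3) by treating $\sigma$ as a real-analytic $d$-dimensional chain, each of whose open subsets is Zariski-dense by irreducibility of $X$.
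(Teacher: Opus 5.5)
Your proof is correct and is essentially the paper's argument: write $v^T Q^{\sigma}_N v=\int_{\sigma}P_v^2\,\omega$, obtain semi-definiteness from (P1)--(P2) and definiteness from (P3) plus injectivity of $f^*$, and note that the converse is trivial. You are also more careful than the paper, which simply asserts that the integral vanishes iff $P_v=0$ and tacitly uses independence over $\R$ rather than over $\Q$.

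For the density step you flag, there is a cleaner justification. The set $\sigma\setminus Z(\omega)$ is Zariski-dense because $\sigma$ is (by (P3)) and $Z(\omega)$ is a proper closed subset of the irreducible $X$. Irreducibility alone does not make arbitrary open pieces of a chain Zariski-dense: a piece lying in a complex curve is a counterexample, although $\omega$ restricts to zero there.
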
 

\begin{proof}  Let $v=(v_1,\ldots, v_N)^T \in \R^N$ and 
define an element $P_v \in \Or(V_f)\otimes_{\Z}\R$  by  
$ P_v  = \sum^N_{i=1} v_i    \ff_i$.
By assumption  (P1) and (P2),  the  integral 
\[  I (v)= \int_{\sigma}   P_v^2 \,   \omega  \geq 0 \]
 is  non-negative and vanishes if and only if $P_v=0$. Since
 \[   P_v^2= \left( \sum_{i=1}^N  v_i \ff_i  \right)^2 =\sum_{i,j=1}^N  v_i v_j \ff_i \ff_j  \ , \] 
  the integral $I(v)$  
  may be written as the  quadratic form
\[ I(v) = v^T  Q^{\sigma}_{N} \, v   \  .   \]
By  (P3), the restriction $\Or(X)\otimes_{\Q} \R \rightarrow \mathcal{C}^{\infty}(\sigma)$ is injective; the morphism 
$f^* : \Or(V_f) \otimes_{\Q}\R \rightarrow \Or(X)\otimes_{\Q} \R$ is injective by definition. 
Suppose that  $\ff_1,\ldots, 
\ff_{N}$ are linearly independent. Then $P_v$ vanishes on $\sigma$ if and  only if $v_i=0$ for all $i=1,\ldots, N$. Equivalently, $I(v)=0$  if and only if all $v_i=0$ vanish, and hence  $Q^{\sigma}_N$ is positive definite. The converse is trivial, since 
if  $\ff_1,\ldots, \ff_N$ are not independent,  there exists a non-trivial vector $v$ such that $P_v=0$, and hence $I(v)= v^T Q^{\sigma}_N v=0$. 
 \end{proof} 

In particular, when $\ff_1,\ldots, \ff_N$ are a basis for a free $\Z$-module $\mathcal{M}$, the determinant $ \det( Q^{\sigma}_{N} ) $ only depends on $\mathcal{M}$ and under assumptions $(P1)$-$(P3)$ we have 
 \[ \det( Q^{\sigma}_{N} ) >0 \ .\]

\subsection{Vandermonde determinant associated to $\mathcal{M}$}
Let  $X^{N}$ denote the $N$-fold product $X \times \ldots \times X$ and let $z_i: X^N \rightarrow X$ denote the projection onto the $i^{\mathrm{th}}$ component.
Let $\sigma^{N} \subset X^{N}(\C)$ denote the $N$-fold product of  the chains $\sigma$.

\begin{defn}
Let $\mathcal{M}   \cong \Z \ff_1 \oplus \ldots \oplus \Z \ff_N $ be a free $\Z$-submodule of $\mathcal{O}(V_f)$ of rank $N$.
For any $ z= (z_1,\ldots, z_{N}) \in X^N(\C)$ we consider an  $N\times N$ matrix 
\[ V_{\mathcal{M}} (z) =     (  \ff_j(z_i)) \quad 1\leq i,j\leq N\ ,  \]
whose rows are indexed by the components of $z$ and whose columns are indexed by $\ff_1,\ldots , \ff_N$, in some chosen order. Its determinant is well-defined up to a sign and hence its  square is well-defined (it is independent of the choice of generators of $\mathcal{M}$).
\end{defn} 

\begin{rem} \label{rem:VdMonlydependsonM} The natural map $\mathcal{O}(V_f)  \hookrightarrow \mathcal{O}(X) \rightarrow  C^{\infty} (\sigma)$,  which restricts functions to $\sigma$, induces a map  
$
\textstyle{\bigwedge^{\!\!N}} \mathcal{O}(V_f)   \To    C^{\infty} (\sigma^N) $. 
The function 
$ \left(\det V_{\mathcal{M}}(z)\right)^2 \   \in \      C^{\infty} (\sigma^N) $
is simply the square of the image of  a generator of 
$\det(\mathcal{M})\cong \Z$.
\end{rem}

The matrix $V_{\mathcal{M}}(z)$ is a generalised Vandermonde matrix. 

\subsection{Calculation of the determinant of $Q^{\sigma}_N$}

The $N$-fold  external tensor product of $\omega$   on $X^{N}$ will be denoted by
\[ \omega^{\boxtimes  N  } = \underbrace{\omega \boxtimes \omega \boxtimes \ldots \boxtimes \omega}_{N}  \  \in  \ \Omega^{dN}(X^N) \ . \]
Many thanks to a referee for pointing out that the following result goes back to the work of Heine and Sz\"ego in the theory of orthogonal polynomials \cite{Andreief1883}. 
\begin{prop} \label{propdetQub}  The determinant of $Q^{\sigma}_N$ is given by the integral:
\begin{equation}  \label{secondformulaDetQ} 
   \det Q^{\sigma}_{N} =  \frac{1}{N!} \int_{\sigma^{N}}  \left(  \det  V_{\mathcal{M}}(z) \right)^2\,     \omega^{\boxtimes  N}  \ . 
  \end{equation}
\end{prop}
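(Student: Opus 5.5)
This is the Andréief (Cauchy--Binet for integrals) identity. The plan is to expand the square of the Vandermonde determinant via the Leibniz formula, then use Fubini on the product chain $\sigma^N$, and finally recognise the result as $N!$ times the Leibniz expansion of $\det Q^{\sigma}_N$.

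\emph{Step 1: expand the integrand.} Write
\[ \det V_{\mathcal{M}}(z) = \sum_{\pi \in S_N} \mathrm{sgn}(\pi) \prod_{i=1}^N \ff_{\pi(i)}(z_i) \ . \]
Multiplying two copies gives
\[ \left(\det V_{\mathcal{M}}(z)\right)^2 = \sum_{\pi,\rho\in S_N} \mathrm{sgn}(\pi)\,\mathrm{sgn}(\rho) \prod_{i=1}^N \ff_{\pi(i)}(z_i)\,\ff_{\rho(i)}(z_i) \ . \]

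\emph{Step 2: factor each term by Fubini.} Each term is a product of functions of the separate variables $z_i$. Since $\omega^{\boxtimes N}$ is the external product and $\sigma^N$ is a product chain, Fubini gives
\[ \int_{\sigma^N} \prod_{i}\ff_{\pi(i)}(z_i)\,\ff_{\rho(i)}(z_i)\,\omega^{\boxtimes N} = \prod_{i=1}^N \int_\sigma \ff_{\pi(i)}\ff_{\rho(i)}\,\omega = \prod_{i=1}^N (Q^{\sigma}_N)_{\pi(i)\rho(i)} \ . \]
This is the only point needing care: the integrals must converge absolutely. That holds because the $\ff_i$ are regular on $X$, and the integrals are those of the Mellin integrand \eqref{Isdefn} at non-negative integer exponents, which converge by the standing assumptions. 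Orientation signs from the product chain are harmless, since each $\omega$ has even total degree $2d$ as an $\R$-form pairing on the product, so reordering factors introduces no sign.

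\emph{Step 3: reindex and recognise the determinant.} Set $i=\pi^{-1}(k)$, so that $\prod_i Q_{\pi(i)\rho(i)} = \prod_k Q_{k,\,\rho\pi^{-1}(k)}$. Put $\tau=\rho\pi^{-1}$, so that $\mathrm{sgn}(\pi)\mathrm{sgn}(\rho)=\mathrm{sgn}(\tau)$. For each fixed $\pi$, the sum over $\rho$ becomes the sum over $\tau$ and equals $\sum_\tau \mathrm{sgn}(\tau)\prod_k Q_{k\tau(k)} = \det Q^{\sigma}_N$. Summing over the $N!$ choices of $\pi$ gives
\[ \int_{\sigma^N}\left(\det V_{\mathcal{M}}(z)\right)^2 \omega^{\boxtimes N} = N!\,\det Q^{\sigma}_N \ , \]
which is \eqref{secondformulaDetQ}.

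The only mild obstacle is justifying the interchange of the finite sum with the integral and the product splitting in Step 2; both are immediate from absolute convergence. The combinatorics in Step 3 is routine.
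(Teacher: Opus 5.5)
Your proof is correct; it is the standard Andr\'eief argument, and it organises the symmetrisation differently from the paper. The paper runs in the opposite direction. It expands $\det Q^{\sigma}_N$ by Leibniz and uses Fubini to write it as $\int_{\sigma^N}\bigl(\prod_i \ff_i(z_i)\bigr)\det V_{\mathcal{M}}(z)\,\omega^{\boxtimes N}$. It then symmetrises geometrically: it pulls back along each permutation $\pi$ of the factors of $X^N$ and argues that the orientation sign of $\pi^{-1}(\sigma^N)$ cancels the sign of $\pi^*\omega^{\boxtimes N}$ (the sign $\eta$ in its proof). Summing over $\pi$ then produces the second copy of $\det V_{\mathcal{M}}(z)$.

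You instead expand both copies of $\det V_{\mathcal{M}}(z)$ at once and symmetrise purely combinatorially via $\tau=\rho\pi^{-1}$. This is the same reindexing the paper uses in its ``linear algebra interpretation'' lemma. Your route has a real advantage: you never permute the factors of $\sigma^N$. So you need no change of variables and no orientation bookkeeping, only Fubini for a product integrand $\prod_i h_i(z_i)$ against $\omega^{\boxtimes N}$ on $\sigma^N$ with its product orientation.

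For the same reason, delete your remark about orientation signs. It is not needed, and its justification is wrong: $\omega$ is a $d$-form integrated over the real $d$-chain $\sigma$, not a form of degree $2d$. For odd $d$, reordering factors genuinely does introduce signs, which is exactly why the paper has to cancel $\eta$.
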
 
\begin{proof} 
Let $\Sigma_N$ be the symmetric group on $N$ elements and let  $\varepsilon: \Sigma_N \rightarrow \{1,-1\}$ be the sign representation.  If $h\in \Or(X)$    we write $h(z_i)$ for $z_i^*(h) \in \Or(X^N)$.  Recall that $\mathcal{O}(V_f)$ embeds in $\Or(X)$. Then:
\begin{align*}
\det(Q^{\sigma}_N)  = &  \det \left(  \int_{\sigma}  \ff_{i} \ff_{j}  \,  \omega  \right)_{ij} \\
 = &   \sum_{\tau \in \Sigma_N} \varepsilon_{\tau} \int_{\sigma} \ff_{1} \ff_{\tau(1)} \omega   \, \times   \int_{\sigma} \ff_{2} \ff_{\tau(2)}  \,   \omega \times  \ldots  \times \int_{\sigma} \ff_{N} \ff_{\tau(N)}   \,\omega\\
 = & \sum_{\tau \in \Sigma_N} \varepsilon_{\tau} \int_{\sigma^N} \ff_1(z_1)  \ldots \ff_N(z_N)   \ff_{\tau(1)}(z_1)   \ldots \ff_{\tau(N)}(z_N)   \,  \omega^{\boxtimes N}\\
 = &  \int_{\sigma^N} \ff_1(z_1)  \ldots \ff_N(z_N)   \det\left(  \ff_j(z_i)     ) \right)   \,  \omega^{\boxtimes N}  \\  
= &     \int_{\sigma^{N}}     \left(  \prod_{i=1}^N   \ff_i(z_i )  \right) \det  V_{\mathcal{M}} (z)      \,     \omega^{\boxtimes N } \ .
\end{align*} 
Now consider the action of $\Sigma_N$ on $X^{N}$ by permuting factors. By functoriality of integration, we deduce that  for any $\pi \in \Sigma_N$, 
\[   \det(Q^{\sigma}_N)  =   \int_{\pi^{-1}( \sigma^{N})}   \left( \prod_{i=1}^N   \ff_i(z_{\pi(i)} )  \right)  \pi^*\left( \det  (V_{\mathcal{M}} (z) )\right)     \,    \pi^*( \omega^{\boxtimes N })\ . \]
The action of $\pi$ on the determinant is  $ \pi^*\left( \det  (V_{\mathcal{M}_N} (z) )\right)= \varepsilon(\pi)   \det  (V_{\mathcal{M}_N} (z) )$.  Furthermore, the action of $\pi$ 
  results in a 
 simultaneous  change in orientation of both the integration cycle $\sigma^{N}$ and the form $\omega^{\boxtimes N}$ which cancels out, i.e., there exists $\eta \in \{1,-1\}$ such that
 $\pi^* (\omega^{\boxtimes N} )= \eta \,\omega^{\boxtimes N}$ and $\pi^{-1} (\sigma^N) = \eta \, \sigma^N$. Therefore, for any such $\pi \in \Sigma_N$ one has the formula 
 \[    \det(Q^{\sigma}_N)  =   \int_{\sigma^{N}}      \left(\varepsilon(\pi)  \prod_{i=1}^N   \ff_i(z_{\pi(i)} )  \right)   \det  (V_{\mathcal{M}} (z) )    \,     \omega^{\boxtimes N } \ , \]
and  by summing over all elements $\pi \in \Sigma_N$ we conclude that 
\begin{align*} 
N! \det(Q^{\sigma}_N)  
 = \int_{\sigma^{N}}  \sum_{\pi \in \Sigma_N}     \left(\varepsilon(\pi)  \prod_{i=1}^N   \ff_i(z_{\pi(i)} )  \right)    \det  (V_{\mathcal{M}} (z) )      \,     \omega^{\boxtimes N }  =  \int_{\sigma^{N}} \left( \det  (V_{\mathcal{M}} (z) )\right)^2\,     \omega^{\boxtimes  N}  \ . 
\end{align*}

\end{proof}
\subsection{An upper bound for the determinant}
Finally, the determinant of the matrix $Q^{\sigma}_N$ may be bounded above in the limit as $N\rightarrow \infty$.

\begin{defn} Let $\mathcal{M} \subset \mathcal{O}(V_f)$ be a free $\Z$-module of rank $N$. Define 
\begin{equation}\label{tMDef}  t_{\mathcal{M}} (\sigma) = \sup_{z \in \sigma^N}  \left| \det V_{\mathcal{M}}(z) \right| \ . 
\end{equation}
It is well-defined since  changing  basis of $\mathcal{M}$ replaces  $V_{\mathcal{M}}(z) $  by 
$P \, V_{\mathcal{M}}(z)  \, P^T$  for some $P \in \mathrm{GL}_N(\Z)$, which has determinant in  $\{1,-1\}$. 
\end{defn} 

Under the assumptions $(P1)$, $(P2)$,  the right-hand side of \eqref{tMDef} is    the non-negative square root of the  Vandermonde determinant  defined in remark 
\ref{rem:VdMonlydependsonM}.

\begin{cor} \label{cor: detQNBoundTr}  
If  $N= \mathrm{rank}\, \mathcal{M}$, is larger than a constant which  depends only on $\int_{\sigma} \omega$, then
\begin{equation} \label{detQsigmaUpperBound}    |\det Q^{\sigma}_N|^{\frac{1}{N}}     < t_{\mathcal{M}} (\sigma)^{\frac{2}{N}}  \ .  
\end{equation} 
If, furthermore, the assumptions $(P1)$--$(P3)$ hold then $  0< (\det Q^{\sigma}_N)^{\frac{1}{N}}$. 
\end{cor}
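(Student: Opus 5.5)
The plan is to start from the integral formula of Proposition \ref{propdetQub} and bound the integrand pointwise by its supremum. Since $|\det V_{\mathcal{M}}(z)|\leq t_{\mathcal{M}}(\sigma)$ for every $z\in\sigma^N$ by definition \eqref{tMDef}, we get
\[ |\det Q^{\sigma}_N| \leq \frac{1}{N!}\int_{\sigma^N} \left(\det V_{\mathcal{M}}(z)\right)^2 \left|\omega^{\boxtimes N}\right| \leq \frac{1}{N!}\, t_{\mathcal{M}}(\sigma)^2 \left(\int_{\sigma}|\omega|\right)^N . \]
Here we use the fact that the total variation of the product form factorises as the $N$-th power of $C:=\int_\sigma|\omega|$. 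Under (P1) this is just $\int_\sigma\omega$, which is the constant referred to in the statement. In general it is the total mass of $|\omega|$ on $\sigma$. This mass is finite, because $\omega$ is regular and $\sigma$ is compact, or, in the case of poles on the boundary, after the blow-ups mentioned in \S\ref{sect: setup}.

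Taking $N$-th roots gives $|\det Q^{\sigma}_N|^{1/N}\leq (N!)^{-1/N}\,C\, t_{\mathcal{M}}(\sigma)^{2/N}$. By Stirling, or simply from $N!\geq (N/e)^N$, we have $(N!)^{-1/N}\leq e/N$. Hence $(N!)^{-1/N}C<1$ as soon as $N>eC$, and more crudely as soon as $N!>C^N$. This gives the strict inequality \eqref{detQsigmaUpperBound} for all $N$ beyond a threshold depending only on $C$. Strictness needs a little care. If $t_{\mathcal{M}}(\sigma)=0$, both sides vanish, so we should note that in that case $\det Q^{\sigma}_N=0$ by the formula. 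Alternatively, one can observe that under (P1)--(P3) linear independence forces $t_{\mathcal{M}}(\sigma)>0$. Otherwise the strict factor $(N!)^{-1/N}C<1$ yields strict inequality. In degenerate cases one reads the statement as $\leq$.

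For the second assertion we apply Lemma \ref{lem: posdefinite}. Under (P1)--(P3), and since $\ff_1,\ldots,\ff_N$ form a basis of a free module and are therefore linearly independent, $Q^{\sigma}_N$ is positive-definite, so $\det Q^{\sigma}_N>0$ and its $N$-th root is positive. The only mildly delicate point is making sense of $|\omega^{\boxtimes N}|$ and the finiteness of $C$ when $\omega$ is complex or has boundary poles. I expect no real obstacle beyond that; the essential input is the gain from the factor $1/N!$.
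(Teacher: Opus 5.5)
Your proof is correct and is essentially the paper's argument: bound the integrand in Proposition~\ref{propdetQub} by $t_{\mathcal{M}}(\sigma)^2$, factor the remaining integral as an $N$-th power, let the factor $1/N!$ beat it via Stirling, and invoke Lemma~\ref{lem: posdefinite} for positivity. Your use of $\int_\sigma|\omega|$ in place of $|\int_\sigma\omega|$ and your caveat that strictness fails when $t_{\mathcal{M}}(\sigma)=0$ are small but genuine refinements, since the paper's first inequality tacitly uses that $\omega$ does not change sign on $\sigma$ (i.e.\ (P1)) and it ignores the degenerate case.
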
 
\begin{proof} By equation \eqref{secondformulaDetQ} we have
\[    \left| \det Q^{\sigma}_N \right|    \leq   \frac{1}{N!} \left| \int_{\sigma^{N}}  \, \omega^{\boxtimes N} \right|   \sup_{z \in   \sigma^{N}}  \left| \det     V_{\mathcal{M}}( z)   \right|^2  \]
and hence 
\[   | \det Q^{\sigma}_N | \leq 
   \frac{1}{N!}  \left| \int_{\sigma} \omega \right|^{N}  \, \sup_{z \in \sigma^{N}}  \left| \det     V_{\mathcal{M}}( z)   \right|^2 \ . \]
     By Stirling's formula, $(N!)^{\frac{1}{N}} $   exceeds $ |\int_{\sigma} \omega | $ for all sufficiently large $N$, hence 
     \[    | \det Q^{\sigma}_N |^{\frac{1}{N}} <  \sup_{z \in  \sigma^{N}}  \left| \det     V_{\mathcal{M}}( z)   \right|^{\frac{2}{N}}  \ .  \]
    Finally, if $(P1)$--$(P3)$ hold,   $Q^{\sigma}_N$ is positive definite by  lemma \ref{lem: posdefinite}.
\end{proof}

\subsection{Linear algebra interpretation}  The following remark can be skipped, but may illuminate proposition \ref{propdetQub}. 
The identity \eqref{secondformulaDetQ} is a manifestation of a  completely general algebraic identity. 
 Let $V$ be a free $\Z$-module of rank  $N$ and embed 
\[\textstyle{\bigwedge^{\!\!N} (\mathrm{Sym}^{2} V) } \quad \subset \quad V^{\otimes 2N}\ . \]
Let $e_1,\ldots, e_N$ denote a $\Z$-basis of $V$.  The determinant $\det(V) = \bigwedge^{\!\!N} V$ is the free  $\Z$-module of rank $1$  generated by 
$e_1\wedge \ldots \wedge e_N$. We presume that the following lemma must be known.

\begin{lem} There is a  natural inclusion 
$ \rho: \mathrm{Sym}^2  \left(\det(V)\right)   \hookrightarrow    \textstyle{\bigwedge^{\!N}   (\mathrm{Sym}^2 V) } $
 which sends
\[ (e_1 \wedge \ldots \wedge e_N)^2  \mapsto N! \sum_{\alpha \in \Sigma_N} \epsilon(\sigma) (e_1. e_{\alpha(1)} ) \otimes \ldots \otimes   (e_N.e_{\alpha(N)}) \]
where the symmetric group $\Sigma_N$ acts on $(\mathrm{Sym}^2)^{\otimes N}$ by permuting factors.  
\end{lem}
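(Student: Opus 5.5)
We realise both sides inside the tensor power $V^{\otimes 2N}$ and check that $\rho$ is just the restriction of a linear map that is manifestly natural. We use the standard embeddings over $\Z$: $\mathrm{Sym}^2 V \subset V^{\otimes 2}$ via $e_a.e_b \mapsto e_a\otimes e_b + e_b\otimes e_a$ (symmetrisation, so everything stays integral), and $\bigwedge^N W\subset W^{\otimes N}$ via antisymmetrisation $w_1\wedge\ldots\wedge w_N\mapsto \sum_{\pi}\varepsilon(\pi) w_{\pi(1)}\otimes\ldots\otimes w_{\pi(N)}$. Similarly $\mathrm{Sym}^2(\det V)\subset (\bigwedge^N V)^{\otimes 2}\subset V^{\otimes 2N}$.

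First, we define $\rho$ abstractly as a shuffle. Consider the isomorphism $s: V^{\otimes N}\otimes V^{\otimes N}\to (V\otimes V)^{\otimes N}$ that interleaves factors, $(a_1\otimes\ldots\otimes a_N)\otimes(b_1\otimes\ldots\otimes b_N)\mapsto (a_1\otimes b_1)\otimes\ldots\otimes(a_N\otimes b_N)$. This map is $\mathrm{GL}(V)$-equivariant. It intertwines the diagonal action of $\Sigma_N$ (permuting both the $a$'s and the $b$'s simultaneously) with the permutation of the $N$ factors of $(V\otimes V)^{\otimes N}$. It also intertwines the swap of the two blocks with the simultaneous swap inside every factor $V\otimes V$.

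Next, we compute the image of the generator $\omega\otimes\omega$, where $\omega=\sum_\alpha\varepsilon(\alpha)e_{\alpha(1)}\otimes\ldots\otimes e_{\alpha(N)}$. Under $s$ it becomes
\[
\sum_{\alpha,\beta}\varepsilon(\alpha\beta)\,(e_{\alpha(1)}\otimes e_{\beta(1)})\otimes\ldots\otimes(e_{\alpha(N)}\otimes e_{\beta(N)}).
\]
Reindex by $\pi=\alpha$ and $\gamma=\beta\alpha^{-1}$, and reorder the factors by $\pi^{-1}$ (allowed by the equivariance above). The sum becomes
\[
\sum_\pi \varepsilon(\pi)\,\pi\cdot\Big(\sum_\gamma\varepsilon(\gamma)\,(e_1\otimes e_{\gamma(1)})\otimes\ldots\otimes(e_N\otimes e_{\gamma(N)})\Big),
\]
an antisymmetrisation over factor permutations. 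Replacing each $e_i\otimes e_{\gamma(i)}$ by its symmetrisation $e_i.e_{\gamma(i)}$ requires averaging over the block swap. The element $\omega\otimes\omega$ is invariant under the block swap, because it is a square, so symmetrising costs only a scalar.

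Then we track the constants. Antisymmetrising a pure tensor that is already antisymmetric in a suitable sense produces a factor $N!$, which accounts for the $N!$ in the statement, up to the normalisation of the inclusion $\bigwedge^N\subset\otimes^N$. Powers of $2$ from the symmetrisations are absorbed by the convention $e_a.e_b=e_a\otimes e_b+e_b\otimes e_a$. This shows that $\rho((e_1\wedge\ldots\wedge e_N)^2)$ equals $N!\sum_\alpha\varepsilon(\alpha)(e_1.e_{\alpha(1)})\otimes\ldots\otimes(e_N.e_{\alpha(N)})$. This element lies in $\bigwedge^N(\mathrm{Sym}^2V)$, since it is the antisymmetrisation of the displayed element, and it is nonzero: its coefficient on $(e_1.e_1)\otimes\ldots\otimes(e_N.e_N)$ is nonzero. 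Hence $\rho$ is injective on the rank one module $\mathrm{Sym}^2(\det V)$. Naturality follows because $s$ and all the (anti)symmetrisation maps are $\mathrm{GL}(V)$-equivariant and defined without choosing a basis.

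The main obstacle is the bookkeeping of normalisations, namely the factor $N!$ and the powers of $2$, which depend on the chosen conventions for the inclusions $\mathrm{Sym}\subset\otimes$ and $\bigwedge\subset\otimes$. We would fix these conventions first so that the claimed formula holds on the nose. Otherwise the statement is only correct up to a convention-dependent nonzero integer scalar, which does not affect injectivity or naturality.
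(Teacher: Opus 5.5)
Your construction of $\rho$ (antisymmetrise into $V^{\otimes N}\otimes V^{\otimes N}$, interleave, symmetrise each pair) and your substitution $\gamma=\beta\alpha^{-1}$ are exactly the paper's. However, the reindexing contains a sign error that reverses the conclusion.

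With $\beta=\gamma\pi$ one has $\varepsilon(\alpha)\varepsilon(\beta)=\varepsilon(\pi)\varepsilon(\gamma)\varepsilon(\pi)=\varepsilon(\gamma)$. So the image is $\sum_{\pi}\pi\cdot Y$ with $Y=\sum_{\gamma}\varepsilon(\gamma)(e_1.e_{\gamma(1)})\otimes\cdots\otimes(e_N.e_{\gamma(N)})$ and \emph{no} factor $\varepsilon(\pi)$. That is a symmetrisation, not an antisymmetrisation. You can also see this without coordinates: permuting the $N$ pairs by $\pi$ permutes both copies of $\omega$, so it multiplies $\omega\otimes\omega$ by $\varepsilon(\pi)^2=1$. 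For $N=2$ the image is $(e_1.e_1)\otimes(e_2.e_2)+(e_2.e_2)\otimes(e_1.e_1)-2\,(e_1.e_2)\otimes(e_1.e_2)$, which is symmetric.

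So the element lies in $\mathrm{Sym}^N(\mathrm{Sym}^2V)$, and no normalisation can move it into $\bigwedge^N(\mathrm{Sym}^2V)$. For $N\geq 2$, the line $\det(V)^{\otimes 2}=\mathbb{S}_{(2^N)}V$ (all rows of even length) is a summand of $\mathrm{Sym}^N(\mathrm{Sym}^2V)$ but not of $\bigwedge^N(\mathrm{Sym}^2V)$. Concretely, for $N=2$, writing $\mathrm{Sym}^2V$ multiplicatively, the only vector of weight $(2,2)$ in $\bigwedge^2(\mathrm{Sym}^2V)$ is $e_1^2\wedge e_2^2$. The raising operator $e_2\mapsto e_1$ sends it to $2\,e_1^2\wedge e_1e_2\neq 0$, so it is not invariant. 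The target in the statement therefore has to be read as $\mathrm{Sym}^N(\mathrm{Sym}^2V)$ (and $\epsilon(\sigma)$ as $\epsilon(\alpha)$). This is also what the Gram-determinant application after the lemma needs, since the commutative $N$-fold product $(\mathrm{Sym}^2V)^{\otimes N}\to R$ kills antisymmetric tensors.

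It follows that your constant bookkeeping cannot be repaired in the way you suggest. In the paper the $N!$ comes from the last step of the proof, where each reordered copy $\sigma\cdot Y$ is identified with $Y$. That identification is legitimate precisely in $\mathrm{Sym}^N(\mathrm{Sym}^2V)$ viewed as a quotient of $(\mathrm{Sym}^2V)^{\otimes N}$. In the tensor power itself $\sigma\cdot Y\neq Y$, and in $\bigwedge^N$ viewed as a quotient the $N!$ copies would cancel to $0$. Your appeal to ``antisymmetrising a tensor that is already antisymmetric in a suitable sense'' does not produce it.

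Separately, your block-swap step is incorrect. Invariance of $\omega\otimes\omega$ under exchanging the two blocks only gives invariance under swapping inside all $N$ pairs simultaneously, not inside each pair separately. So the interleaved element is not in $(\mathrm{Sym}^2V)^{\otimes N}$, and the factorwise symmetriser does not act on it by a scalar; this already fails for $N=2$. The step is unnecessary anyway: apply $a\otimes b\mapsto a.b$ termwise, as the paper does.

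With the sign corrected and the target taken to be $\mathrm{Sym}^N(\mathrm{Sym}^2V)$, your non-vanishing and naturality arguments go through unchanged.
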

Note that the module  $\mathrm{Sym}^2  \left(\det(V)\right)$ is the Schur functor $\mathbb{S}_{\lambda} V$ where $\lambda$ is the rectangular Young diagram with $2$ columns and $N$ rows.

\begin{proof} Via the inclusion $\det(V) \hookrightarrow V^{\otimes N}$ we may identify
\[  e_1 \wedge \ldots \wedge e_N  =   \sum_{\sigma \in \Sigma_N}  \epsilon(\sigma) \, e_{\sigma(1)} \otimes \ldots \otimes e_{\sigma(N)}  \ . \]
Thus the map $\rho$ satisfies
\begin{eqnarray} 
\rho \left( (e_1 \wedge \ldots \wedge e_N)^2 \right)  & = &   \sum_{\sigma,\tau  \in \Sigma_N}  \epsilon(\sigma ) \epsilon(\tau) \, (e_{\sigma(1)}.e_{\tau(1)} ) \otimes \ldots \otimes (e_{\sigma(N)} . e_{\tau(N)})  \nonumber\\ 
& = &   \sum_{\sigma \in \Sigma_N} \sum_{\alpha  \in \Sigma_N}  \epsilon(\alpha) \, (e_{\sigma(1)}.e_{\alpha \sigma(1)} ) \otimes \ldots \otimes (e_{\sigma(N)} . e_{\alpha \sigma(N)})\nonumber\\
&  =&  N!    \sum_{\alpha  \in \Sigma_N}  \epsilon(\alpha) \, (e_{1}.e_{\alpha (1)} ) \otimes \ldots \otimes (e_{N} . e_{\alpha(N)}) \nonumber
\end{eqnarray} 
where in the first line we view  $V^{\otimes 2 N} = V^{\otimes N}\otimes V^{\otimes N}$ with the natural action of $\Sigma_N \times \Sigma_N$ permuting factors in each component, the notation $a . b$ denotes the symmetrised tensor product  $a \otimes b + b \otimes a$, and in the  third line we write $\alpha =\tau \sigma^{-1}$  and use the fact that $\epsilon(\alpha) = \epsilon(\sigma)^{-1}  \epsilon(\tau) = \epsilon(\sigma)  \epsilon(\tau) $. \end{proof} 
If we have a symmetric product $\langle , \rangle : \mathrm{Sym}^2 V \rightarrow R$ to some commutative ring $R$, then the $N$-fold product defines a map
$(\mathrm{Sym}^2(V))^{\otimes N} \rightarrow R$. It may be applied to both sides to give the well-known formula for a Gram determinant:
\[  \langle e_1\wedge \ldots \wedge e_N,  e_1\wedge \ldots \wedge e_N\rangle=   N! \det \left( \langle e_i, e_j\rangle \right)_{1\leq i,j \leq N}  \ .\]


  \section{Vandermonde determinants associated to modules} \label{sect: VdMModules}

 Having  already considered    Vandermonde determinants on $X^N(\C)$,  we now need to  consider generalised Vandermonde determinants on the anciliary image variety $V^N_f(\C)$.    
  \subsection{Vandermonde matrices}  \label{subsect: GenVdM} Let $V= \mathrm{Spec} \, R$ be an affine algebraic variety of finite type  over $\Q$.

  \begin{defn}  \label{defn: VdMdet} Let $\mathcal{N}$ be a free $\Z$-submodule of $R$ of rank $N$.  Choose a $\Z$-basis $\m_1,\ldots, \m_N$.  Let $z=(z_1,\ldots z_N) \in  V^N(\C)$ and define the matrix
    \begin{equation} \label{eqn: VNdef} V_{\mathcal{N}}(z) =  \left( \m_j(z_i) \right)  \ . \end{equation}
  It is a generalised Vandermonde matrix, and  its determinant only depends on the  choice of generators $\m_i$ by  a sign. In particular, the function
  \[ \left| \det V_{\mathcal{N}}(z) \right|  \quad  \hbox{ on  } \quad  V^N(\C) \]
   is well-defined and symmetric: it equals $ \left|  \det \mathcal{N} \right|$, where $\det \mathcal{N} = \bigwedge^{\!\!N} \mathcal{N}$. 
   For any set $\tau \subset V(\C)$, denote its $N$-fold product  by $\tau^N \subset V^N(\C)$,  and  define
  \begin{equation} \label{eqn: tNdef}  t_{\mathcal{N}}( \tau ) = \sup_{z\in \tau^N}   \left|  \det   V_{\mathcal{N}}(z)      \right|  \  . \end{equation}
  It is a non-negative real number, or infinite. 
  \end{defn} 
  The number $ t_{\mathcal{N}}( \tau )\geq 0$  is well-defined and is related to notions of transfinite diameter on an algebraic variety.
  Clearly,  if $\tau\subset \tau'$ then $ t_{\mathcal{N}}( \tau )\leq  t_{\mathcal{N}}( \tau' )$. 
  The definition makes sense when $\mathcal{N}$ is not free, but in this case the determinant function  is identically zero, so we will usually assume that $\mathcal{N}$ is free.

  If $\mathcal{M} \subset \mathcal{N}$ is a free submodule of  rank equal to that of $\mathcal{N}$ then 
  \begin{equation}  \label{eqn: tscalestorsion}  t_{\mathcal{M}}( \tau ) =  t_{\mathcal{N}}( \tau ) \,   C  \end{equation} 
  where $C$ is the cardinality of the finite torsion module  $\mathcal{N}/\mathcal{M}$.

 \subsubsection{Morphisms}  \label{subsect: morphisms} Consider a morphism
 $f: V \rightarrow V'$
 between two affine varieties of finite type over $\Q$ where $V=\mathrm{Spec}\, R$, $V'=\mathrm{Spec}\, R'$.
 
   \begin{defn}
 A  free $\Z$-module $\mathcal{N} \subset R'$ 
    will be called $f$-\emph{admissible} if       \[ f^*: \mathcal{N} \To f^{*} \mathcal{N} \]
    is injective, where $f^* \mathcal{N}$, for want of a better notation,  denotes the $\Z$-module which is the image of $\mathcal{N}$ under the morphism $f^*: R' \rightarrow R$ of $\Z$-modules. 
    \end{defn}
    For a free  $\Z$-module $\mathcal{N}$ of rank $N$, we denote a choice of  basis by $\m_1,\ldots, \m_N$. 
    Thus $\mathcal{N}$ is $f$-admissible if and only if   $\ff_i =  \m_i \circ f$, for $1\leq i \leq N$, are linearly independent functions on  $V(\C)$. 
    When this holds,
  \begin{equation} \label{eqn: tmorphism} 
  t_{f^*\mathcal{N}} (\tau) =  t_{\mathcal{N}}( f \tau )\ .
  \end{equation} 
  
  
   \subsubsection{Main example: the anciliary image variety}
    Recall  that the anciliary image variety is 
  \[ V_f = \mathrm{Spec}\left( \Q[x_1,\ldots, x_r] / I \right) \]
and the natural inclusion $V_f \subset \mathbb{A}^r$ is denoted by $f^*$. We will consider  $f$-admissible free $\Z$-modules $\mathcal{N} \subset \Z[x_1,\ldots, x_r]$. Equation  
 \eqref{eqn: tmorphism}  trades the usually more complicated functions $\ff_i$ on the set $\sigma\subset X(\C)$, which in our applications is simply a hypercube, against simpler functions $\m_i$, which are typically monomials in the coordinates $x_1,\ldots, x_r$,  but on the region $f \sigma \subset V_f(\C)$.

  \subsection{Classical examples of Vandermonde determinants}  \label{sect: ClassicalVdM} Let  $V \cong \mathbb{A}^r$ and  $R=   \Z[x_1,\ldots, x_r]$. 
  
  \subsubsection{One-dimensional Vandermonde matrix} \label{sect: onedimVdM}  Let $r=1$, and let $\mathcal{N}_{n} \subset \Z[x_1]$ be the $\Z$-module consisting of polynomials in $x=x_1$ of degree $<n$. In the basis
  $\m_1=1, \m_2=x, \ldots, \m_n= x^{n-1}$,  the matrix   
     \[ V_{\mathcal{N}_n}(z_1,\ldots, z_n) =   \begin{pmatrix} 
     1 & z_1 & \ldots &  z_1^{n-1} \\
     1 & z_2 & \ldots &  z_2^{n-1} \\
     \vdots & \vdots  &  & \vdots   \\
     1 & z_n & \ldots &  z_n^{n-1} \\
     \end{pmatrix}   \  \]
  is the classical Vandermonde matrix, and hence 
  \[   t_{\mathcal{N}_n}( \tau ) = \sup_{z\in \tau^N}    \prod_{1\leq i < j \leq n} \left| z_i -z_j     \right|  \   \]
is related to the logarithmic capacity, or  transfinite diameter, of $\tau$.  There are several possible higher-dimensional generalisations. We consider two  special cases, the `rectangular' and `homogeneous' versions.  
  
  \subsubsection{`Rectangular' Vandermonde matrices}  \label{sect: rectVdM}  Let $n_1,\ldots, n_r\geq 0$ and let
  \[ \mathcal{N}^{\mathrm{rec}}_{(n_1,\ldots, n_r)} =  \bigoplus_{0 \leq i_1 < n_1,  \ldots, 0\leq  i_r < n_r}  \Z\,   x_1^{i_1} x_2^{i_2} \ldots x_r^{i_r} \]
  be the free module generated by all monomials of degree $< n_i$ in each variable $x_i$. 
    It is the tensor product 
    \begin{equation}
    \mathcal{N}^{\mathrm{rec}}_{(n_1,\ldots, n_r)}  \cong     \mathcal{N}_{n_1} \otimes_{\Z}    \mathcal{N}_{n_2} \otimes_{\Z}
    \ldots    \otimes_{\Z}     \mathcal{N}_{n_r}  
    \end{equation} 
    where $ \mathcal{N}_{n} \subset \Z[x]$ consists of polynomials of degree $<n$  as in \S\ref{sect: onedimVdM}.   The  matrix $V_{(n_1,\ldots, n_r)} (z) = V_{\mathcal{N}^{\mathrm{rec}}}(z)$ is called the \emph{rectangular multivariable Vandermonde matrix}.
  Its rank and the  determinantal degree are
  \begin{equation} \label{degree :rectangularcase}
  \rank \, \,\mathcal{N}^{\mathrm{rec}}_{(n_1,\ldots, n_r)} =  n_1\ldots n_r\qquad \hbox{ and } \qquad 
   \deg\,   \left(\det\,   \mathcal{N}^{\mathrm{rec}}\right) =  \frac{n_1 \ldots n_r}{2} (n_1+ \ldots + n_r- r)   \ . 
  \end{equation} 
\begin{example} Let $r=2$, and $n_1=3, n_2=2$. Then if we denote the coordinates on $\mathbb{A}^2$ by $(x,y)$, we may write $z_i = (x_i, y_i) \in \C^2$ and we have
\[ \mathcal{V}_{(3,2)}(z_1,\ldots, z_6) =  \begin{pmatrix}  1 & x_1 &x_1^2 &  y_1 & x_1y_1 & x_1^2 y_1  \\
1 & x_2 &x_2^2 &  y_2 & x_2y_2 & x_2^2 y_2\\
1 & x_3 &x_3^2 &  y_3 & x_3y_3 & x_3^2 y_3\\
1 & x_4 &x_4^2 &  y_4 & x_4y_4 & x_4^2 y_4\\
1 & x_5 &x_5^2 &  y_5 & x_5y_5 & x_5^2 y_5\\
1 & x_6 &x_6^2 &  y_6 & x_6y_6 & x_6^2 y_6
\end{pmatrix}
\]
Its determinant has degree $9$. 
Its  absolute value   is a function on $(\C^2)^6$.
\end{example}

  \subsubsection{`Homogeneous' Vandermonde matrices}  \label{sect: homVdM}  Let $n\geq 0$ and let
  \[ \mathcal{N}_n^{\mathrm{hom}} =  \bigoplus_{0\leq k < n} \bigoplus_{ i_1 +\ldots + i_r =k}  \Z\,   x_1^{i_1} x_2^{i_2} \ldots x_r^{i_r} \]
  be the direct sum of the free modules generated by all homogeneous monomials  in  $x_1,\ldots, x_r$ of degree $k$, for $0 \leq k <  n$. 
  When $r=1$ it coincides with  the rectangular case.
   It has rank  and determinantal degree 
  \begin{equation}
  \label{degree:hom}
        \rank  \, \, \mathcal{N}_n^{\mathrm{hom}}  = \binom{n+r-1}{r} \qquad \hbox{ and } \qquad   \deg  \left(\det  \mathcal{N}_n^{\mathrm{hom}} \right) =r  \binom{n+r-1}{r+1} \ .
    \end{equation} 
\begin{example} Let $r=2$ and let $\mathbb{A}^2$ have coordinates $(x,y)$ as above. Then 
\[ \mathcal{V}^{\mathrm{hom}}_{3}(z_1,\ldots, z_6) =  \begin{pmatrix}  1 & x_1 & y_1 &   x_1^2 &  x_1y_1 &  y^2_1  \\
1 & x_2 &  y_2 & x_2^2&  x_2y_2 &  y^2_2\\
1 & x_3 &  y_3 &  x_3^2& x_3y_3 &  y^2_3\\
1 & x_4 &  y_4 &  x_4^2&x_4y_4 &  y^2_4\\
1 & x_5 &  y_5 &x_5^2 &x_5y_5 &   y^2_5\\
1 & x_6 &  y_6 &x_6^2  & x_6y_6 &  y^2_6
\end{pmatrix}
\]
The determinant has degree $8$.
Its  absolute value  is a  function on $(\C^2)^6$.

\end{example}


  \subsection{Upper bounds for Vandermonde determinants}
  We can deduce  upper bounds for  $t_{\mathcal{M}}(\tau)$ relative to basic operations on $\Z$-modules by applying formulae for determinants.
 Here we consider direct sums (using a formula due to Laplace which essentially computes $\det ( \mathcal{N}_1 \oplus \mathcal{N}_2 )$) and tensor products (using a formula in Appendix \ref{sect: Appendix}, which  computes  $\det ( \mathcal{N}_1 \otimes \mathcal{N}_2 )$). 
  
  \subsubsection{Tensor products}
 
   Let $\mathcal{N}_1$ and $\mathcal{N}_2$ be  two   free $\Z$-submodules of $R$ of finite rank,  such that the image of their tensor product $\mathcal{N}_1 \otimes \mathcal{N}_2$  in $R$ via the map $R\otimes R \rightarrow R$ is also free.

    
  \begin{thm}   \label{thm: upperboundtau}
  Let  $\mathcal{N}_1$ and $\mathcal{N}_2$ be free $\Z$-modules as above, and let $n_i = \rank\, \mathcal{N}_i$, for $i=1, 2$. 
  Let $\tau \subset  V(\C)$. Then there is an upper bound 
  \[ 
  t_{\mathcal{N}_1 \otimes \mathcal{N}_2} (\tau ) \leq   \frac{(n_1n_2)!}{H_{n_1,n_2}} \, \,   t^{n_2}_{\mathcal{N}_1}(\tau)     t^{n_1}_{\mathcal{N}_2}(\tau)    \  , 
  \]
  where the number $H$ is defined in \eqref{eqn: Hmndef}. 
      \end{thm}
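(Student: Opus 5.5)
We prove the bound by expanding the Vandermonde determinant of the tensor product with a generalised Laplace expansion along a partition of the points, and then bounding each factor by the suprema $t_{\mathcal{N}_1}(\tau)$ and $t_{\mathcal{N}_2}(\tau)$.

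\emph{Setup.} Choose bases $\m_1,\ldots,\m_{n_1}$ of $\mathcal{N}_1$ and $\m'_1,\ldots,\m'_{n_2}$ of $\mathcal{N}_2$. Then the products $\m_a\m'_b$ form a basis of (the image of) $\mathcal{N}_1\otimes\mathcal{N}_2$. Fix $z=(z_1,\ldots,z_{n_1n_2})\in\tau^{n_1n_2}$. Order the columns of $V_{\mathcal{N}_1\otimes\mathcal{N}_2}(z)$ in $n_2$ blocks, the $b$-th block being $(\m_a\m'_b)_{1\le a\le n_1}$.

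\emph{Step 1: expansion of the determinant.} Apply the multi-block Laplace expansion along these column blocks. The determinant becomes a signed sum over ordered partitions of $\{1,\ldots,n_1n_2\}$ into $n_2$ subsets $S_1,\ldots,S_{n_2}$, each of size $n_1$, of products $\prod_b \det(\m_a(z_i)\m'_b(z_i))_{i\in S_b,\,a}$. Each factor equals $\prod_{i\in S_b}\m'_b(z_i)\cdot\det V_{\mathcal{N}_1}(z_{S_b})$. Collecting the factors $\prod_b\prod_{i\in S_b}\m'_b(z_i)$ over all assignments is exactly what the formula in Appendix~\ref{sect: Appendix} for $\det(\mathcal{N}_1\otimes\mathcal{N}_2)$ is meant to organise. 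I would aim to rewrite the sum as a sum over pairs of set partitions: one partition into $n_2$ blocks of size $n_1$, carrying $\prod_b\det V_{\mathcal{N}_1}(z_{S_b})$, and a transverse partition into $n_1$ blocks of size $n_2$, carrying $\prod_a \det V_{\mathcal{N}_2}(z_{T_a})$. This is the analogue of writing a determinant of a Kronecker-structured matrix through both tensor factors. The integer $H_{n_1,n_2}$ then appears as the number of times each such term is repeated, or equivalently as a normalising count. Plausibly it is a multinomial-type number related to the count of $n_1\times n_2$ Latin rectangles or of standard fillings; the bound $(n_1n_2)!/H_{n_1,n_2}$ should be the number of distinct terms.

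\emph{Step 2: term-by-term bound.} Bound each term in absolute value. Each term is a product of $n_2$ determinants of $\mathcal{N}_1$-Vandermonde matrices at $n_1$ points of $\tau$, giving at most $t_{\mathcal{N}_1}(\tau)^{n_2}$, times $n_1$ determinants of $\mathcal{N}_2$-Vandermonde matrices at $n_2$ points of $\tau$, giving at most $t_{\mathcal{N}_2}(\tau)^{n_1}$. The triangle inequality over the $(n_1n_2)!/H_{n_1,n_2}$ terms then gives the bound, and taking the supremum over $z\in\tau^{n_1n_2}$ finishes the proof.

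\emph{Main obstacle.} The hard part is Step 1: obtaining an exact identity in which every monomial factor is absorbed into Vandermonde determinants for \emph{both} $\mathcal{N}_1$ and $\mathcal{N}_2$. A naive Laplace expansion alone leaves stray products $\prod\m'_b(z_i)$, which are not controlled by $t_{\mathcal{N}_2}$. My first attempt is to antisymmetrise: average the single Laplace expansion over the action of $\Sigma_{n_1n_2}$ on the points, using that $\det V_{\mathcal{N}_1\otimes\mathcal{N}_2}$ is alternating. Then antisymmetrise the leftover $\m'$-monomials within the transverse blocks, as in the proof of Proposition~\ref{propdetQub}, where a product of entries was converted into a second determinant by summing over permutations with signs. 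The normalisation produced by this averaging is what I expect to define $H_{n_1,n_2}$ in \eqref{eqn: Hmndef}.
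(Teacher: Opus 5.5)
Your outline has the same shape as the paper's proof. You regard $V_{\mathcal{N}_1\otimes\mathcal{N}_2}(z)$ as the amalgam $A(z)\star B(z)$ with $A=(\m_a(z_i))$ and $B=(\m'_b(z_i))$, express its determinant through products of $n_2$ maximal minors of $A$ and $n_1$ maximal minors of $B$, and apply the triangle inequality. But the step you flag as the main obstacle is the entire content of the theorem, and you do not supply it: everything in Step 1 after the Laplace expansion is conjectural. The paper does not derive it either; it invokes Theorem \ref{thm: detAmalgam} of the appendix,
\[
\det(A\star B)=\frac{1}{H_{n_1,n_2}}\sum_{\sigma\in\Sigma_{n_1n_2}}\varepsilon(\sigma)\,A_\sigma B_\sigma .
\]
Here, for the filling $\sigma(1_{n^m})$ of the rectangle with $m=n_1$ rows and $n=n_2$ columns, $A_\sigma$ is the product of the minors of $A$ on its $n_2$ columns, and $B_\sigma$ is the product of the minors of $B$ on its $n_1$ rows (rows of each minor listed in tableau order). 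Given this, Step 2 is immediate. There are $(n_1n_2)!$ terms, each of modulus at most $t_{\mathcal{N}_1}(\tau)^{n_2}\,t_{\mathcal{N}_2}(\tau)^{n_1}$, multiplied by the prefactor $1/H_{n_1,n_2}$.

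Your proposed derivation would not produce this identity as sketched. The Heine--Andr\'eief trick of Proposition \ref{propdetQub} uses that the companion factor is alternating under all of $\Sigma_N$. Here $\prod_b\det V_{\mathcal{N}_1}(z_{S_b})$ is alternating only under permutations preserving each $S_b$. Antisymmetrising the stray factors $\prod_b\prod_{i\in S_b}\m'_b(z_i)$ along transverse blocks moves points between the $S_b$ and destroys the $A$-minors. Nothing in your sketch shows the outcome is a multiple of the original determinant, let alone by $H_{n_1,n_2}$.

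A route that does work runs in the opposite direction: expand each minor on the right-hand side. For fixed permutations inside the minors, the signed sum over fillings is the determinant of a matrix whose columns are columns $(\m_a\m'_b(z_i))_i$ of $A\star B$, selected by a map from cells to cells. This determinant vanishes unless that map is bijective. So the right-hand side equals $c\cdot\det(A\star B)$ for a purely combinatorial constant $c$. The genuine input is then $c=H_{m,n}=\prod_{i=0}^{n-1}(m+i)!/i!$, which is the hook-length product of the $m\times n$ rectangle (the constant in $e^2=H_{m,n}\,e$ for its Young symmetriser).

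This also corrects your reading of the constant. $(mn)!/H_{m,n}$ is the number of standard Young tableaux of that shape, not the number of distinct terms. Nor is $H_{m,n}$ a repetition count: the sum only identifies the $m!\,n!$ fillings related by permuting whole rows or whole columns, typically far fewer than $H_{m,n}$ (e.g.\ $12$ versus $144$ for $(m,n)=(3,2)$). To close the gap, replace Step 1 by an appeal to (or a proof of) Theorem \ref{thm: detAmalgam}.
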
 
  \begin{proof} Let $z=(z_1,\ldots, z_{n_1n_2})$ in $V^{n_1n_2}(\C)$. 
  For a suitable ordering on the generators, the Vandermonde matrix  of the tensor product is an amalgam: 
  \[ V_{\mathcal{N}_1 \otimes \mathcal{N}_2}(z)  =   A(z) \star   B(z) \qquad \hbox{ where } \qquad 
A(z) =  \left(  \m_j (z_i) \right) \quad \hbox{,} \quad B(z) =  \left(  \m'_j (z_i) \right)  \]
  are matrices with $n_1n_2$ rows and $n_1$ (resp. $n_2$) columns, 
  where $\m_1,\ldots, \m_{n_1}$ is a basis for $\mathcal{N}_1$ and $\m'_1,\ldots, \m'_{n_2}$ is a basis for $\mathcal{N}_2$. 
    It follows from theorem \ref{thm: detAmalgam} that
  \[   \left| \det  V_{\mathcal{N}_1 \otimes \mathcal{N}_2}(z) \right| \leq  \frac{(n_1n_2)!}{H_{n_1,n_2}}  \,   \max_{I,J}  \left( \left| \det(V_{\mathcal{N}_1}(z_i)_{i\in I}) \right|^{n_2}
   \left| \det(V_{\mathcal{N}_2}(z_j)_{j\in J}) \right|^{n_1}  \right)    \]
   where $I, J$ are subsets of $\{1,\ldots, mn\}$ with $|I|=n_1$ and $|J|=n_2$. It follows that if $z  \in \tau^{n_1n_2}$, i.e., all components $z_i \in \tau$,    we have
   \[    \left| \det  V_{\mathcal{N}_1 \otimes \mathcal{N}_2}(z) \right| \leq  \frac{(n_1n_2)!}{H_{n_1,n_2}}    t^{n_2}_{\mathcal{N}_1}(\tau)     t^{n_1}_{\mathcal{N}_2}(\tau)            \ .  \]
   By taking the supremum over all $z\in \tau^{n_1n_2}$, we obtain the stated inequality.
  \end{proof}
  
  Note that even if the generators $\m_i \m'_j$ satisfy linear relations (i.e., the image of $\mathcal{N}_1\otimes \mathcal{N}_2$ is not free) the  left-hand side vanishes and the bound is still valid.
  The inequality in the previous proposition can, after taking suitable limits, become an equality in certain situations: see \cite{BrVdM}[Theorem 6.1] for an example in the case when $\tau$ is a product. The examples of  \S\ref{sect: hyperbolaregion}  show that the further one deviates  from such a  product situation, the worse the upper bound can become.

  \subsubsection{Direct sums}
   A similar, but more elementary, formula holds for direct sums. 
   Let $\mathcal{N}_1, \mathcal{N}_2$ be free $\Z$-submodules of $R$ of finite rank such that the image of  $\mathcal{N}_1 \oplus \mathcal{N}_2$ in $R$ via the map $R\oplus R \rightarrow R$ is free.
   
   \begin{prop}   \label{prop: upperboundtaudirectsum}
  Let  $\mathcal{N}_1$ and $\mathcal{N}_2$ be as  above, and let $n_i = \rank\, \mathcal{N}_i$, for $i=1, 2$.  Let
   $\tau \subset  V(\C)$. Then there is an upper bound 
  \[ 
  t_{\mathcal{N}_1 \oplus \mathcal{N}_2} (\tau ) \leq    \binom{n_1+n_2}{n_1}  \, \,   t_{\mathcal{N}_1}(\tau)     t_{\mathcal{N}_2}(\tau)    \  .  
  \]
    \end{prop}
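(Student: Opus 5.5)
The approach is generalised Laplace expansion of the Vandermonde determinant along the block of columns coming from $\mathcal{N}_1$. Fix $z=(z_1,\ldots,z_{n_1+n_2})\in \tau^{n_1+n_2}$, a basis $\m_1,\ldots,\m_{n_1}$ of $\mathcal{N}_1$ and a basis $\m'_1,\ldots,\m'_{n_2}$ of $\mathcal{N}_2$. Since the image of $\mathcal{N}_1\oplus\mathcal{N}_2$ in $R$ is free of rank $n_1+n_2$ with basis given by the concatenation of these two bases, the matrix $V_{\mathcal{N}_1\oplus\mathcal{N}_2}(z)$ can be written, after ordering the columns suitably, as the block matrix $(A(z)\mid B(z))$. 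Here $A(z)=(\m_j(z_i))$ is of size $(n_1+n_2)\times n_1$ and $B(z)=(\m'_j(z_i))$ is of size $(n_1+n_2)\times n_2$. The absolute value of its determinant is independent of these choices, as noted after Definition \ref{defn: VdMdet}.

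Next, apply Laplace's expansion along the first $n_1$ columns:
\[ \det V_{\mathcal{N}_1\oplus\mathcal{N}_2}(z)=\sum_{I}\pm \det A(z)_{I}\,\det B(z)_{I^c}, \]
where $I$ ranges over the $\binom{n_1+n_2}{n_1}$ subsets of $\{1,\ldots,n_1+n_2\}$ of size $n_1$. The minor $A(z)_I$ consists of the rows of $A$ indexed by $I$, and $B(z)_{I^c}$ consists of the rows of $B$ indexed by the complement of $I$. The key observation is that $A(z)_I$ is exactly $V_{\mathcal{N}_1}((z_i)_{i\in I})$, and similarly $B(z)_{I^c}=V_{\mathcal{N}_2}((z_j)_{j\in I^c})$. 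Because every $z_i$ lies in $\tau$, the tuples $(z_i)_{i\in I}$ lie in $\tau^{n_1}$ and the tuples $(z_j)_{j\in I^c}$ lie in $\tau^{n_2}$. Hence each term is bounded in absolute value by $t_{\mathcal{N}_1}(\tau)\,t_{\mathcal{N}_2}(\tau)$, by definition \eqref{eqn: tNdef}.

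Finally, the triangle inequality bounds $|\det V_{\mathcal{N}_1\oplus\mathcal{N}_2}(z)|$ by $\binom{n_1+n_2}{n_1}\,t_{\mathcal{N}_1}(\tau)\,t_{\mathcal{N}_2}(\tau)$ for every $z\in\tau^{n_1+n_2}$. Taking the supremum over $z$ gives the stated inequality. If either of $t_{\mathcal{N}_1}(\tau)$ or $t_{\mathcal{N}_2}(\tau)$ is infinite the bound is trivial, and if one of them vanishes then every term of the expansion vanishes, so the bound still holds.

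There is no real obstacle here. The only point needing care is the identification of the Laplace minors with the Vandermonde matrices of $\mathcal{N}_1$ and $\mathcal{N}_2$ evaluated at sub-tuples of $z$. This is immediate because the rows of $V$ are indexed by points and the columns by basis functions.
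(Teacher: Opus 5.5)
Your proposal is correct and follows essentially the same route as the paper's proof. Both write the determinant via Laplace's expansion over the $\binom{n_1+n_2}{n_1}$ complementary minors, identify those minors with $V_{\mathcal{N}_1}(z_I)$ and $V_{\mathcal{N}_2}(z_J)$, bound each term by $t_{\mathcal{N}_1}(\tau)\,t_{\mathcal{N}_2}(\tau)$, and take the supremum over $z\in\tau^{n_1+n_2}$.
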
 
  
  \begin{proof} Let $z =(z_i)_{1\leq i \leq n_1+n_2} \in V(\C)^{n_1+n_2}$.
  Expanding  the determinant of $V_{\mathcal{N}_1 \oplus \mathcal{N}_2}(z)$  via Laplace's formula  as  a sum over  complementary minors gives
     \[ \det  V_{\mathcal{N}_1 \oplus \mathcal{N}_2} (z)  = \sum_{I,J} \varepsilon_{I,J}  \det V_{\mathcal{N}_1} (z_I)  \det V_{\mathcal{N}_2} (z_J)     
    \] 
    where the sum is over all subsets $I \subset \{1,\ldots, n_1+n_2\}$ such that $|I|=n_1$,  and $J$ is its complement.   The coefficients satisfy $\varepsilon_{I,J} \in \{1,-1\}$.  
    Now take the absolute value and the supremum over  $z\in \tau^{n_1+n_2}$.  \end{proof}

  \section{Supremal transfinite diameter} \label{sect: Suptransfinite}

  We define a notion of transfinite diameter in which the limit is replaced by a limit supremum, and work in the setting of  affine schemes $V$.  The limits will be taken with respect to  certain exponents $e$, which are introduced to allow additional flexibility. In this section we state and prove some properties of the supremal diameter with respect to natural operations on modules.
  
  In brief, the supremal transfinite diameter provides a pairing between subsets $\tau \subset V(\C)$ and free modules $\mathcal{M}\subset \Or(V)$ with some additional data. It produces a number denoted $\ts_{\mathcal{M}}(\tau) \in \R_{\geq 0} \cup \{\infty\}$. 
 
  \subsection{Definition} \label{sect: setupexponentfunctions}
  Throughout, let  $V = \mathrm{Spec} \, R$ be as in  \S\ref{subsect: GenVdM}. Consider  a free $\Z$-module 
  $\mathcal{M} \subset R$ which has a filtration by  (necessarily free) $\Z$-submodules  of finite rank
  \[  \mathcal{M}_n \subset \mathcal{M} \qquad \hbox{ for } n \geq 1 \ ,  \]
  such that $\mathcal{M}_n \subset \mathcal{M}_{n+1}$ and $\mathcal{M}= \varinjlim \mathcal{M}_n.$    Consider a sequence of  \emph{exponents} 
  \[  e_n^{\mathcal{M}} \in \mathbb{N} \ . \]
  The exponents will usually  grow a little faster than the rank: 
  \begin{equation}  \label{exponentgrowthcondition} 
  \lim_{n\rightarrow \infty}  \frac{ e_n^{\mathcal{M}}}{r_n \log r_n } = \infty\  \qquad \hbox{ where } r_n= \rank \, \mathcal{M}_n \ .
  \end{equation} 
  \begin{defn} Given a filtered $\Z$-module $\mathcal{M}$, and exponents $e^{\mathcal{M}}$, we define the supremal transfinite diameter of a set $\tau \subset V(\C)$  to be
  \[  \ts_{\mathcal{M},e} (\tau)=  \limsup_{n\rightarrow \infty}   \left(t_{\mathcal{M}_n}(\tau) \right)^{\frac{1}{e^{\mathcal{M}}_n}} \ ,\]
  where $t_{\mathcal{M}_n}(\tau)$ is the supremum of the Vandermonde determinant $|\det \mathcal{M}_n|$ on $\tau^{\rank \mathcal{M}_n}$  (definition \ref{defn: VdMdet}).
  \end{defn}
  Note that the limit supremum $\ts_{\mathcal{M}} (\tau)$ may be infinite, but always exists. It follows immediately from the definition that if $\tau \subset \tau'$ then
  \begin{equation}  \label{eqn: Subsetinequality}   \ts_{\mathcal{M},e} (\tau) \leq \ts_{\mathcal{M},e} (\tau') \ . 
  \end{equation}
  
 \begin{rem} \label{rem: noe} When the exponent function is clear from the context, we shall drop it  and simply  write $\ts_{\mathcal{M}}$. Note that if $e'_n= \alpha_n e_n$, where the $\alpha_n$ tend to a limit $\alpha$ as $n\rightarrow \infty$, one has  the identity:
 \[  \ts_{\mathcal{M},e'} (\tau) = \ts_{\mathcal{M},e} (\tau)^{\frac{1}{\alpha}}\ . \]
 \end{rem}

  \begin{ex} \label{ex: Supstandardexamples} Consider the  case $V= \mathbb{A}^r$. Denote its coordinate ring by $\mathcal{O}(V) = \Z[x_1,\ldots, x_r]$. 
  \begin{enumerate}
  \item
  If $r=1$ we may take $\mathcal{M}_n \subset \mathcal{M}=\Z[x]$ to be the subspace of polynomials in $x$ of degree $<n$, and $e^{\mathcal{M}}_n = \binom{n}{2}$. Then, as in  \S\ref{sect: ClassicalVdM}, 
  \[  \ts_{\mathcal{M}} (\tau) =   \limsup_{n\rightarrow \infty}  \left(\sup_{z_i \in \tau}  \prod_{1\leq i<j\leq n} | z_i -z_j| \right)^{\frac{2}{n(n-1)}} \    \]
  which equals  the  transfinite diameter of  $\tau \subset \C$ whenever it exists (e.g., if $\tau$ is compact).  \\
  
  \item For general $r$, let  $0 <  a_1,\ldots, a_r \in \mathbb{N}$ and let $\mathcal{M}=\Z[x_1,\ldots, x_r]$ equipped with the filtration by submodules $\mathcal{M}_n = \mathcal{N}^{\rec}_{(a_1n, \ldots, a_rn)}$ with exponent function
  \begin{eqnarray} \label{exponent:rectangularcase}
 e^{\mathcal{M}}_{n}  &=  &  n^{r+1}  \frac{a_1 \ldots a_r}{2} (  a_1+ \ldots + a_r- \frac{r}{n})    \\
 & \sim &   \frac{n^{r+1}}{2}    a_1\ldots a_r (a_1+\ldots + a_r)   \qquad \hbox{ as } n \rightarrow \infty \nonumber
\end{eqnarray} 
 as in  \S\ref{sect: rectVdM}. 
  Then we define for any $\tau \subset \C^r$ 
  \[ \ts_{(a_1,\ldots, a_r)}^{\ \rec} (\tau) =    \limsup_{n\rightarrow \infty}   \left(t_{\mathcal{N}^{\rec}_{(a_1 n,\ldots, a_rn)}}(\tau) \right)^{\frac{1}{e^{\mathcal{M}}_n}}  \ . \]
  It equals  the `rectangular' transfinite diameter of $\tau$ if the latter  exists, which holds for example when $\tau$ is compact. This is   a special case of \cite{BBL2018}[Theorem 5.1, Rem. 5.2]. \\

   \item  Let $n\geq 0$ and let $\mathcal{M}=\Z[x_1,\ldots, x_r]$  with the filtration by submodules $\mathcal{M}_n =  \mathcal{N}_n^{\mathrm{hom}}$ as defined in 
   \S\ref{sect: homVdM}. The exponent function is 
    \begin{equation}
  \label{exponent:hom}
  e^{\mathrm{hom}}_{n} =r\, \binom{n+r-1}{r+1}  \sim  \left(\frac{r}{r+1} \right) \frac{n^{r+1}}{r!}  \ .
    \end{equation} 
Then we define for any $\tau \subset \C^r$ 
  \[ \ts_{n}^{\hom} (\tau) =    \limsup_{n\rightarrow \infty}   \left(t_{\mathcal{N}^{\hom}_{ n}}(\tau) \right)^{\frac{1}{e^{\mathcal{M}}_n}}  \ . \]
  It is   the usual `homogeneous'  transfinite diameter of $\tau$  when it  exists.   Similarly, the limit exists when $\tau$ is compact as a consequence of  \cite{BBL2018}[Theorem 5.1, Rem. 5.2].
   \\
    \end{enumerate}
  These particular examples are special cases of what is  called  the $C$-transfinite diameter  in \cite{LevenbergWielonsky}, which in turn is a special case of the more general definition  for an arbitrary module given above.
   \end{ex}

   \begin{rem} 
   When the degree   of the Vandermonde determinant makes sense, we define  the exponent function to be  $\det {\mathcal{M}_n} \subset R^{\rank  \,  \mathcal{M}_n}$, as in the examples above.
  More generally, let $\Gm \times \mathbb{A}^r \rightarrow \mathbb{A}^r$ be  the action which on complex points scales  coordinates by $\lambda \in  \Gm(\C) = \C^{\times}$. Then  $(1)$-$(3)$ satisfy
  \begin{equation} \label{eqn: scalinglaw}
  \ts_{\mathcal{M}}(\lambda \tau) =  \lambda \, \ts_{\mathcal{M}}(\tau)\ \quad \hbox{ for all } \lambda \in \C^{\times} \ .  
  \end{equation} 
  In general, whenever $V$ admits a $\Gm$-action of this sort and  its affine ring is graded with  non-negative degrees and is of finite rank in each graded degree,  it is advisable to choose the   exponent function to be 
  \[ e_n^{\mathcal{M}} = \deg  \left(\det \mathcal{M}_n\right) = \sum_{i=1}^{\rank \mathcal{M}_n} \deg(\m_i) \]
  where the $\m_i$ are any choice of independent  homogenous generators for $\mathcal{M}_n$. 
 It is compatible with the grading and guarantees  that  a  scaling law  \eqref{eqn: scalinglaw} holds.
 \end{rem}
 
 \subsubsection{Functoriality and torsion}   The supremal diameter is functorial in the following sense.
Let $f: V \rightarrow V' = \mathrm{Spec}(R')$ be a morphism as in  \S\ref{subsect: morphisms}, and consider a free $\Z$-module 
 $\mathcal{N} \subset R'$ which is $f$-admissible.  Suppose that $e^{\mathcal{N}}$ is an exponent function and let  $e^{f^*\mathcal{N}}_n=  e^{\mathcal{N}}$. Then  by \eqref{eqn: tmorphism}
  \begin{equation} \label{eqn: functorial} 
  \ts_{f^*\mathcal{N}} (\tau) =  \ts_{\mathcal{N}}( f \tau )\ .
  \end{equation}

The supremal diameter is insensitive to `small' modifications of the module $\mathcal{M}$. For example, 
 suppose that $\mathcal{M}' \subset \mathcal{M}$ are as above  and $\mathcal{M}'_n \subset \mathcal{M}_n$ is free of equal rank. Let $C_n$ denote the cardinality of the torsion module $\mathcal{M}_n/\mathcal{M}'_n$. If the corresponding exponent functions are equal: $e^{\mathcal{M}'}_n = e^{\mathcal{M}}_n$,  then  by \eqref{eqn: tscalestorsion}
 \[   \ts_{\mathcal{M}} (\tau) =   \ts_{\mathcal{M}'} (\tau)  \quad \hbox{ whenever } \quad 
 \lim_{n\rightarrow \infty}  C_n^{\frac{1}{e^{\mathcal{M}}_n}}=1  \ .\] 
 The latter condition on the limit   holds unless $C_n$ grows  rapidly as a function of the rank of $\mathcal{M}_n$. 
 Consequently, the $\Z$-module structure on $\mathcal{M}$ does not usually play an important role in the theory. 
 
 \begin{example}
 For example, if $\mathcal{M}$ is as in $(1)$ above, and \
 \[ \mathcal{M}'_n= \Z[1, 2x, 3  x^2, \ldots, n x^{n-1}]\ , \]
 then $C_n= n!$ and $\ts_{\mathcal{M'}}(\tau) = \ts_{\mathcal{M}}(\tau)$. But if, for example, 
 \[ \mathcal{M}'_n= \Z[1, 2x,  4 x^2, \ldots,  2^{n-1} x^{n-1}] \ , \]
 then  $C_n \sim 2^{n^2/2}$ and so $\ts_{\mathcal{M}'}(\tau) =    \ts_{\mathcal{M}}(2\tau)  =2  \,\ts_{\mathcal{M}}(\tau)  $ by \eqref{eqn: functorial} where $f$ is the map $x\mapsto 2 x$.
\end{example}

  \subsection{Summary of known results}
    For our applications, we are interested in upper bounds for the supremal diameter. 
    We briefly list some known results on the transfinite diameter.
    
    \subsubsection{$1$-dimensional case} This situation is classical and well-studied. One knows that 
    \begin{equation} \label{eqn:supinterval}   \ts^{\hom}([a,b])= t([a,b]) =   \frac{(b-a)}{4}\end{equation}
    whenever  $a<b$.  The module and exponent function are as in \S\ref{sect: ClassicalVdM}.
  
  


  \subsubsection{Rectangular transfinite diameter}
   Unfortunately, there are few results which establish the precise value of the transfinite diameter in higher dimensions.  For the rectangular case \S\ref{sect: rectVdM}, very little seems to be known. We show below that the rectangular diameter $\ts^{\rec}_{(a_1,\ldots,a_r)}$  is multiplicative. In particular  
   \[ \ts^{\rec}_{(1,\ldots,1)} (\tau_1 \times \tau_2) =    \left(\ts^{\rec}_{(1,\ldots,1)} (\tau_1 )\right)^{\frac{r}{r+s}}  \left(  \ts^{\rec}_{(1,\ldots,1)} ( \tau_2) \right)^{\frac{s}{r+s}}   \]
   for any $\tau_1 \subset \C^r$ and $\tau_2 \subset \C^s$.
   In the case $r=s=1$ we recover the formula  \cite{LevenbergWielonsky}[Theorem 5.1].    
   Since, in one dimension, the rectangular and homogeneous transfinite diameters coincide,  we may use \eqref{eqn:supinterval} to deduce the rectangular transfinite diameter of a rectangular box:
   \begin{equation} \label{eqn: trecsquarebox}  \ts^{\rec}_{(1,1)} (  [a,b]\times [c,d]) =  \frac{\sqrt{(b-a)(d-c)}}{4} \qquad \hbox{ where }  a<b \ , \ c<d \ .
   \end{equation}
   
  \subsubsection{Homogeneous transfinite diameter} This is the standard notion of transfinite diameter. We shall denote it by $\tr$.
  The existence of the limits for compact sets was proven by \cite{Zaharjuta}, in which case $\tr = \ts^{\hom}$. 
The  multiplicativity  of the homogeneous transfinite diameter was established in \cite{SchifferSiciak}, \cite[Theorem 1]{BloomCalvi}. Let $\tau_1 \subset \C^{m}$ and $\tau_2 \subset \C^{n}$ be compact sets. Then
  \begin{equation} \label{homogeneoustransfinitemultiplicative}
  \tr(\tau_1 \times \tau_2)  =  \tr(\tau_1)^{\frac{m}{m+n}}  \tr(\tau_2)^{\frac{n}{m+n}}   \ . 
  \end{equation} 
 Thus $\ts^{\hom}(\tau_1 \times \tau_2) =    \ts^{\hom}(\tau_1)^{\frac{m}{m+n}} \ts^{\hom}(\tau_2)^{\frac{n}{m+n}} $.   It may be possible to derive this  directly from the formula for the determinant of a matrix amalgam (Appendix \S\ref{sect: Appendix}).  In particular, the homogeneous supremal diameter for a rectangular box coincides with the rectangular version, and is given by \eqref{eqn: trecsquarebox}.
  
  Let $\tau \subset \C^n$. Then for any $P \in \mathrm{GL}_n(\C)$ one has the formula
  \begin{equation} \label{eqn: thomsimilarity}   \tr(P  \tau) =  |\det(P)|^{\frac{1}{n}}\,  \tr(\tau)\ ,  \end{equation} 
  which can  be proven from the definition \S\ref{sect: homVdM} using standard  properties of the determinant and the scaling law \eqref{eqn: scalinglaw}.
  Equation \eqref{eqn: thomsimilarity} may be generalised to certain homogeneous polynomial transformations. Indeed,  \cite{BloomCalvi, DeMarcoRumely} show that
  for any regular polynomial map $F: \C^n \rightarrow \C^n$ of degree $d$ one has
  \[  \tr (F^{-1} \sigma) = |\mathrm{Res} F_h|^{-\frac{1}{nd^n}}  \tr(\sigma)^{\frac{1}{d}}\]
   where $\mathrm{Res}$ denotes a certain resultant associated to  the homogeneous part $F_h$ of  $F$.

  In addition, the   following specific results in two dimensions will be useful:
  \begin{enumerate}
  \item (Unit ball). It is known by Bos  \cite{Bos}  that if $B= \{(x,y) \in \R^2: x^2+y^2 \leq 1\}$ denotes the unit ball in $\R^2$,   the transfinite diameter equals:
\[   \tr(B) =  \frac{1}{\sqrt{2e}}\ .  \]
\item (Unit triangle).
By applying the map $(x,y) \mapsto (x^2, y^2)$, one  deduces  \cite[p. 302]{BloomCalvi} that   the transfinite diameter of a triangle $T\subset \R^2$ with vertices $(0,0)$, $(1,0)$ and $(0,1)$ is 
\[ \tr(T) = \frac{1}{2e} \sim 0.18393\ldots \]

\item (General triangle). 
By applying an element  \eqref{eqn: thomsimilarity} in $\mathrm{GL}_2(\R)$,  we deduce  that for any triangle $T\subset \R^2$, its transfinite diameter equals
\begin{equation}  \label{eqn: transfinitetriangle}  \tr(T) =  \frac{\mathrm{vol}(T)^{\frac{1}{2}} }{ e \, \sqrt{2} }\ .
\end{equation} 
This  will be used to estimate the transfinite diameter associated to $\zeta(2)$.
  \end{enumerate} 
  The homogeneous transfinite diameter of the ball and simplex in $\R^n$ were computed in \cite{BloomBosLevenberg}.  
    
  \subsection{Supremal  diameter for tensor products}

  Let $\mathcal{M}$, $\mathcal{N}$ be two filtered   $\Z$-submodules of $R$ and suppose that $\mathcal{M} \otimes \mathcal{N} \subset R$ is a free $\Z$-module equipped with the filtration $(\mathcal{M} \otimes \mathcal{N} )_n =  \mathcal{M}_n \otimes \mathcal{N}_n$.

  \begin{thm}  \label{thm: tsuptensorproductUB}  Let $\mathcal{M}$, $\mathcal{N}$, $\mathcal{M} \otimes\mathcal{N}$   be as above with exponent functions  $e^\mathcal{M}$, $e^\mathcal{N}$, $e^{\mathcal{M}\otimes\mathcal{N}}$. Suppose that  $e^{\mathcal{M}\otimes \mathcal{N}}$ satisfies \eqref{exponentgrowthcondition} and 
      \[   \lim_{n \rightarrow \infty}  \frac{e^{\mathcal{M}}_n \, \rank\, \mathcal{N}_n}{ e^{\mathcal{M}\otimes\mathcal{N}}_n}  =\alpha \quad \hbox{ and } \quad    \lim_{n \rightarrow \infty}  \frac{e^{\mathcal{N}}_n \, \rank\, \mathcal{M}_n}{ e^{\mathcal{M}\otimes\mathcal{N}}_n}= \beta \ .   \]
   Then we have the upper bound
    \begin{equation} 
     \label{eqn: tsuptensorproductUB} 
      \ts_{\mathcal{M} \otimes \mathcal{N}} (\tau )  \leq     \, \,   \left(\ts_{\mathcal{M}}(\tau) \right)^{\alpha}   \left(\ts_{\mathcal{N}}(\tau) \right)^{\beta}     \  .  
  \end{equation}
      \end{thm}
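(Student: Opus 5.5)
The plan is to apply Theorem \ref{thm: upperboundtau} at each finite level $n$ and then pass to the limit supremum. Write $r_n = \rank\,\mathcal{M}_n$, $s_n = \rank\,\mathcal{N}_n$ and $E_n = e^{\mathcal{M}\otimes\mathcal{N}}_n$. By the definition of the filtration, $(\mathcal{M}\otimes\mathcal{N})_n = \mathcal{M}_n\otimes\mathcal{N}_n$ has rank $r_n s_n$. Theorem \ref{thm: upperboundtau} then gives
\[ t_{\mathcal{M}_n\otimes\mathcal{N}_n}(\tau) \leq \frac{(r_ns_n)!}{H_{r_n,s_n}}\, t_{\mathcal{M}_n}(\tau)^{s_n}\, t_{\mathcal{N}_n}(\tau)^{r_n}. \]
Raising both sides to the power $1/E_n$ yields
\[ t_{\mathcal{M}_n\otimes\mathcal{N}_n}(\tau)^{1/E_n} \leq \left(\frac{(r_ns_n)!}{H_{r_n,s_n}}\right)^{1/E_n} \left(t_{\mathcal{M}_n}(\tau)^{1/e^{\mathcal{M}}_n}\right)^{e^{\mathcal{M}}_n s_n/E_n} \left(t_{\mathcal{N}_n}(\tau)^{1/e^{\mathcal{N}}_n}\right)^{e^{\mathcal{N}}_n r_n/E_n}. \]
By hypothesis the exponents $e^{\mathcal{M}}_n s_n/E_n$ and $e^{\mathcal{N}}_n r_n/E_n$ tend to $\alpha$ and $\beta$.

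The combinatorial prefactor is handled next. The number $H_{m,n}$ of \eqref{eqn: Hmndef} is a positive integer (it arises as a normalising count in the amalgam determinant formula), so $H\geq 1$ and the prefactor is at most $(r_ns_n)!$. By Stirling, $\log (r_ns_n)! \leq r_ns_n\log(r_ns_n)$. The growth condition \eqref{exponentgrowthcondition} applied to $e^{\mathcal{M}\otimes\mathcal{N}}$ says that $E_n/(r_ns_n\log(r_ns_n))\to\infty$. Hence $((r_ns_n)!)^{1/E_n}\to 1$. If $H$ were instead a fraction, I would check from its definition that $\log H_{r_n,s_n}$ is also $O(r_ns_n\log(r_ns_n))$, which gives the same conclusion.

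Finally, take $\limsup$ on both sides. Write $a_n = t_{\mathcal{M}_n}(\tau)^{1/e^{\mathcal{M}}_n}$ with $\limsup a_n = A = \ts_{\mathcal{M}}(\tau)$, and similarly $b_n$ with $\limsup b_n = B$. I use the elementary fact that $\limsup a_n^{\alpha_n}b_n^{\beta_n}\leq A^{\alpha}B^{\beta}$ when $\alpha_n\to\alpha\geq0$ and $\beta_n\to\beta\geq 0$. For any $\epsilon>0$, eventually $a_n\leq A+\epsilon$, and $(A+\epsilon)^{\alpha_n}\to (A+\epsilon)^\alpha$; the same holds for $b_n$. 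Then let $\epsilon\to0$.

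The main obstacle is the boundary cases of this limit step. If $A=0$ and $\alpha=0$, the bound on $a_n^{\alpha_n}$ is lost when $a_n<1$ and $\alpha_n\to0$. I would adopt the conventions $0^0=1$ and $\infty^{\alpha}=\infty$, treat infinite values as trivial, and handle $A<1$ by using $a_n^{\alpha_n}\leq \max(1,A+\epsilon)^{\alpha_n}$ when $\alpha=0$. These cases are degenerate, and the generic case $0<A,B<\infty$ goes through directly as above.
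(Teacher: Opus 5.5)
Your proof is correct and follows the paper's argument essentially step for step: apply Theorem \ref{thm: upperboundtau} at level $n$, use $H_{r_n,s_n}\geq 1$ together with $(r_ns_n)!\leq (r_ns_n)^{r_ns_n}$ so that the growth condition \eqref{exponentgrowthcondition} sends the prefactor to $1$, and then pass to the limit supremum using $\alpha_n\to\alpha$ and $\beta_n\to\beta$. (Your concern about the case $A=0$, $\alpha=0$ is unnecessary, since for each fixed $\epsilon>0$ you already have $(A+\epsilon)^{\alpha_n}\to 1=A^0$.)
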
 
    
  \begin{proof} For brevity write $r_n=\rank \, \mathcal{M}_n$, $s_n=  \rank \, \mathcal{N}_n$. Let $n$ be sufficiently large that $r_n,s_n>0$. 
  By Theorem \ref{thm: upperboundtau},
   \[ 
   t_{(\mathcal{M} \otimes \mathcal{N})_n} (\tau )  \leq   \frac{ (  r_n s_n  )!}{H_{r_n,s_n}} \, \,   t^{s_n}_{\mathcal{M}_n}(\tau)     t^{r_n}_{\mathcal{N}_n}(\tau)    \  .  
  \]
 By applying the bound $m! \leq m^m$ for integers $m\geq 0$, we deduce that
\[  \left( \frac{ (  r_n s_n  )!}{H_{r_n,s_n}}\right)^{\frac{1}{e_n^{\mathcal{M}\otimes\mathcal{N}}}} \leq     ((  r_n s_n  )!)^{\frac{1}{e_n^{\mathcal{M}\otimes\mathcal{N}}}} \leq \exp \left( \frac{r_ns_n}{e_n^{\mathcal{M}\otimes\mathcal{N}}} \log(r_ns_n) \right) \ . \]
Since $\rank (\mathcal{M} \otimes \mathcal{N})_n = r_n s_n$,  the assumption  \eqref{exponentgrowthcondition} implies that the previous expression tends to $1$ in the limit as $n\rightarrow \infty$.
 It follows that 
  \[ 
   \ts_{\mathcal{M} \otimes \mathcal{N}} (\tau )  \leq     \, \, \limsup_{n\rightarrow \infty}    \left(   \left(t^{\frac{1}{e^{\mathcal{M}}_n}}_{\mathcal{M}_n}(\tau) \right)^{\alpha_n}        \left(t^{\frac{1}{e^{\mathcal{N}}_n}}_{\mathcal{N}_n}(\tau) \right)^{\beta_n} \right)    \quad \hbox{ where } \quad 
    \alpha_n =     \frac{e^{\mathcal{M}}_n \, \rank\, \mathcal{N}_n}{ e^{\mathcal{M}\otimes\mathcal{N}}_n} \ ,  \ \beta_n =  \frac{e^{\mathcal{N}}_n \, \rank\, \mathcal{M}_n}{ e^{\mathcal{M}\otimes\mathcal{N}}_n}  \ . \]
 By assumption $\alpha_n \rightarrow \alpha$ and $\beta_n\rightarrow \beta$ as $n\rightarrow \infty$.   The result follows from the definition of the limit supremum, together with  the fact that  the previous expression only involves positive quantities,  and that  the exponential and logarithm restricted to the positive real axis are   increasing monotone functions. 
  \end{proof} 
  \begin{cor}  \label{cor: stationarytp} Let $\mathcal{M}$, $\mathcal{N}$ and $\mathcal{M} \otimes \mathcal{N}$ be as above, but  suppose that $\mathcal{N}_n $ is stationary, i.e., $\mathcal{N}_n = \mathcal{N}$ for all $n$ sufficiently large. In particular, $\mathcal{N}$ is of finite rank. Suppose we are given exponent functions $e^{\mathcal{M}}$ and $e^{\mathcal{M} \otimes \mathcal{N}}$
  where the latter satisfies \eqref{exponentgrowthcondition}, such that the following limit exists:
  \[   \lim_{n \rightarrow \infty}  \frac{e^{\mathcal{M}}_n \, \rank\, \mathcal{N}}{ e^{\mathcal{M}\otimes\mathcal{N}}_n} =\alpha\ .   \] 
    Then,  if $\tau$ is bounded,  we have the upper bound:
   \[   t^{\sup}_{\mathcal{M} \otimes \mathcal{N}} (\tau )  \leq     \, \,   \left(t^{\sup}_{\mathcal{M}}(\tau) \right)^{\alpha}    \  .  
  \]
  \end{cor}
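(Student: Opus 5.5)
The plan is to derive the corollary directly from the finite-level inequality of Theorem \ref{thm: upperboundtau}, mirroring the proof of Theorem \ref{thm: tsuptensorproductUB}. The $\mathcal{N}$-factor will be shown to disappear in the limit because it is a fixed finite quantity raised to a power that grows only linearly in $r_n$. Write $r_n=\rank\,\mathcal{M}_n$ and $s=\rank\,\mathcal{N}$, and take $n$ large enough that $\mathcal{N}_n=\mathcal{N}$ and $r_n>0$. Theorem \ref{thm: upperboundtau} gives
\[ t_{\mathcal{M}_n\otimes\mathcal{N}}(\tau) \leq \frac{(r_ns)!}{H_{r_n,s}}\, t_{\mathcal{M}_n}(\tau)^{s}\, t_{\mathcal{N}}(\tau)^{r_n}. \]

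The first step is to check that $t_{\mathcal{N}}(\tau)$ is a finite constant $K$ independent of $n$. This is where boundedness of $\tau$ is used. The generators $\m'_1,\ldots,\m'_s$ of $\mathcal{N}$ are fixed regular functions, so they are continuous on $V(\C)$ and hence bounded on the bounded set $\tau$, say by $B$; strictly, one bounds them on the closure of $\tau$, which is compact. Hadamard's inequality, or simply expanding the determinant, then gives $K\leq s!\,B^s<\infty$. If $K=0$ the left-hand side vanishes and there is nothing to prove, so assume $0<K<\infty$, and likewise that $\ts_{\mathcal{M}}(\tau)<\infty$.

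The second step is to raise the inequality to the power $1/e_n^{\mathcal{M}\otimes\mathcal{N}}$. The combinatorial factor tends to $1$ exactly as in the proof of Theorem \ref{thm: tsuptensorproductUB}: one uses $H_{r_n,s}\geq 1$ and $m!\leq m^m$, together with the growth condition \eqref{exponentgrowthcondition} applied to $\rank(\mathcal{M}\otimes\mathcal{N})_n=r_ns$. For the $\mathcal{N}$-factor, note that $(r_ns)\log(r_ns)/e_n^{\mathcal{M}\otimes\mathcal{N}}\to 0$, so certainly $r_n/e_n^{\mathcal{M}\otimes\mathcal{N}}\to 0$. Hence $K^{r_n/e_n^{\mathcal{M}\otimes\mathcal{N}}}\to 1$, since $\log K$ is a fixed finite number.

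The remaining factor is $\big(t_{\mathcal{M}_n}(\tau)^{1/e_n^{\mathcal{M}}}\big)^{\alpha_n}$, where $\alpha_n=e_n^{\mathcal{M}}s/e_n^{\mathcal{M}\otimes\mathcal{N}}\to\alpha$. Taking the limsup, as in the earlier proof, gives $\ts_{\mathcal{M}}(\tau)^{\alpha}$. This uses monotonicity of $x\mapsto x^{a}$ on positive reals and the convergence of the exponents $\alpha_n$.

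The main obstacle, though minor, is justifying the finiteness of $K$ and the vanishing of its contribution. This is precisely the reason the boundedness hypothesis is imposed. The rest is bookkeeping with limsups, including the edge cases where $\ts_{\mathcal{M}}(\tau)\in\{0,\infty\}$, which are handled by the usual conventions.
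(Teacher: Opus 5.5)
Your proof is correct and is essentially the paper's argument. The paper applies Theorem~\ref{thm: tsuptensorproductUB} with the constant exponent $e^{\mathcal{N}}_n=\rank\,\mathcal{N}$, which gives $\beta=0$; because $\tau$ is bounded, the factor $\ts_{\mathcal{N}}(\tau)^{\beta}$ is finite and drops out. You instead unwind that theorem's proof in this special case, with the same two ingredients: finiteness of $t_{\mathcal{N}}(\tau)$, and the exponent $r_n/e^{\mathcal{M}\otimes\mathcal{N}}_n\to 0$.
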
 
  
  \begin{proof}  Define an exponent function for $\mathcal{N}_n$ to be $e^{\mathcal{N}}_n = \rank\,  \mathcal{N}$.  With this definition, the limit $\beta$ in theorem \ref{thm: tsuptensorproductUB} is zero. Since $\tau$ is bounded,  $t^{\sup}_{\mathcal{N}}(\tau) $ is finite, and   hence \eqref{eqn: tsuptensorproductUB} implies the stated inequality, since
  $ \left(t^{\sup}_{\mathcal{N}}(\tau) \right)^{\beta}  =1$. 
  \end{proof}
  
  \begin{rem}
  A natural definition of $e^{\mathcal{M} \otimes \mathcal{N}}$, which applies in the  examples \ref{ex: Supstandardexamples}, is 
  \begin{equation} \label{etensorcanonical}
  e^{\mathcal{M} \otimes \mathcal{N}}_n =  e_n^{\mathcal{M}} \, \rank \, \mathcal{N}_n + e_n^{\mathcal{N}}  \rank\, \mathcal{M}_n\ .  \
  \end{equation} 
  When this holds,  we have $\alpha+\beta=1$ in the setting of theorem \ref{thm: tsuptensorproductUB}. 
  \end{rem} 
  In some cases, \eqref{eqn: tsuptensorproductUB} is an equality.  
  The following corollary is not as general as possible since it assumes the existence of  limits,  but is related to results in the literature on transfinite diameters \cite{LevenbergWielonsky}. 
  It  proves that the (supremal)  transfinite diameter is multiplicative with respect to external products.
  
 
 \begin{cor} Let $\mathcal{N}_i \subset R_i$ be  free $\Z$-modules filtered by finite rank submodules and equipped with exponent functions $e^{\mathcal{N}_i}$. Consider the tensor product $\mathcal{N}_1 \otimes \mathcal{N}_2 \subset R_1 \otimes R_2$, which  is free and equipped with the tensor product filtration. 
 Suppose that
     \[   \lim_{n \rightarrow \infty}  \frac{e^{\mathcal{N}_i}_n \, \rank\, (\mathcal{N}_{2-i})_n}{ e^{\mathcal{N}_1\otimes\mathcal{N}_2}_n}  =\alpha_i 
    \quad  \hbox{ for } \quad i =1,2\  .  \]
 Write  $V_i = \mathrm{Spec} \,R_i$, and  consider $\tau_i \subset V_i(\C)$ for $i=1,2$. Suppose that  the limit suprema $\ts_{\mathcal{N}_i}(\tau_i)$  are actual limits for $i=1,2$.   Then 
  \[  \ts_{\mathcal{N}_1 \otimes \mathcal{N}_2} (\tau_1 \times \tau_2)  = \left(\ts_{\mathcal{N}_1}(\tau_1)\right)^{\alpha_1}    \left(\ts_{\mathcal{N}_2}(\tau_2) \right)^{\alpha_2} \  
\] 
and the limit supremum on the left is also a limit.
 \end{cor}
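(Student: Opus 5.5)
The upper bound comes directly from Theorem \ref{thm: tsuptensorproductUB}. The lower bound needs a separate construction, and I will build it from the amalgam structure of the tensor product Vandermonde matrix evaluated at product points. Write $r_n = \rank (\mathcal{N}_1)_n$ and $s_n=\rank(\mathcal{N}_2)_n$.

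\emph{Upper bound.} Regard $\mathcal{N}_1$ and $\mathcal{N}_2$ as submodules of $R_1\otimes R_2$ via pullback along the two projections $V_1\times V_2 \to V_i$. These pullbacks are admissible, so by \eqref{eqn: functorial} we have $\ts_{\mathcal{N}_1}(\tau_1\times\tau_2)=\ts_{\mathcal{N}_1}(\tau_1)$, and similarly for $\mathcal{N}_2$. Applying Theorem \ref{thm: tsuptensorproductUB} to $\tau=\tau_1\times\tau_2$ with $\alpha=\alpha_1$ and $\beta=\alpha_2$ then gives the inequality $\leq$. Here I assume, as the theorem requires, that $e^{\mathcal{N}_1\otimes\mathcal{N}_2}$ satisfies \eqref{exponentgrowthcondition}.

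\emph{Lower bound.} Choose near-optimal points $x=(x_1,\ldots,x_{r_n})\in\tau_1^{r_n}$ and $y=(y_1,\ldots,y_{s_n})\in \tau_2^{s_n}$, so that $|\det V_{(\mathcal{N}_1)_n}(x)|\geq (1-\epsilon_n)\,t_{(\mathcal{N}_1)_n}(\tau_1)$, and similarly for $y$. Evaluate at the grid $z=(x_i,y_j)_{i,j}\in(\tau_1\times\tau_2)^{r_ns_n}$. With the basis $\m_a\otimes\m'_b$, the Vandermonde matrix at this grid is exactly the Kronecker product $V_{(\mathcal{N}_1)_n}(x)\otimes V_{(\mathcal{N}_2)_n}(y)$, since its entry in row $(i,j)$ and column $(a,b)$ is $\m_a(x_i)\m'_b(y_j)$. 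The Kronecker determinant formula then gives
\[ |\det V_{(\mathcal{N}_1\otimes\mathcal{N}_2)_n}(z)| = |\det V_{(\mathcal{N}_1)_n}(x)|^{s_n}\,|\det V_{(\mathcal{N}_2)_n}(y)|^{r_n}. \]
Taking suprema, we get $t_{(\mathcal{N}_1\otimes\mathcal{N}_2)_n}(\tau_1\times\tau_2)\geq t_{(\mathcal{N}_1)_n}(\tau_1)^{s_n}\,t_{(\mathcal{N}_2)_n}(\tau_2)^{r_n}$. Raising both sides to the power $1/e_n^{\mathcal{N}_1\otimes\mathcal{N}_2}$ and rewriting the right-hand side as $\bigl(t_{(\mathcal{N}_1)_n}^{1/e^{\mathcal{N}_1}_n}\bigr)^{\alpha_{1,n}}\bigl(t_{(\mathcal{N}_2)_n}^{1/e^{\mathcal{N}_2}_n}\bigr)^{\alpha_{2,n}}$, with $\alpha_{i,n}\to\alpha_i$, we use that the inner quantities converge by hypothesis. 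The liminf of the left-hand side is therefore at least $\ts_{\mathcal{N}_1}(\tau_1)^{\alpha_1}\ts_{\mathcal{N}_2}(\tau_2)^{\alpha_2}$.

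Combining liminf $\geq$ this product $\geq$ limsup shows that the limit exists and equals the product. The main point of care is the degenerate case where a factor is $0$ or $\infty$. If some $\ts_{\mathcal{N}_i}(\tau_i)=0$, the upper bound alone gives $0$ and the lower bound is trivial, provided the other factor is finite. The case $0\cdot\infty$ must be excluded or handled by convention, for instance by assuming that both $\tau_i$ are bounded. A further point to check is that the Kronecker identification really uses the filtration $(\mathcal{N}_1\otimes\mathcal{N}_2)_n=(\mathcal{N}_1)_n\otimes(\mathcal{N}_2)_n$, which holds by definition.
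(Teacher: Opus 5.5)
Your proof is correct and follows the paper's argument. The upper bound comes from Theorem \ref{thm: tsuptensorproductUB}, and the lower bound comes from evaluating at the product grid $(x_i,y_j)$, where the Vandermonde matrix becomes the Kronecker product with determinant $|\det V_{(\mathcal{N}_1)_n}(x)|^{s_n}\,|\det V_{(\mathcal{N}_2)_n}(y)|^{r_n}$. Your extra care only makes explicit what the paper leaves tacit: pulling back along the projections, flagging the implicit growth hypothesis \eqref{exponentgrowthcondition} on $e^{\mathcal{N}_1\otimes\mathcal{N}_2}$, and treating the degenerate $0$ and $\infty$ cases.
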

 
 \begin{proof} Let $R= R_1 \otimes R_2$. There are canonical embeddings of $\Z$-modules $R_i \rightarrow R$. 
 The inequality  $ \ts_{\mathcal{N}_1 \otimes \mathcal{N}_2} (\tau_1 \times \tau_2)  \leq \left(\ts_{\mathcal{N}_1}(\tau_1)\right)^{\alpha_1}    \left(\ts_{\mathcal{N}_2}(\tau_2) \right)^{\alpha_2} $
 follows from theorem \ref{thm: tsuptensorproductUB}.
 To obtain a lower bound  the idea is as follows. For all sufficiently large $n$,  choose a configuration of $r_i=\rank (\mathcal{N}_i)_n$ points $z' \in \tau_1^{r_1} $ 
 and $z'' \in \tau_2^{r_2}$ such that $|\det V_{(\mathcal{N}_1)_n}(z')|$ and $|\det V_{(\mathcal{N}_2)_n}(z'')|$ are arbitrarily close to $\ts_{\mathcal{N}_1}(\tau_1)$ and $\ts_{\mathcal{N}_2}(\tau_2)$, respectively. Consider the configuration of  $r_1r_2$ points $w_{ij}=(z'_i, z''_j)$ in $\tau_1 \times \tau_2$ where $1\leq i \leq r_1$, $1\leq j \leq r_2$. For a suitable ordering of these points, one may show \cite[\S3.2,\S6]{BrVdM}  that the Vandermonde matrix $V_{(\mathcal{N}_1\otimes \mathcal{N}_2)_n} (w_{ij}) $, expressed as an amalgam of two matrices, is in fact their  Kronecker tensor product and hence 
 \[ |\det V_{(\mathcal{N}_1\otimes \mathcal{N}_2)_n} (w_{ij})| = 
 \left| \det V_{(\mathcal{N}_1)_n}(z') \right|^{r_2}  \left|\det V_{(\mathcal{N}_2)_n}(z'')\right|^{r_1} \ . \]
 By raising to the power $(e_n^{\mathcal{N}_1 \otimes \mathcal{N}_2})^{-1}$, we obtain a lower bound for  $t_n^{\mathcal{N}_1\otimes \mathcal{N}_2}(\tau_1\times \tau_2)$ which is arbitrarily close to  $\left(\ts_{\mathcal{N}_1}(\tau_1)\right)^{\alpha_1}    \left(\ts_{\mathcal{N}_2}(\tau_2) \right)^{\alpha_2}$.
  \end{proof}

     
     \subsection{Supremal diameter for direct sums}
     Let $\mathcal{M}$, $\mathcal{N} \subset R$ be two  free $\Z$-modules equipped with filtrations by finite rank submodules.  The direct sum 
    $\mathcal{M} \oplus \mathcal{N} $
     is given the  filtration $(\mathcal{M} \oplus \mathcal{N})_n=   \mathcal{M}_n \oplus \mathcal{N}_n$. We assume that it is  free when viewed as a submodule of $R$ via the map $R\oplus R \rightarrow R$.

     \begin{prop} \label{prop: directsums}
     Suppose that we are given free $\Z$-submodules $\mathcal{M}$, $\mathcal{N}$ and $\mathcal{M} \oplus \mathcal{N}$  of $R$ as above, and exponents $e^{\mathcal{M}}$, $e^{\mathcal{N}}$, $e^{\mathcal{M}\oplus \mathcal{N}}$, where the latter satisfies \eqref{exponentgrowthcondition}. 
      If 
    \[  \lim_{n\rightarrow \infty} \frac{ e_n^{\mathcal{M}} }{ e_n^{\mathcal{M} \oplus \mathcal{N} } } =  \alpha \quad \  , \quad   \lim_{n\rightarrow \infty} \frac{ e_n^{\mathcal{N}} }{ e_n^{\mathcal{M} \oplus \mathcal{N}  }} = \beta   \]
     we have the inequality
   \[ \ts_{\mathcal{M} \oplus \mathcal{N}} (\tau) \leq \left(\ts_{\mathcal{M}}(\tau) \right)^{\alpha}    \left(\ts_{\mathcal{N}}(\tau)\right)^{\beta} \ . \]
     \end{prop}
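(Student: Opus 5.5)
The plan mirrors the proof of Theorem \ref{thm: tsuptensorproductUB}, with Proposition \ref{prop: upperboundtaudirectsum} in place of Theorem \ref{thm: upperboundtau}. Write $r_n = \rank\, \mathcal{M}_n$ and $s_n = \rank\, \mathcal{N}_n$, so that $\rank (\mathcal{M}\oplus\mathcal{N})_n = r_n+s_n$. Proposition \ref{prop: upperboundtaudirectsum}, applied to $\mathcal{M}_n,\mathcal{N}_n$, gives for every $n$
\[ t_{(\mathcal{M}\oplus\mathcal{N})_n}(\tau) \leq \binom{r_n+s_n}{r_n}\, t_{\mathcal{M}_n}(\tau)\, t_{\mathcal{N}_n}(\tau)\ . \]
The hypothesis that $\mathcal{M}\oplus\mathcal{N}$ is free inside $R$ is exactly what that proposition needs.

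Next I will raise both sides to the power $1/e_n^{\mathcal{M}\oplus\mathcal{N}}$ and show that the combinatorial factor disappears in the limit. Since $\binom{r_n+s_n}{r_n} \leq 2^{r_n+s_n}$, its $e_n^{\mathcal{M}\oplus\mathcal{N}}$-th root is at most $\exp\big((r_n+s_n)\log 2 / e_n^{\mathcal{M}\oplus\mathcal{N}}\big)$. By the growth condition \eqref{exponentgrowthcondition}, applied to $e^{\mathcal{M}\oplus\mathcal{N}}$ with rank $r_n+s_n$, this tends to $1$; even the cruder bound $(r_n+s_n)^{r_n+s_n}$ would suffice.

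The remaining factors I rewrite as
\[ \big(t_{\mathcal{M}_n}(\tau)^{1/e_n^{\mathcal{M}}}\big)^{\alpha_n}\, \big(t_{\mathcal{N}_n}(\tau)^{1/e_n^{\mathcal{N}}}\big)^{\beta_n}, \qquad \alpha_n = \frac{e_n^{\mathcal{M}}}{e_n^{\mathcal{M}\oplus\mathcal{N}}}, \quad \beta_n = \frac{e_n^{\mathcal{N}}}{e_n^{\mathcal{M}\oplus\mathcal{N}}}\ , \]
with $\alpha_n\to\alpha$ and $\beta_n\to\beta$ by hypothesis. Taking $\limsup$ then uses three facts: the limsup of a product of non-negative sequences is at most the product of the limsups, whenever that product is not of the form $0\cdot\infty$; and $x\mapsto x^{\gamma}$ is monotone and continuous on $[0,\infty]$ for $\gamma\geq 0$. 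Together these give $\limsup_n a_n^{\alpha_n} \leq (\limsup_n a_n)^{\alpha}$, up to the degenerate cases.

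The main obstacle is exactly these degenerate edge cases; the rest is routine. If one of $\ts_{\mathcal{M}}(\tau)$, $\ts_{\mathcal{N}}(\tau)$ is infinite, or if it equals $0$ or $1$ while $\alpha$ or $\beta$ is $0$, the limsup step needs care. My approach is to treat the case where both values are finite and positive first, where an $\epsilon$-argument on logarithms works cleanly. In the other cases I will either adopt the convention that the right-hand side is $\infty$, making the inequality trivial, or note that a zero value forces the left-hand side to $0$ along a tail, which again makes the inequality hold.
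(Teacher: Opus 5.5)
Your proposal is correct and follows the paper's proof: apply Proposition \ref{prop: upperboundtaudirectsum} at each level $n$, remove the factor $\binom{r_n+s_n}{r_n}\le 2^{r_n+s_n}$ using the growth of $e_n^{\mathcal{M}\oplus\mathcal{N}}$, then pass to the $\limsup$ with $\alpha_n\to\alpha$ and $\beta_n\to\beta$. The paper does not treat the degenerate cases at all; note only that $\ts_{\mathcal{M}}(\tau)=0$ does not make $t_{\mathcal{M}_n}(\tau)$ vanish on a tail, so there you should simply run the same $\epsilon$-argument (it gives $0$ if $\alpha>0$ and the other factor is finite, while if $\alpha=0$ it gives $t_{\mathcal{M}_n}(\tau)^{1/e_n^{\mathcal{M}\oplus\mathcal{N}}}\le 1$ eventually, i.e.\ the convention $0^0=1$).
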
 
     
     \begin{proof} Let $n\geq 0$,  $r_n =  \rank \,   \mathcal{M}_n$,  and  $s_n =  \rank \,  \mathcal{N}_n$.       By proposition \ref{prop: upperboundtaudirectsum}
    \[  t_{\mathcal{M}_n \oplus \mathcal{N}_n} (\tau) \leq    \binom{r_n+s_n}{r_n}   t_{\mathcal{M}_n} (\tau) t_{\mathcal{N}_n} (\tau) \ . \]
  Since  $ \binom{r_n+s_n}{r_n} \leq 2^{r_n+s_n}$ and $\frac{e_n^{\mathcal{M}\oplus \mathcal{N} } }{ r_n+s_n }   \rightarrow  \infty$
    as $n\rightarrow \infty$,  we have 
  \[ \binom{r_n+s_n}{r_n}^{\frac{1}{e_n^{\mathcal{M} \oplus \mathcal{N}}}}\rightarrow 1\quad  \hbox{ as } n\rightarrow \infty\ , \]
 and we conclude by raising both sides of the previous inequality to the power $\frac{1}{e_n^{\mathcal{M} \oplus \mathcal{N}}}$ and letting $n\rightarrow \infty$. This uses similar properties of limit suprema as set out at the end of the proof of theorem \ref{thm: tsuptensorproductUB}. 
        \end{proof}
     
     \begin{rem}  \label{rem: exponentdirectsum} The canonical exponent function for a direct sum is
     \[ e_n^{\mathcal{M} \oplus \mathcal{N}} = e_n^{\mathcal{M}} +  e_n^{\mathcal{N}}\ . \]
 With this definition, the exponents in the previous theorem satisfy 
      \[ \alpha  =  \lim_{n\rightarrow \infty}   \frac{e_n^{\mathcal{M}}}{ e_n^{\mathcal{M}} +  e_n^{\mathcal{N}}} \qquad , \qquad   \beta  =  \lim_{n\rightarrow \infty}   \frac{e_n^{\mathcal{N}}}{ e_n^{\mathcal{M}} +  e_n^{\mathcal{N}}}  \]
     and in particular, $\alpha + \beta =1$.    The proof only requires that
     $  \frac{e_n^{\mathcal{M}\oplus \mathcal{N} } }{ \rank(\mathcal{M}_n \oplus \mathcal{N}_n) }   \rightarrow \infty$ 
      which is weaker than    \eqref{exponentgrowthcondition}.
         \end{rem}

   
   One may also retrieve  corollary \ref{cor: stationarytp} using the previous  proposition by writing $\mathcal{M} \otimes \mathcal{N}$ as a direct sum 
   $\oplus_i \m_i \mathcal{M}$ where the $\m_i$ are a basis  of $\mathcal{N}$.

   \begin{cor}  \label{cor: directsums} Suppose that  $\mathcal{M}, \mathcal{N}_1,\ldots, \mathcal{N}_k \subset R$ are  free $\Z$-modules with a  filtration by finite rank submodules such that, for all  $n \geq 0$, 
   \[  \mathcal{M}_n = \bigoplus_{i=1}^k   (\mathcal{N}_i)_n\ ,  \]
and exponents $e^{\mathcal{N}_i}$ and  $e^{\mathcal{M}}$, where the latter satisfies \eqref{exponentgrowthcondition}. 
      If 
    $ \lim_{n\rightarrow \infty} \frac{ e_n^{\mathcal{N}_i} }{ e_n^{\mathcal{M}  } } =  \alpha_i
    $
    then 
   \[ \ts_{\mathcal{M} }  (\tau) \leq \prod_{i=1}^k  \left(\ts_{\mathcal{N}_i}(\tau) \right)^{\alpha_i}   \ . \]
     \end{cor}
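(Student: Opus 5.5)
The plan is to argue directly from the $k$-fold Laplace expansion, which is the natural generalisation of proposition \ref{prop: upperboundtaudirectsum}, rather than iterating proposition \ref{prop: directsums}. Iterating would require choosing exponent functions for the intermediate partial sums $(\mathcal{N}_1\oplus\cdots\oplus\mathcal{N}_j)_n$ and checking the growth condition for each of them, which is avoidable.

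Fix $n$, write $r_{i,n}=\rank (\mathcal{N}_i)_n$ and $R_n=\sum_i r_{i,n}=\rank \mathcal{M}_n$. Choose a basis of $\mathcal{M}_n$ that is the union of bases of the $(\mathcal{N}_i)_n$, ordered in consecutive blocks. Then for $z\in\tau^{R_n}$ the generalised Laplace expansion along these column blocks gives
\[ \det V_{\mathcal{M}_n}(z) = \sum_{I_1\sqcup\cdots\sqcup I_k} \pm \prod_{i=1}^k \det V_{(\mathcal{N}_i)_n}(z_{I_i}), \]
where the sum runs over ordered partitions of $\{1,\ldots,R_n\}$ into sets with $|I_i|=r_{i,n}$. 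This expansion follows by induction on $k$ from the two-block case already used in proposition \ref{prop: upperboundtaudirectsum}. The number of terms is the multinomial coefficient $\binom{R_n}{r_{1,n},\ldots,r_{k,n}}\le k^{R_n}$. Since every $z_{I_i}$ lies in $\tau^{r_{i,n}}$, each term is bounded by $\prod_i t_{(\mathcal{N}_i)_n}(\tau)$. Taking the supremum over $z$ gives
\[ t_{\mathcal{M}_n}(\tau)\le k^{R_n}\prod_{i=1}^k t_{(\mathcal{N}_i)_n}(\tau). \]

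Next, raise this to the power $1/e^{\mathcal{M}}_n$. By \eqref{exponentgrowthcondition}, $e^{\mathcal{M}}_n/R_n\to\infty$, which is in fact more than needed (compare remark \ref{rem: exponentdirectsum}). Since $k$ is fixed, $k^{R_n/e^{\mathcal{M}}_n}\to1$. For the remaining factors, write
\[ t_{(\mathcal{N}_i)_n}(\tau)^{1/e^{\mathcal{M}}_n}=\left(t_{(\mathcal{N}_i)_n}(\tau)^{1/e^{\mathcal{N}_i}_n}\right)^{\alpha_{i,n}}, \qquad \alpha_{i,n}=e^{\mathcal{N}_i}_n/e^{\mathcal{M}}_n\to\alpha_i. \]

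The main point requiring care is passing to the $\limsup$ of a product of $k$ factors with varying exponents. I would use the two facts below, exactly as at the end of the proof of theorem \ref{thm: tsuptensorproductUB}:
\[ \limsup \prod_i a_{i,n}\le\prod_i\limsup a_{i,n}\quad\text{for nonnegative sequences}, \qquad \limsup_n a_n^{\alpha_{i,n}}=(\limsup_n a_n)^{\alpha_i}\quad\text{when }\alpha_{i,n}\to\alpha_i>0. \]
The second identity holds by monotonicity and continuity of $x\mapsto x^{\alpha}$ on $[0,\infty)$.

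Some edge cases need to be addressed. If some $\ts_{\mathcal{N}_i}(\tau)$ is infinite, the bound is trivial provided the product is read as $+\infty$. The case $\alpha_i=0$ with a finite $\limsup$ makes the corresponding factor tend to at most $1$, consistent with the convention $x^0=1$. The case $\alpha_i=0$ with $\ts_{\mathcal{N}_i}(\tau)=\infty$ is ambiguous, so I would assume finiteness there, as is implicit in the statement. Putting these together gives
\[ \ts_{\mathcal{M}}(\tau)\le\prod_{i=1}^k\ts_{\mathcal{N}_i}(\tau)^{\alpha_i}. \]
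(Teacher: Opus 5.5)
Your argument is correct and is essentially the paper's: the corollary is stated without a separate proof as a consequence of Proposition \ref{prop: directsums}, whose proof is exactly your Laplace expansion, binomial-coefficient bound and $\limsup$ argument, and you simply run it once for all $k$ blocks using the multinomial bound $k^{R_n}$ instead of iterating. (Iterating is also painless: give each partial sum $(\mathcal{N}_j\oplus\cdots\oplus\mathcal{N}_k)_n$ the exponent $e^{\mathcal{M}}_n$, which inherits \eqref{exponentgrowthcondition} because its rank is at most $\rank\,\mathcal{M}_n$.)
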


     

  \section{Upper bounds for a square region bounded by a hyperbola} \label{sect: hyperbolaregion}
  As an application of these techniques, we compute some  upper estimates for the  rectangular supremal transfinite diameter $\ts^{\rec}_{(1,1)}(\tau_{\epsilon})$ of the region:
  \[ \tau_{\epsilon} = \{ (x,y) \in \R^2:    0\leq x, y\leq 1 \ , \   xy \leq \epsilon\}
  \]
 and show that $\ts^{\rec}_{(1,1)}(\tau_{\epsilon}) \leq \left( \frac{\epsilon}{16}\right)^{\frac{1}{3}}$ as $\epsilon \rightarrow 0$.
  A key point is that we reduce the \emph{rectangular} supremal diameter (about which little seems to be known) to the \emph{homogeneous}  diameter (about which much more is known)  of a different region.  
   If  formulae for the transfinite diameters of two-dimensional figures in $\R^2$ such as a quadrilateral were available, our estimates could be significantly improved. 
  
  \subsection{Na\"ive bound} Since $\tau_{\epsilon} \subset [0,1]^2$ we deduce from \eqref{eqn: Subsetinequality}   that
  \[  \ts_{(1,1)}^{ \rec}(\tau_\epsilon) \leq   \ts_{(1,1)}^{\rec} ([0,1]^2) =   (\ts^{\hom} ([0,1])^{\frac{1}{2}})^2= \frac{1}{4}\ ,  \]
which follows from   \eqref{eqn: trecsquarebox} and    \eqref{eqn:supinterval} since $\ts_{(1)}^{\rec}= \ts^{\hom}$ in dimension $1$.

  \subsection{Use of tensor products: map to a curved rhombus} 
  \begin{lem} \label{lem: isoZmodulesxyxplusy}  Let $\mathcal{N} = \Z\oplus \Z x$. The inclusion 
  \[  i: \Z[xy, x+y] \otimes \mathcal{N} \To \Z[x,y]\]
  is an isomorphism of $\Z$-modules.
  \end{lem}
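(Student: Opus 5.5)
The plan is to reduce the statement to the fundamental theorem of symmetric polynomials, applied over $\Z$. Write $e_1 = x+y$ and $e_2 = xy$. The map $i$ sends $p(e_1,e_2)\otimes 1 + q(e_1,e_2)\otimes x$ to $p + qx$. So we must show that every $F\in\Z[x,y]$ has a unique expression $F = p + qx$ with $p,q\in\Z[x+y,xy]$. We also use that $\Z[e_1,e_2]$ is a polynomial ring, that is, $e_1,e_2$ are algebraically independent. This is the classical statement, valid over any commutative ring, in particular over $\Z$. Consequently $\Z[xy,x+y]$ is free, and the source of $i$ is a free $\Z$-module with basis $e_1^a e_2^b$ and $e_1^a e_2^b x$.

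\emph{Surjectivity.} Set $S=\Z[e_1,e_2]$. The element $x$ satisfies the monic relation $x^2 = e_1 x - e_2$ over $S$, and $y = e_1 - x$. Hence $S + Sx$ is closed under multiplication by $x$ and by $y$, and it contains $1$. It is therefore a subring containing $x$ and $y$, so it equals $\Z[x,y]$.

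\emph{Injectivity.} Suppose $p + qx = 0$ with $p,q\in S$. Apply the involution $x\leftrightarrow y$, which fixes $S$, to get $p + qy = 0$. Subtracting gives $q(x-y)=0$. Since $\Z[x,y]$ is a domain, $q=0$, and then $p=0$. Finally, because $S$ is free on the monomials $e_1^ae_2^b$ (algebraic independence), the vanishing of $p$ and $q$ as polynomials means their coefficient vectors vanish. So $i$ is injective on the free module $S\otimes\mathcal{N}$.

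The only point needing care is the algebraic independence of $e_1,e_2$ over $\Z$. One could cite it, or prove it by a leading-term argument. In lexicographic order with $x>y$, the monomial $e_1^a e_2^b$ has leading monomial $x^{a+b}y^b$, and distinct pairs $(a,b)$ give distinct leading monomials, each with coefficient $1$. A nontrivial $\Z$-relation is then impossible. Everything else is routine.
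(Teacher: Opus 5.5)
Your proof is correct. The surjectivity half is essentially the paper's. The paper also exploits the monic relation $x^2 = x(x+y) - xy$: it promotes $i$ to a ring map out of $\Z[u,v][x]/(x^2-vx+u)$ and notes that $x$ and $y$ lie in the image, which is the same content as your closure of $S+Sx$ under multiplication by $x$ and $y$.

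The routes differ for injectivity. The paper grades everything, with $\deg u=2$ and $\deg v=\deg x=\deg y=1$. It observes that the Poincar\'e series of the source, $\frac{1+t}{(1-t)(1-t^2)}$, equals $\frac{1}{(1-t)^2}$, that of $\Z[x,y]$. Hence the surjective graded map $i\otimes\Q$ is bijective in each degree, and $\ker i=0$ because it is a submodule of a free $\Z$-module with $(\ker i)\otimes\Q=0$.

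You instead use the swap $x\leftrightarrow y$ to show directly that $1,x$ are $S$-linearly independent. You then have to supply the algebraic independence of $xy$ and $x+y$ separately, which your leading-monomial argument does correctly.

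The counting argument gets that algebraic independence for free, since surjectivity plus equal graded dimensions forces injectivity of the abstract $\Z[u,v]\otimes\mathcal{N}$. It also makes the compatibility with degrees explicit, and that is what is used immediately afterwards in Proposition \ref{prop: mapxyxplusy}. Your argument avoids the passage through $\Q$ and torsion-freeness. It also carries over to any commutative base ring $R$, once "domain" is replaced by the observation that $x-y$, being monic in $x$, is a non-zero-divisor in $R[x,y]$; your leading coefficients are all $1$, so the independence step still holds.
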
 
 \begin{proof} Let  $u = xy$ and $v =x+y$. Then one has the relation 
 $x^2 = xv - u$. Consider the ring structure on $ \Z[u ,v] \otimes \mathcal{N}$ induced by the  identification of  $\Z$-modules:
 \[ \Z[u ,v] \otimes \mathcal{N} \cong \left( \Z[u,v] \otimes \Z[x] \right)/I \]
 where $I$ is the ideal generated by $x^2-xv+u$. Thus the natural map $i$  of $\Z$-modules  can be promoted to a map of rings for the induced ring structure:
\[ (u,v,x) \mapsto (xy, x+y, x) :  \left( \Z[u,v] \otimes \Z[x] \right)/I \To \Z[x,y]\ .\]
 Since the ring generators $x, y$ of $\Z[x,y]$ are in the image of $i$, it  is surjective.
 To prove injectivity, consider the graded Poincar\'e series of $\Z[u,v]$, where $u$ has degree $2$ and $v$ has degree $1$. It is 
 \[ P_{\Z[u,v]} (t) =  \frac{1}{(1-t)(1-t^2)}\]
If $x,y$ are given degree $1$, we have $P_{\mathcal{N}}(t) = 1+t$ and hence
\[ P_{\Z[u,v] \otimes \mathcal{N}}(t) = \frac{1+t}{(1-t)(1-t^2)} = \frac{1}{(1-t)^2}\]
which equals the Poincar\'e series of $\Z[x,y]$, since it has two generators in degree $1$. This proves that after tensoring with $\Q$, the map $i\otimes \Q$ is an isomorphism of graded $\Q$-vector spaces. Since a submodule of a free $\Z$-module is free, $(\ker i)\otimes  \Q = \ker (i \otimes \Q) =0$, and hence $i$ is injective.
 \end{proof}

 \begin{prop} \label{prop: mapxyxplusy} 
  Let $\tau \subset \R^2$ be a region in $\R^2$, and consider the map 
  \[ \phi: \R^2 \To \R^2\]
  which maps $(x,y)$ to $(xy, x+y)$. Then $ \ts^{\rec}_{(1,1)} (   \tau ) \leq (\ts^{\hom}( \phi \tau))^{\frac{2}{3}}$. 
  \end{prop}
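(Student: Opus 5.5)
The plan is to decompose the rectangular module as a direct sum of two pullbacks under $\phi$ of the homogeneous module in the coordinates $(u,v)$. The direct-sum and functoriality results then give the bound with the exponent $\tfrac{2}{3}$ coming out of the exponent bookkeeping. Write $u=xy$, $v=x+y$, so that $\phi^*$ sends the coordinates $(u,v)$ on the target to $xy,\,x+y$. The map $\phi^*:\Z[u,v]\to\Z[x,y]$ is injective because $u$ and $v$ are algebraically independent. Hence every submodule $\mathcal{N}^{\hom}_n\subset\Z[u,v]$ is $\phi$-admissible, and \eqref{eqn: tmorphism} gives $t_{\phi^*\mathcal{N}^{\hom}_n}(\tau)=t_{\mathcal{N}^{\hom}_n}(\phi\tau)$.

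\textbf{Step 1 (decomposition).} The key observation is that $u^av^b$ has degree $a+b$ in $x$ (and in $y$). So $\phi^*\mathcal{N}^{\hom}_n$, spanned by the $u^av^b$ with $a+b<n$, is exactly the module $S_n$ of symmetric polynomials of degree $<n$ in each variable. I then claim
\[ \mathcal{N}^{\rec}_{(n,n)} \;=\; \phi^*\mathcal{N}^{\hom}_n \;\oplus\; x\,\phi^*\mathcal{N}^{\hom}_{n-1}. \]
The inclusion $\supseteq$ is clear by degree count: $x\,u^av^b$ with $a+b\le n-2$ has $x$-degree at most $n-1$. The sum is direct by Lemma \ref{lem: isoZmodulesxyxplusy}, and the ranks match, $\binom{n+1}{2}+\binom{n}{2}=n^2$. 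Equality over $\Z$, rather than up to finite index, should follow from the surjectivity argument in that lemma, restricted by $x$- and $y$-degree. If only a finite index $C_n$ is obtained, I will check that $C_n^{1/n^3}\to 1$, which suffices by the torsion remark in \S\ref{sect: Suptransfinite}.

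\textbf{Step 2 (apply the direct-sum bound).} Take the exponent on the rectangular side $e_n=n^2(n-1)\sim n^3$, as in \eqref{exponent:rectangularcase}. Give each summand its homogeneous exponent \eqref{exponent:hom} with $r=2$, which is $\sim n^3/3$. Then $\alpha=\beta=\tfrac13$, and the growth condition \eqref{exponentgrowthcondition} holds since the rank is $n^2$ while the exponent is $n^3$. Proposition \ref{prop: directsums} then gives
\[ \ts^{\rec}_{(1,1)}(\tau)\le \ts_{\phi^*\mathcal{N}^{\hom}}(\tau)^{1/3}\,\ts_{x\phi^*\mathcal{N}^{\hom}}(\tau)^{1/3}. \]
By \eqref{eqn: functorial}, the first factor equals $\ts^{\hom}(\phi\tau)^{1/3}$.

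\textbf{Step 3 (remove the factor $x$).} Multiplying every generator by $x$ multiplies each row $i$ of the Vandermonde matrix by $x(z_i)$. Hence $t_{x\mathcal{M}}(\tau)\le (\sup_\tau|x|)^{\rank\mathcal{M}}\,t_{\mathcal{M}}(\tau)$. Since the rank is $O(n^2)$ and the exponent is $\sim n^3/3$, the prefactor tends to $1$ provided $\tau$ is bounded; this is the case of interest, and otherwise the right-hand side is presumably infinite anyway. The index shift from $n-1$ to $n$ does not affect the limit. Therefore $\ts_{x\phi^*\mathcal{N}^{\hom}}(\tau)\le\ts^{\hom}(\phi\tau)$, and combining the steps gives $\ts^{\rec}_{(1,1)}(\tau)\le\ts^{\hom}(\phi\tau)^{2/3}$.

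The main obstacle is Step 1: proving the decomposition exactly over $\Z$, with the degree filtrations matched. The remaining steps are routine limit bookkeeping.
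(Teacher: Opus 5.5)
Your proof is correct, but it takes a different route from the paper's. The paper treats the rectangular module as essentially a tensor product. With $\mathcal{P}_n=\phi^*\mathcal{N}^{\hom}_n$ and the stationary rank-two module $\mathcal{N}=\Z\oplus x\Z$, it uses Lemma \ref{lem: isoZmodulesxyxplusy} to compare $\mathcal{N}^{\rec}_{(n,n)}$ and $\mathcal{P}_n\otimes\mathcal{N}$ up to direct summands of rank $O(n)$. Applying the direct-sum bound in both directions gives the \emph{equality} $\ts^{\rec}_{(1,1)}(\tau)=\ts_{\mathcal{P}\otimes\mathcal{N}}(\tau)^{2/3}$. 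It then discards $\mathcal{N}$ via Corollary \ref{cor: stationarytp}, that is, via the amalgam determinant formula behind Theorem \ref{thm: upperboundtau}.

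You instead use the exact splitting $\mathcal{N}^{\rec}_{(n,n)}=\mathcal{P}_n\oplus x\mathcal{P}_{n-1}$; this is what the paper's identity $\mathcal{P}_n\otimes\mathcal{N}=\mathcal{M}_n\oplus\mathcal{K}_n$ encodes. You then apply only the Laplace-expansion bound of Proposition \ref{prop: directsums} with $\alpha=\beta=\tfrac13$, and remove the factor $x$ by row scaling. This is exactly the technique of the paper's next proposition (the maps $\phi_x,\phi_y$), and of its remark that Corollary \ref{cor: stationarytp} can be recovered from direct sums $\oplus_i \m_i\mathcal{M}$.

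What each route buys:
\begin{itemize}
\item Yours is more elementary. It needs neither the $(n_1n_2)!/H_{n_1,n_2}$ estimate nor the two-sided comparison.
\item The paper's isolates all the loss in the single tensor-product step. It also makes transparent that the exponent $\tfrac23$ is $\lim_n e^{\mathcal{P}\otimes\mathcal{N}}_n/e^{\rec}_n$, reflecting that $\Z[x,y]$ is free of rank $2$ over $\Z[xy,x+y]$.
\end{itemize}
Both proofs need $\tau$ bounded (the paper's through Corollary \ref{cor: stationarytp}), so your restriction in Step 3 costs nothing.

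The step you flag as the main obstacle closes in a few lines, with no torsion index at all.
\begin{itemize}
\item For $f\in\mathcal{N}^{\rec}_{(n,n)}$, Lemma \ref{lem: isoZmodulesxyxplusy} gives $f=P+xQ$ with $P,Q\in\Z[xy,x+y]$.
\item Since $P$ and $Q$ are symmetric, $f(x,y)-f(y,x)=(x-y)Q$. This has $x$-degree at most $n-1$, so $\deg_x Q\le n-2$, and hence $\deg_x P=\deg_x(f-xQ)\le n-1$.
\item The $x$-degree of $\sum c_{ab}(xy)^a(x+y)^b$ is exactly $\max\{a+b: c_{ab}\neq 0\}$. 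Indeed, the coefficient of $x^d$ for this maximum $d$ is $\sum_{a+b=d}c_{ab}y^a\neq 0$.
\end{itemize}
Hence $P\in\mathcal{P}_n$ and $Q\in\mathcal{P}_{n-1}$, which gives the decomposition over $\Z$ on the nose.
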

  \begin{proof} 
  Let us write $\mathcal{N} = \Z \oplus x \Z$ and  
 set $\mathcal{M}= \Z[x,y]$, $\mathcal{P}=\Z[xy,x+y]$,   equipped with the rectangular and homogeneous  filtrations, respectively:
\[  \mathcal{M}_n  =   \bigoplus_{0\leq i , j< n} x^i y^j \Z \ , \qquad \ , \qquad 
  \mathcal{P}_n  =  \bigoplus_{\substack{0\leq i,j \\  i+j< n}} (xy)^i (x+y)^j \Z \ .  \nonumber 
 \]
By lemma  \ref{lem: isoZmodulesxyxplusy} and the fact that the map $i$ respects the filtrations by degree in $x$ and $y$, we find that
\begin{equation}\label{inproofclaim:MisPtensN}  \mathcal{M}_n \cong   \left( \mathcal{P}_n \otimes \mathcal{N} \right)  \oplus \mathcal{C}_n \  \quad \hbox{ and } 
\quad \mathcal{P}_n \otimes \mathcal{N} = \mathcal{M}_{n} \oplus \mathcal{K}_n
\end{equation}
where $\mathcal{C}_n$ (resp.   $\mathcal{K}_n$) is the free $\Z$-module of rank $n$ with generators $x^i y^{n-1}$ for $0\leq i\leq n-1$ (resp.   
 generated  by $x  (xy)^i (x+y)^j$ for $i+j =n-1$). Recall from \eqref{exponent:rectangularcase} and \eqref{exponent:hom} that  
\[ e^{\mathcal{M}}_n =  n^2(n-1) \sim  n^3  \quad , \quad   e^{\mathcal{P}}_n =  \frac{n(n-1)(n-2)}{3} \sim \frac{n^3}{3} \ . \] 
If we define $e_n^{\mathcal{N}}=0$ then using the convention \eqref{etensorcanonical} we have 
\[ e^{\mathcal{P}\otimes \mathcal{N}}_n = e^{\mathcal{P}}_n \, \rank \,\mathcal{N}_n \sim \frac{2}{3} n^3\ .\] 
Since the total degrees of the determinants of $\mathcal{C}_n,\mathcal{K}_n$ are quadratic in $n$, we have
\[ t_{\mathcal{C}_n} (\tau )^{\frac{1}{e^{\mathcal{M}}_n}} \rightarrow  1 \  \quad \ , \quad t_{\mathcal{K}_n} (\tau)^{\frac{1}{e^{\mathcal{M}}_n}} \rightarrow 1    \qquad \hbox{ as } n\rightarrow \infty\ .  \]
It follows from  proposition \ref{prop: upperboundtaudirectsum}  applied  to each direct sum in   \eqref{inproofclaim:MisPtensN} that 
\[    \ts_{\mathcal{M}}(\tau) \leq  \ts_{\mathcal{P} \otimes \mathcal{N}}(\tau)^{\frac{2}{3}} \quad \hbox{ and } \quad  \ts_{\mathcal{P} \otimes \mathcal{N}}(\tau)  \leq  \ts_{\mathcal{M}}(\tau)^{\frac{3}{2}}   \  \]  
 and hence $ \ts_{\mathcal{M}}(\tau) =  \ts_{\mathcal{P} \otimes \mathcal{N}}(\tau)^{\frac{2}{3}} $. 
  Now, by the corollary to theorem \ref{thm: tsuptensorproductUB}  and the asymptotics of the exponent functions given above, we deduce that 
 \[  \ts_{\mathcal{P}\otimes \mathcal{N}}(\tau)  \leq   \ \ts_{\mathcal{P}} (\tau ) \ .  \]
 We conclude that 
\[ \ts_{\mathcal{M}}(\tau) =  \ts_{\mathcal{P} \otimes \mathcal{N}}(\tau)^{\frac{2}{3}} \leq     \ts_{\mathcal{P}}(\tau)^{\frac{2}{3}} \ . \] 
Since $\mathcal{P}= \phi^* \Z[u,v]$ where $\phi(u,v) = (xy, x+y)$,  the statement follows from \eqref{eqn: functorial}. 
 \end{proof}


  \begin{example}  Let  $\tau = [0,a]\times [0,b]$ where $a,b>0$. Then  $\phi(\tau)$ is (comfortably) contained in both the  rectangle  $[0,ab] \times [0,a+b]$, or a triangle with vertices $(0,0), (0, a+b), (\sqrt{ab}(a+b)/2,a+b)$. We obtain
  \[ \ts^{\rec}_{(1,1)}(\tau)  = \frac{\sqrt{ab}}{{4}} \leq  \left(\ts^{\mathrm{hom}}(\phi \tau) \right)^{\frac{2}{3}}\leq  \min \left\{  \frac{(ab)^{\frac{1}{3}} (a+b)^{\frac{1}{3}}}{4^{2/3}}  ,  \frac{(ab)^{\frac{1}{6}} (a+b)^{\frac{2}{3}}}{(8e^2)^{1/3}} \right\} \ , \] 
  by  \eqref{homogeneoustransfinitemultiplicative}, \eqref{eqn: trecsquarebox}  and \eqref{eqn: transfinitetriangle} respectively. The above is 
 consistent with the arithmetic-geometric mean inequality. The slippage in the constant in the denominator comes from the fact that the image of $\phi(\tau)$ lies in the smaller region satisfying $y\geq 4x^2$ (once more by the AM-GM inequality).
  \end{example}
  
  \begin{example} \label{ex: regiontauepsilonviatensor}  The image $\phi(\tau_{\epsilon})$  is contained in a quadrilateral with vertices $(0,0)$, $(0,1)$, $(\epsilon,1+\epsilon)$, $(\epsilon, 2 \sqrt{\epsilon})$. By applying the linear transformation 
  $(x,y) \mapsto (x, y-x)$, which has determinant $1$,  the latter  is transformed to  a region  $R_{\epsilon}$ which  fits inside a rectangle $[0,\epsilon] \times [0,1]$. Therefore we conclude that 
  \[  \ts^{\rec}_{(1,1)} (\tau_{\epsilon}) \leq \left(\ts^{\hom}(  [0,\epsilon] \times [0,1])\right)^{\frac{2}{3}} =  \left(\frac{\sqrt{\epsilon}}{4}\right)^{\frac{2}{3}} = 
 \left( \frac{\epsilon}{16}\right)^{\frac{1}{3}}
  \ . \]
The same region is  contained in a triangle $T_{\epsilon}$ with vertices $(0,0)$, $(0,1)$ and $(\frac{\epsilon}{2\sqrt{\epsilon}-\epsilon},1)$ and so 
  \[  \ts^{\rec}_{(1,1)} (\tau_{\epsilon}) \leq  \ts^{\hom}(  T_{\epsilon})^{\frac{2}{3}} =  \left( \frac{\mathrm{vol}(T_{\epsilon})}{2 e^2} \right)^{\frac{1}{3}} =\left(  \frac{\epsilon}{ 4 e^2 (2 \sqrt{\epsilon}-\epsilon) }\right)^{\frac{1}{3}}   \ , \]
  which is an improvement  on the previous bound in the range  $\epsilon>0.1042$.
  Nevertheless, these bounds can be quite crude for general values of $\epsilon$.
   The region $R_{\epsilon}$ constructed  above, which is contained in a quadrilateral with vertices $(0,0)$, $(0,1)$, $(\epsilon,1)$, $(\epsilon, 1+\epsilon-2\sqrt{\epsilon})$ contains the rectangle 
  with corners $(0, 1+\epsilon-2\sqrt{\epsilon})$, $(0,1)$, $(\epsilon,1)$,  $(\epsilon, 1+\epsilon-2\sqrt{\epsilon})$ and therefore we have a lower bound
  \[  \ts^{\hom}(R_{\epsilon})^{\frac{2}{3}} \geq   \left(\frac{ \epsilon (1-\sqrt{\epsilon})^2}{16} \right)^{\frac{1}{3}}  \ .   \
 \]
 This shows that  $ \ts^{\hom}(R_{\epsilon})   \sim    \frac{\sqrt{\epsilon}}{4}$ to leading order in $\epsilon$ as $\epsilon\rightarrow 0$.
    \end{example} 
  
  \subsection{Map to two rectangles (direct sums)}
  Consider the map 
  \begin{eqnarray}  (x,y)  \mapsto   ( xy, x ):   \C^2   \overset{\phi_x}{\To}  \C^2 \nonumber 
  \end{eqnarray} 
and similarly let $\phi_y: \C^2 \rightarrow \C^2$ be defined by $(x,y) \mapsto (xy,y )$.

  \begin{prop} For any bounded region $\tau \subset \C^2$, we have 
  \[ \ts^{\, \rec}(\tau)  \ \leq  \  \Big(\ts^{\, \hom} ( \phi_x(\tau) )\Big)^{\frac{1}{3}} \,  \Big(\ts^{\, \hom} ( \phi_y(\tau) )\Big)^{\frac{1}{3}}     \ . \] 
  \end{prop}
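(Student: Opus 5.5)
Following the proof of Proposition \ref{prop: mapxyxplusy}, the plan is to decompose the rectangular module $\mathcal{M}_n = \bigoplus_{0\leq i,j<n} x^iy^j\Z$ into two pieces. One piece is pulled back along $\phi_x$ and the other along $\phi_y$, and each piece is a homogeneous module in the target coordinates $(u,w)$. Concretely, every monomial $x^iy^j$ with $i\geq j$ equals $(xy)^j x^{i-j} = \phi_x^*(u^j w^{i-j})$, and every monomial with $i<j$ equals $(xy)^i y^{j-i} = \phi_y^*(u^i w^{j-i})$. We therefore set
\[ \mathcal{A}_n = \bigoplus_{0\leq j\leq i<n} x^iy^j \Z \ , \qquad \mathcal{B}_n = \bigoplus_{0\leq i< j<n} x^iy^j\Z \ , \qquad \mathcal{M}_n = \mathcal{A}_n \oplus \mathcal{B}_n \ . \]
Then $\mathcal{A}_n = \phi_x^* \mathcal{P}^{x}_n$, where $\mathcal{P}^x_n$ is spanned by the $u^aw^b$ with $a\geq 0$, $b\geq 0$ and $2a+b = i+j$, $a+b=i<n$. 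Equivalently, $\mathcal{P}^x_n$ consists of the monomials $u^aw^b$ with $a+b<n$, which is exactly the homogeneous module $\mathcal{N}^{\hom}_n$ in the variables $(u,w)$. The assignment $(a,b)\mapsto (i,j)=(a+b,a)$ is a bijection onto $\{0\leq j\leq i<n\}$, so $\phi_x$-admissibility is clear because distinct monomials are sent to distinct monomials. Similarly $\mathcal{B}_n = \phi_y^*\mathcal{Q}_n$, where $\mathcal{Q}_n$ is $\mathcal{N}^{\hom}_n$ with the monomials $u^a$ (i.e.\ $b=0$) removed. These differ from $\mathcal{N}^{\hom}_n$ by a module $\mathcal{C}_n$ of rank $n$ and determinantal degree $O(n^2)$, and this discrepancy is negligible.

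The next step is to invoke Proposition \ref{prop: directsums} (or Corollary \ref{cor: directsums}) with $e^{\mathcal{M}}_n = n^2(n-1)\sim n^3$ and with the exponents of the pieces chosen to be the homogeneous exponents $e^{\hom}_n \sim n^3/3$ from \eqref{exponent:hom}. This gives $\alpha=\beta=\frac13$, and hence
\[ \ts^{\rec}(\tau) \leq \ts_{\mathcal{A}}(\tau)^{1/3}\, \ts_{\mathcal{B}}(\tau)^{1/3} \ . \]
Functoriality \eqref{eqn: functorial} then gives $\ts_{\mathcal{A}}(\tau)=\ts^{\hom}(\phi_x\tau)$ with the exponent $e^{\hom}_n$. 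The proposition only needs ratios of exponents to converge, not the canonical additivity, so using $e^{\hom}$ for the pieces is legitimate.

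For the $\mathcal{B}$ piece, Proposition \ref{prop: upperboundtaudirectsum} applied to $\mathcal{N}^{\hom}_n = \mathcal{Q}_n\oplus \mathcal{C}_n$ does not directly bound $t_{\mathcal{Q}_n}$ from above by $t_{\mathcal{N}^{\hom}_n}$, since it runs the wrong way. So I would instead apply the direct-sum proposition to $\mathcal{M}_n$ split into three pieces, $\mathcal{A}_n\oplus\mathcal{B}'_n\oplus \mathcal{C}'_n$. Here I shift the split so that the $\mathcal{A}$ part is $\{j\leq i\}$, the $\mathcal{B}'$ part is $\{j\geq i\}$, and the diagonal overlap is removed from one side. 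More simply, I would handle it by noting that replacing $\mathcal{Q}$ by a module differing in rank $O(n)$ and degree $O(n^2)$ changes $\ts$ by a factor tending to $1$ when $\tau$ is bounded, exactly as in the treatment of $\mathcal{C}_n,\mathcal{K}_n$ in Proposition \ref{prop: mapxyxplusy}. The cleanest version is to take $\mathcal{B}_n$ to be $\{i\leq j\}$ so that it is exactly $\phi_y^*\mathcal{N}^{\hom}_n$. In that case $\mathcal{A}_n$ is $\{i>j\}$, which is $\phi_x^*$ of $\mathcal{N}^{\hom}_n$ minus the pure powers $u^a$. Then $\mathcal{N}^{\hom}_n=\mathcal{A}''_n\oplus\mathcal{C}_n$, and the issue is the same as before.

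The main obstacle is therefore this boundary correction: Proposition \ref{prop: upperboundtaudirectsum} bounds the sum by the parts, but here I need to bound a part by the sum. I would resolve it with a Laplace-expansion argument that uses boundedness. Choose points $z_{N'+1},\dots,z_{N}$ in $\tau$ realizing a nonzero complementary minor of $\mathcal{C}_n$. Expanding $\det V_{\mathcal{N}^{\hom}_n}$ with $z_1,\dots,z_{N'}$ free is not clean, so the alternative I would try first is to give up exactness. I would write $\mathcal{M}_n \subset \phi_x^*\mathcal{N}^{\hom}_n \oplus \phi_y^*\mathcal{N}^{\hom}_n$, with the diagonal counted twice, and bound $t_{\mathcal{M}_n}$ via the supremum over a Cauchy--Binet type expansion of a rectangular matrix. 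The extra $n$ columns contribute at most $\binom{2N}{N}\cdot \sup|\cdot|^{O(n^2)}$, which is harmless after raising to the power $1/n^3$ because $\tau$ is bounded.
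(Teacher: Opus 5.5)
Your architecture matches the paper's. You split $\Z[x,y]^{\rec}_n$ into a $\phi_x$-piece and a $\phi_y$-piece, apply the direct-sum bound with weights $\frac13$, and conclude by functoriality \eqref{eqn: functorial}. The paper also splits off the diagonal $\Z[xy]_n$ as a summand of weight $0$; absorbing it into $\mathcal{A}_n=\phi_x^*\mathcal{N}^{\hom}_n$, as you do, is fine.

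The gap is the step you flag yourself and never close: an \emph{upper} bound for $t_{\mathcal{B}_n}(\tau)$. Calling the discrepancy $\mathcal{C}_n$ ``negligible'' has no justification. Nothing bounds $t$ of a direct summand above by $t$ of the whole module. For instance, if $\tau$ has at least $\rank\,\mathcal{Q}_n$ but fewer than $\rank\,\mathcal{N}^{\hom}_n$ points, then $t_{\mathcal{N}^{\hom}_n}(\tau)=0$ while $t_{\mathcal{Q}_n}$ need not vanish. The $\mathcal{C}_n,\mathcal{K}_n$ argument in Proposition~\ref{prop: mapxyxplusy} likewise only ever bounds a whole module by its summands. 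It gets two-sided control there only because each of the two modules contains the other as a summand up to a small complement.

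Your fallback fails as well. The images of $\phi_x^*\mathcal{N}^{\hom}_n$ and $\phi_y^*\mathcal{N}^{\hom}_n$ both contain the $n$ functions $(xy)^a$, so their sum is not free. Any square Vandermonde matrix built from both therefore has repeated columns and vanishes identically, and Proposition~\ref{prop: upperboundtaudirectsum} gives nothing. More generally, any Laplace expansion of the $n^2\times n^2$ determinant $\det V_{\mathcal{M}_n}$ along a $\phi_x$/$\phi_y$ column split has a block of rank at most $n^2/2<\binom{n+1}{2}$. So some truncated block must be bounded from above, and the factor $\binom{2N}{N}\sup|\cdot|^{O(n^2)}$ you claim is not derived from anything.

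The missing idea is a one-line factorisation, which is exactly what the paper uses for $x\mathcal{N}^x_n$ and $y\mathcal{N}^y_n$. For $i<j$ one has $x^iy^j=y\cdot(xy)^i\,y^{j-i-1}$ with $i+(j-i-1)<n-1$. Hence $\mathcal{B}_n=y\,\phi_y^*\mathcal{N}^{\hom}_{n-1}$; in your coordinates, $\mathcal{Q}_n=w\,\mathcal{N}^{\hom}_{n-1}$. It follows that $|\det V_{\mathcal{B}_n}(z)|=|y_1\cdots y_{N'}|\,|\det V_{\phi_y^*\mathcal{N}^{\hom}_{n-1}}(z)|$ with $N'=\binom{n}{2}$, so
\[ t_{\mathcal{B}_n}(\tau)\leq \bigl(\sup_{\tau}|y|\bigr)^{\binom{n}{2}}\, t_{\mathcal{N}^{\hom}_{n-1}}(\phi_y\tau) \ . \]
Take $e^{\mathcal{B}}_n=e^{\hom}_{n-1}\sim n^3/3$. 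Since $\tau$ is bounded and $\binom{n}{2}=o(n^3)$, the prefactor tends to $1$ after raising to the power $1/e^{\mathcal{B}}_n$, giving $\ts_{\mathcal{B}}(\tau)\leq \ts^{\hom}(\phi_y\tau)$. Your direct-sum step with $\alpha=\beta=\frac13$ then closes the proof.
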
 

  \begin{proof}
The $\Z$-module spanned by $x^i y^j$ for $0\leq i,j< n$ is a direct sum:
\[ \Z[x,y]^{\rec}_n = \Z[xy]_{n}  \oplus x\, \mathcal{N}_n^x \oplus y\, \mathcal{N}_n^y \qquad \hbox{ where } \quad 
 \mathcal{N}_n^x=    \Z[ xy,x]^{\hom}_{n-1} \quad \   , 
\quad  \mathcal{N}_n^y=  \Z[ xy,y]^{\hom}_{n-1} \ . \] 
 Recall that 
 \[  e^{\Z[x,y]^{\rec}}_n = n^2(n-1) \quad \ ,\quad  \  e^{\Z[xy]}_{n} = \frac{n(n-1)}{2} \quad  \ , \quad  e^{\Z[xy,x]^{\hom}}_{n-1}=  \frac{n(n-1)(n-2)}{3}\ \ , \]
 and similarly for $e^{\Z[y, xy]^{\hom}}$. 
 It follows that 
 \[ \lim_{n\rightarrow \infty}  \frac{e^{\Z[xy]}_{n}   }{   e^{\Z[x,y]^{\rec}}_n} =0   \quad  \ , \quad   \lim_{n\rightarrow \infty} \frac{e^{\Z[ xy,x ]^{\hom}}_{n-1}}{e^{\Z[x,y]^{\rec}}_n }  = \lim_{n\rightarrow \infty} \frac{e^{\Z[ xy,y ]^{\hom}}_{n-1}}{e^{\Z[x,y]^{\rec}}_n }   = \frac{1}{3} \ .  \]
 Since $\det |V_{x\mathcal{N}_n^x}(z_1,\ldots, z_N)| = |x_1\ldots x_N|  \det |V_{x\mathcal{N}_n^x}|$ and since $\tau$ is bounded, it follows from  an argument detailed previously that we may replace $x \mathcal{N}^x_n$  by $\mathcal{N}^x_n$, and similarly for $y \mathcal{N}^y_n$.
  We conclude that 
 \begin{eqnarray} \ts^{ \rec} (\tau)  &\leq&   \left(\ts_{\Z[xy]} (\tau)\right)^0  \left(\ts_{\Z[xy,x]^{\hom}} (\tau)\right)^{\frac{1}{3}}   \left(\ts_{\Z[xy,y]^{\hom}} (\tau)\right)^{\frac{1}{3}}    \nonumber  \\
 & = &   \left(\ts^{\hom} (\phi_x(\tau))\right)^{\frac{1}{3}}   \left(\ts^{\hom} (\phi_y(\tau))\right)^{\frac{1}{3}}      \nonumber 
 \end{eqnarray} 
 where the first line follows from corollary  \ref{cor: directsums} and the second by  \eqref{eqn: functorial}. 
  \end{proof} 
  
  \begin{example}
Applying the proposition to  $\tau_{\epsilon}$, we see that $\phi_x( \tau_{\epsilon})$ is contained in the rectangle $[0,\epsilon] \times [0,1]$ (more precisely in  the slightly smaller  region where $x\leq y$), and hence
  \[    \ts^{\rec}(\tau_{\epsilon})   \leq     \ts^{\hom} ( [0,1] \times [0,\epsilon])^{\frac{2}{3}} =   \ \left(\frac{\epsilon}{16}\right)^{\frac{1}{3}}\]
  which is exactly the same  as the first inequality in example \ref{ex: regiontauepsilonviatensor}.
    \end{example}
      
    \begin{rem}
    There  exist various generalisations of this proposition to higher dimensions. The typical shape of the regions arising from moduli space integrals \cite{DinnerParties} are
    of a similar form and typically contained in $\{(x_1,\ldots, x_r) \in [0,1]^r:  \prod_{i\in I} x_i \leq \epsilon_I\}$ for some  $\epsilon_I>0$  for every subset $I\subset \{1,\ldots, r\}$.

  \end{rem}

  \section{Small linear forms from the geometry of numbers} \label{sect: smalllinearforms}

  \subsection{Minkowski's theorem on linear forms}
      
  \begin{thm} Let $A\in M_{n\times n}(\R)$ be a matrix with real entries $(A)_{ij} = a_{ij}$. Then there exists a non-zero integer vector $\underline{c}=(c_1,\ldots, c_n)^T \in \mathbb{Z}^n \setminus \{0\}$ such that 
  \[  \left| \sum_{j=1}^n a_{ij} c_j \right| \leq \varepsilon_i  \, \hbox{ for all } 1\leq i \leq n \ ,  \] 
  provided that $\prod_{i=1}^n \varepsilon_i \geq |\det(A)|$. 
  \end{thm}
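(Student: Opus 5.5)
This is Minkowski's linear forms theorem, and I will derive it from Minkowski's convex body theorem: a convex, centrally symmetric, compact set $K\subset\R^n$ with $\mathrm{vol}(K)\geq 2^n$ contains a non-zero point of $\Z^n$. Throughout, $\varepsilon_i>0$ is implicit.

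\emph{Non-degenerate case.} Suppose first that $\det(A)\neq 0$. Consider the parallelepiped
\[ K=\{x\in\R^n : |(Ax)_i|\leq \varepsilon_i \hbox{ for all } 1\leq i\leq n\}=A^{-1}\Big(\prod_{i=1}^n[-\varepsilon_i,\varepsilon_i]\Big). \]
It is compact, convex and centrally symmetric. Its volume is
\[ \mathrm{vol}(K)=2^n\prod_{i=1}^n\varepsilon_i\,/\,|\det A|, \]
which is at least $2^n$ precisely when $\prod_i\varepsilon_i\geq|\det A|$. The convex body theorem then gives a non-zero $\underline c\in K\cap\Z^n$, which is what we want.

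\emph{Degenerate case.} If $\det(A)=0$, then $K$ is an unbounded closed convex symmetric set containing a slab around $\ker A$, so it has infinite volume. Its intersection with a large centred cube is compact, convex and symmetric, with volume at least $2^n$, so the same argument applies. (In the paper's application $\det A\neq 0$ anyway.)

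\emph{The convex body theorem itself.} If it is not cited as standard, I will prove it by Blichfeldt's argument. For the strict version, when $\mathrm{vol}(K)>2^n$, the set $\tfrac12K$ has volume greater than $1$. Its images in $\R^n/\Z^n$ therefore overlap, giving $x\neq y$ in $\tfrac12K$ with $x-y\in\Z^n$. By symmetry and convexity, $x-y=\tfrac12(2x)+\tfrac12(-2y)\in K$.

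\emph{The equality case (main obstacle).} When $\mathrm{vol}(K)=2^n$ exactly, apply the strict version to $(1+1/k)K$ for each $k\geq 1$. This gives non-zero lattice points $\underline c_k$. They all lie in the bounded set $2K$, so only finitely many lattice points occur. Hence some non-zero point appears for infinitely many $k$, and it therefore lies in $\bigcap_k(1+1/k)K=K$ because $K$ is closed. This compactness step is the only delicate point of the proof.
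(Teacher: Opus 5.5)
Your proof is correct. It is the standard derivation of Minkowski's linear forms theorem from the convex body theorem, including the Blichfeldt step and the compactness argument for the case $\mathrm{vol}(K)=2^n$; the paper gives no proof of its own and simply quotes the result as classical. You are also right to make $\varepsilon_i>0$ explicit, since the statement as printed fails without it, e.g.\ for $n=2$, $A$ with rows $(1,\sqrt{2})$ and $(0,0)$, $\varepsilon_1=0$, $\varepsilon_2=1$.
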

  In particular, if $\det(A)\neq 0$ there exists some  $1\leq i \leq n$  such that 
  \begin{equation} \label{MinkowskiCorollary}  0<  \left|  \sum_{j=1}^n a_{ij} c_j \right| \leq  |\det(A)|^{1/n}  \ 
  \end{equation}  
  which follows from the theorem on setting $\varepsilon_i =    |\det(A)|^{1/n} $ for all $i$. 
  By applying  this to a matrix $Q^{\sigma}_N$ of Mellin transforms we can deduce the existence of small linear forms in periods.
  One can also use Minkowski's theorem on short vectors for quadratic forms to obtain a similar bound, but the previous theorem has the advantage that it does not require $A$ to be symmetric nor positive definite.

  \subsection{Denominators}  \label{sect: Denominators}
  Let $(X,f)$ be a family of Mellin integrals as considered earlier and let us fix once and for all a basis $\xi_1,\ldots, \xi_m$ of periods as in lemma \ref{lem: periodsxi}. 
  Let    $Q^{\sigma}_N$ denote the matrix of Mellin integrals with respect to $\mathcal{M} = f^* \mathcal{N} \subset \mathcal{O}(V_f)$ free of rank $N$, where $\mathcal{N} \subset \Or(\mathbb{A}^r)$ is free and $f$-admissible.

  Let us choose invertible matrices $D^{\ell}_N, D^{r}_N \in \mathrm{GL}_{N}(\Q)$ such that 
  \begin{equation} \label{DDprime matrices}  \widetilde{Q}^{\sigma}_N = D^{\ell}_N Q^{\sigma}_N D^r_N  \quad \in \quad M_{N\times N} ( \Z[\xi_1,\ldots, \xi_m])
  \end{equation} 
  has entries which are \emph{integral} linear forms in the $\xi_i$.  Let 
  \[ \delta_N = |\det (D^{\ell}_N) \det(D^r_N)|\ .\]
  \begin{thm}  \label{thm: smalllinearform} If   $N$ is sufficiently large and $\det Q^{\sigma}_N \neq 0$,  then  there exists a non-zero linear form in the $\xi_1,\ldots, \xi_m$, which is a rational linear combination of Mellin integrals $I(a_1,\ldots, a_n)$,  such that 
  \[  0 <  \left| \sum_{i=1}^m  n_i \xi_i \right| \leq  \left( t_{\mathcal{N}}(f  \sigma )^2 \, \delta_N\right)^{\frac{1}{N}}   
\]   
where $n_i \in \Z$ and $t_{\mathcal{N}}(f \sigma)$ was defined in \eqref{eqn: tNdef}. 
  \end{thm}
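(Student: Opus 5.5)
The plan is to apply Minkowski's theorem on linear forms, in the form \eqref{MinkowskiCorollary}, to the matrix $A = \widetilde{Q}^{\sigma}_N = D^{\ell}_N Q^{\sigma}_N D^r_N$. Its entries are real numbers, because they are integral linear forms in the periods $\xi_1,\ldots,\xi_m$. Since $D^{\ell}_N, D^r_N$ are invertible and $\det Q^{\sigma}_N\neq 0$, we have $\det A = \det D^{\ell}_N \det Q^{\sigma}_N \det D^r_N \neq 0$. Hence \eqref{MinkowskiCorollary} gives a nonzero integer vector $\underline{c}$ and an index $i$ such that
\[ 0 < \Big|\sum_j A_{ij} c_j\Big| \leq |\det A|^{1/N} = \big(\delta_N\,|\det Q^{\sigma}_N|\big)^{1/N}. \]

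The next step is to identify this quantity as a linear form of the required shape. Each entry $A_{ij}$ lies in $\Z\xi_1+\ldots+\Z\xi_m$, so $\sum_j A_{ij}c_j = \sum_k n_k\xi_k$ with $n_k\in\Z$. Moreover, $A_{ij}$ is a $\Q$-linear combination of entries $\int_\sigma \ff_a\ff_b\,\omega$ of $Q^{\sigma}_N$. Each $\ff_a\ff_b = f^*(\m_a\m_b)$ is a polynomial in $f_1,\ldots,f_r$, so each such entry is a $\Q$-linear combination of Mellin integrals $I(a_1,\ldots,a_r)$ at nonnegative integer arguments. Therefore the linear form is a rational combination of Mellin integrals, and it is nonzero by the strict inequality above.

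It then remains to bound $|\det Q^{\sigma}_N|$. I would invoke Corollary \ref{cor: detQNBoundTr}: for $N$ larger than a constant depending only on $\int_\sigma\omega$, it gives $|\det Q^{\sigma}_N| < t_{\mathcal{M}}(\sigma)^2$ with $\mathcal{M}=f^*\mathcal{N}$. Since $\mathcal{N}$ is $f$-admissible, \eqref{eqn: tmorphism} gives $t_{f^*\mathcal{N}}(\sigma) = t_{\mathcal{N}}(f\sigma)$. Substituting these into the Minkowski bound yields
\[ 0<\Big|\sum_k n_k\xi_k\Big| \leq \big(t_{\mathcal{N}}(f\sigma)^2\,\delta_N\big)^{1/N}. \]

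The only delicate point is bookkeeping rather than a genuine obstacle. One has to confirm that $\det Q^{\sigma}_N\neq0$ is exactly what makes the strict positivity in \eqref{MinkowskiCorollary} available; under (P1)--(P3) this is supplied by Lemma \ref{lem: posdefinite}. One also has to confirm that the phrase ``$N$ sufficiently large'' in the statement is precisely the hypothesis of Corollary \ref{cor: detQNBoundTr}, which comes from the Stirling estimate $(N!)^{1/N} > |\int_\sigma\omega|$.
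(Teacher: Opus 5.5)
Your proposal is correct and follows the paper's proof essentially step for step. You apply \eqref{MinkowskiCorollary} to $\widetilde{Q}^{\sigma}_N = D^{\ell}_N Q^{\sigma}_N D^r_N$, bound $|\det Q^{\sigma}_N|^{1/N}$ by $t_{\mathcal{M}}(\sigma)^{2/N}$ via Corollary~\ref{cor: detQNBoundTr}, identify $t_{\mathcal{M}}(\sigma)=t_{\mathcal{N}}(f\sigma)$ via \eqref{eqn: tmorphism}, and read off the integer coefficients of the $\xi_i$; you also make explicit why the resulting form is a rational combination of Mellin integrals, which the paper leaves implicit.
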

  \begin{proof} Apply \eqref{MinkowskiCorollary} to $\widetilde{Q}^{\sigma}_N$ to deduce the existence of a non-zero integer   vector $0\neq \underline{c}=(c_1,\ldots, c_N)^T\in \Z^N$  and an index $1 \leq i \leq N$ such that 
  \[ 0 < \left|  (\widetilde{Q}^{\sigma}_N \underline{c})_i \right| <   \left| \det (\widetilde{Q}^{\sigma}_N) \right|^{\frac{1}{N}}  =  |\det(Q^{\sigma}_N)|^{\frac{1}{N}} \delta_N^{\frac{1}{N}} \] 
  and apply  corollary  \ref{cor: detQNBoundTr}  which states that $ |\det(Q^{\sigma}_N)|^{\frac{1}{N}}< t_{\mathcal{M}}(\sigma)^{\frac{2}{N}}$ for $N$ sufficiently large. By \eqref{eqn: tmorphism}, 
  we have $t_{\mathcal{M}}(\sigma) =  t_{\mathcal{N}}(f \sigma )$. 
   Finally, use the fact that the entries of $\widetilde{Q}^{\sigma}_N$ are integer linear combinations of $\xi_1,\ldots, \xi_m$ to write the $i^{\mathrm{th}}$ entry of  $\widetilde{Q}^{\sigma}_N \underline{c}$ as   $\sum_{i=1}^m  n_i \xi_i$ for some integers $n_i$.
  \end{proof}
  Under the additional assumptions (P1) -(P3),  $\det Q_N^{\sigma}\neq 0$ holds automatically, by lemma \ref{lem: posdefinite}. 
    Now consider the set up  of \S \ref{sect: setupexponentfunctions}, i.e.,  $\mathcal{N} = \varinjlim_n \mathcal{N}_n$ and $e^{\mathcal{N}}_n \in 
    \mathbb{N}$ is an exponent function. Define
  \[ \delta_{\mathcal{N},e} = \limsup_N  \delta_N^{\frac{1}{e^{\mathcal{N}}_n}} \]
  which we recall  depends on the choices of matrices $D^{\ell}_N, D^r_N$ as above. Henceforth we drop the dependence on $e$ from the subscript (as in remark \ref{rem: noe}).

  \subsection{Irrationality criterion} 
  \begin{cor}
Suppose that $\det\, Q_N^{\sigma} \neq 0$ for infinitely many values of $N$. If 
 \[    \ts^2_{\mathcal{N}}(f \sigma ) \,   \delta_{\mathcal{N}}   <1 \]  
  then $\dim_{\Q} (\Q \xi_1 \oplus \ldots \oplus \Q \xi_r) \geq 2$.  In particular, if $r=2$ and $\xi_1=1$, then we retrieve criterion \ref{intro: limitcrit}. \end{cor}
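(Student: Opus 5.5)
We argue by contradiction. Suppose $\dim_{\Q}(\Q\xi_1+\ldots+\Q\xi_m)\leq 1$. The $\xi_i$ are real numbers (by (P1), (P2), or simply because the Mellin integrals take real values), so they all lie in $\Q\xi$ for a single real number $\xi$. Write $\xi_i=(p_i/q)\,\xi$ with $p_i\in\Z$ and a common denominator $q\geq 1$. Then any integral linear form $\sum_i n_i\xi_i$ lies in $\frac{1}{q}\Z\,\xi$. So either it vanishes, or its absolute value is at least $|\xi|/q=:c>0$. If $\xi=0$, every such linear form vanishes, which already contradicts the existence of the non-zero form produced below.

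Next we apply Theorem \ref{thm: smalllinearform} along the subsequence of indices $n$ for which $\det Q^{\sigma}_{N}\neq 0$, where $N=N_n=\rank\,\mathcal{N}_n$ and $n$ is large. For each such $n$ it gives integers $n_i$ with
\[ 0<\Big|\sum_i n_i\xi_i\Big|\leq \left(t_{\mathcal{N}_n}(f\sigma)^2\,\delta_{N}\right)^{1/N}. \]
Combined with the gap above, this yields $c^{N}\leq t_{\mathcal{N}_n}(f\sigma)^2\delta_{N}$ for all such $n$. We raise both sides to the power $1/e^{\mathcal{N}}_n$ and take the limit supremum along this subsequence. The right-hand side is bounded by $\limsup_n \big(t_{\mathcal{N}_n}^{1/e_n}\big)^2\cdot\limsup_n\delta_N^{1/e_n}=\ts_{\mathcal{N}}(f\sigma)^2\,\delta_{\mathcal{N}}$, since the limit supremum of a product of non-negative sequences is at most the product of the limit suprema (provided neither is infinite, which holds under the hypothesis). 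Passing to a subsequence only decreases a limit supremum. The left-hand side is $c^{N_n/e_n}$. Using the growth condition \eqref{exponentgrowthcondition}, or even just $N_n/e_n\to 0$, this tends to $1$. We therefore get $1\leq \ts^2_{\mathcal{N}}(f\sigma)\,\delta_{\mathcal{N}}$, contradicting the hypothesis. Note the direction: if $c<1$ then $c^{N/e}\to 1$ from below, and it is exactly $N/e\to 0$ that makes this work.

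For the final sentence, take $m=2$ and $\xi_1=1$. Then the dimension of $\Q+\Q\xi_2$ being at least $2$ means exactly that $\xi_2\notin\Q$, which is criterion \ref{intro: limitcrit}. In the variant where $\xi_i\in\Q[\xi]$ we would reduce similarly, though this reduction is not needed for the statement as given.

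The main subtlety is bookkeeping rather than ideas. First, the limit superior must be taken only along the subsequence where $\det Q^{\sigma}_N\neq 0$. Second, the hypothesis $N_n/e^{\mathcal{N}}_n\to 0$ must be secured; it is implied by \eqref{exponentgrowthcondition}, which we assume for the chosen exponent function. Third, the indexing $\delta_N$ versus $e_n$ in the definition of $\delta_{\mathcal{N}}$ must be matched. Finally, the statement writes $\xi_r$ where $\xi_m$ is meant, and we read it that way.
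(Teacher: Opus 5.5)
Your argument is correct and is the deduction the paper leaves implicit (it gives no separate proof): the corollary is read off from Theorem \ref{thm: smalllinearform} and the definitions of $\ts_{\mathcal{N}}$ and $\delta_{\mathcal{N}}$, with $N_n/e^{\mathcal{N}}_n\to 0$ from \eqref{exponentgrowthcondition} turning the $1/e_n$-normalised hypothesis into non-zero integral linear forms that tend to zero, which is impossible when $\dim_{\Q}\leq 1$. Your appeal to the reality of the $\xi_i$ is harmless but unnecessary, since $\frac{1}{q}\Z\,\xi$ is discrete in $\C$ for any $\xi$.
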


 The following quantity enables us to measure the quality of approximations  independently  of any choice of exponent function.
 \begin{defn}
 Let $\mathcal{N}$ be of rank $N$, and  $Q^{\sigma}_N$, $\delta_N$ be defined as   in \S \ref{sect: Denominators}.   
 Define the \emph{threshold}  to be:
 \begin{equation} \label{thetaNdef}  \vartheta_N = - \frac{  \log \det(Q^{\sigma}_N)}{\log \delta_N} \end{equation} 
Note that $\vartheta_N >1$ if and only if $\det(Q^{\sigma}_N) \delta_N <1$. 
 \end{defn} 
 

  \subsection{Denominators}
 We may always find $D^{\ell}_N, D^r_N$  satisfying \eqref{DDprime matrices} by taking, for example, $D^r_N$ to be the identity and $D^{\ell}_N$ to be the diagonal matrix whose $(i,i)$th  entry is the least common multiple of the denominators of the elements in the $i$th row of $Q^{\sigma}_N$. 
 This can be improved by taking $D^r_N$ to be the diagonal matrix whose $j^{\mathrm{th}}$ entry is the  inverse of the greatest common divisor of the coefficients of all entries in the $j^{\mathrm{th}}$ column of the matrix $D^{\ell}_N Q^{\sigma}_N \in M_{N\times N}(\Z[\xi_i])$, which has integral entries. 
 
 In the examples, we find that certain rows and columns of $Q^{\sigma}_N$  satisfy congruences modulo primes, which leads to further reductions in the prime factors dividing $\delta_N$. 
 The question of finding optimal denominator bounds is an interesting one which will not be pursued further here.

  \begin{example}
  Consider  the example of \S\ref{intro: exampleintuitive}.  The integer $N$ implicitly depends on $n$, which we omit from the notation. Let $D_N^{\ell}$ be the diagonal matrix whose entry  in the  $i^{\mathrm{th}}$ row is the least common multiple of the denominators of the elements in row $i$ of the matrix $Q^{\sigma}$, and let $D_N^r$ be the identity.

    \begin{lem} \label{lem: HardDenomBound} The determinant of $D^{\ell}_N$ satisfies
    \[\det(D^{\ell}_N) \sim \exp \left( w \frac{2r+1}{r+1} n^{r+1} \right)   \qquad \hbox{ as } n \rightarrow \infty\  \]
     \end{lem}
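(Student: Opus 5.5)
Since $D^{\ell}_N$ is diagonal, its determinant is the product over rows of the denominators, so the plan is to identify each row's denominator, take logarithms, and turn the sum into an integral.

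Rows and columns of $Q^{\sigma}$ are indexed by monomials $x^{\underline{i}}=x_1^{i_1}\cdots x_r^{i_r}$ with $0\le i_k\le n$, since we use the rectangular module $\mathcal{N}_n$ of \S\ref{intro: exampleintuitive}. The $(\underline{i},\underline{j})$ entry is $I(\underline{i}+\underline{j})$. By \eqref{intuitivedenom} its denominator divides $d_{\underline{i}+\underline{j}}^w$, where
\[ d_{\underline{i}+\underline{j}}=\max_k \mathrm{lcm}\{1,\ldots,i_k+j_k\}. \]
Every $\mathrm{lcm}\{1,\ldots,m\}$ with $m\le M$ divides $\mathrm{lcm}\{1,\ldots,M\}$. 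Hence the lcm of the denominators along row $\underline{i}$ divides $d_{\max_k i_k+n}^{\,w}$: taking $j_k=n$ gives the largest argument, which is $\max_k i_k+n$. We take this quantity as the asymptotically sharp value of the $(\underline{i},\underline{i})$ entry of $D^{\ell}_N$. This is how the statement "$\sim$" is read: as an upper bound, which is all the irrationality criterion requires, and it is attained when the generic denominators are as large as \eqref{intuitivedenom} allows.

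By the prime number theorem, $\log \mathrm{lcm}\{1,\ldots,M\}=\psi(M)\sim M$. This gives
\[ \log\det D^{\ell}_N \sim w\sum_{0\le i_1,\ldots,i_r\le n}\Bigl(n+\max_k i_k\Bigr). \]
We replace the sum by an integral over the cube, $n^{r+1}\int_{[0,1]^r}(1+\max_k u_k)\,du$. The error is $O(n^r)$ per unit, hence negligible against $n^{r+1}$. The maximum of $r$ independent uniform variables has density $r t^{r-1}$, so its mean is $\int_0^1 t\cdot rt^{r-1}\,dt=\tfrac{r}{r+1}$. Therefore
\[ \log\det D^{\ell}_N\sim w\,n^{r+1}\Bigl(1+\frac{r}{r+1}\Bigr)=w\,\frac{2r+1}{r+1}\,n^{r+1}, \]
which is the claimed asymptotic.

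The main obstacle is justifying the passage from $\psi$ to a linear function inside the sum, uniformly in the index. Here $\psi(M)=M+o(M)$ with $M\in[n,2n]$, so the error is $o(n)$ uniformly. Summed over the $(n+1)^r$ indices this gives $o(n^{r+1})$, which suffices. A secondary subtlety is the interpretation of "$\sim$" as a logarithmic asymptotic, i.e. equality of $\log\det D^{\ell}_N/n^{r+1}$ in the limit. That is the only form used afterwards, when deriving \eqref{intro: generalSupbound} with $e_n\sim\tfrac r2 n^{r+1}$.
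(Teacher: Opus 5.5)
Your proposal is correct and follows the paper's proof essentially verbatim. The steps are the same: bound each row's lcm by $\mathrm{lcm}\{1,\ldots,n+\max_k i_k\}^w$, use $\log \mathrm{lcm}\{1,\ldots,M\}\sim M$, and convert the sum to $\int_{[n,2n]^r}\max_k x_k\,dx=\frac{2r+1}{r+1}n^{r+1}$; your mean-of-the-maximum computation is the paper's $r\int_n^{2n}x(x-n)^{r-1}\,dx$ in disguise. Your extra remarks (uniformity of the prime number theorem error, reading $\sim$ logarithmically, and treating the row value as an upper bound for the true denominators) only make explicit what the paper leaves implicit.
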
 
     
     \begin{proof}
  The rows of $Q^{\sigma}$ are indexed by tuples $(n_1,\ldots, n_r)$ where $0\leq n_1,\ldots, n_r \leq n$. The least common multiple of the  denominators of all elements in that row is: 
  \[ (\mathrm{lcm} \{1,\ldots, m\})^w \quad \hbox{ where }  m = \max \{ n+n_1,n+n_2,\ldots, n+n_r\} \]
 The determinant of $D^{\ell}_N$ is the product over all row-indices. Since $\mathrm{lcm}\{1,\ldots, M\} \sim e^M$, 
 \[ \log D^{\ell}_N \sim \sum_{0\leq n_1,\ldots, n_r \leq n}   w\, \max \{ n+n_1,n+n_2,\ldots, n+n_r\} \sim  w  \frac{2r+1}{r+1} n^{r+1}\ . \]
  To verify the latter asymptotic, we check that
  \[ \int_{[n,2n]^r}  \max \{x_1,\ldots, x_r\} \, dx_1 \ldots dx_r =  r \int_{n}^{2n} x_r dx_r   \left(\prod_{i=1}^{r-1} \int_{n}^{x_r} dx_i \right)  =  r \int_{n}^{2n} x_r  (x_r-n)^{r-1} dx_r  
  =
     \frac{2r+1}{r+1} n^{r+1} \ . \]
  \end{proof}
 The rectangular exponent function \eqref{exponent:rectangularcase} has the asymptotic $e^{\mathcal{M}}_n \sim \frac{r}{2} n^{r+1}$, and hence 
  \begin{equation} \label{deltaboundforintroexample} \delta_{\mathcal{N}}  \leq  \exp \left( \frac{2w}{r} \frac{2r+1}{r+1}  \right) \  .\end{equation}
  \end{example}
 \begin{rem}
  A referee asked the excellent question of whether the matrices $D_n$ can be designed to make use of the cancellation of primes in the method of Rhin-Viola. I do not know the answer to this question, but the numerical experiments to be discussed later clearly demonstrate a large degree of `prime cancellation' of possibly a different nature. It would be very interesting to investigate this further. 
 \end{rem} 
 
  \subsection{Irrationality criterion via the determinant}
  The irrationality  criterion above is effective in the sense that it guarantees the existence of small linear forms in the periods $\xi_1,\ldots, \xi_m$. 
  Another approach is to note that the  determinant  $\det( D^{\ell}_N Q_N^{\sigma} D_N^r) \in \Z[\xi_1,\ldots, \xi_m]$ is a \emph{polynomial form} in $\xi_1,\ldots, \xi_m$  with integer coefficients. 
  This argument generalises the one-dimensional result of Zudilin \cite{ZudilinDet}. In this context, one can  simply replace $\delta_N=  \det( D^{\ell}_N D_N^r)$ with any $d_N$ such that  $d_N \det(Q_N^{\sigma}) \in \Z[\xi_1,\ldots, \xi_m]$.
  
  \begin{lem} Let $Q^{\sigma}_N \in M_{N\times N}(\Q[\xi_1,\ldots, \xi_m])$ and let $0<d_N  \in \Z$ be such that  $d_N \det(Q_N^{\sigma}) \in \Z[\xi_1,\ldots, \xi_m]$.
If $d_N |\det(Q_N^{\sigma})|\rightarrow 0$ as $N\rightarrow \infty$ and $\det(Q_N^{\sigma}) \neq 0$ for infinitely many $N$, then 
 $\dim_{\Q} \Q\langle \xi_1,\ldots, \xi_m\rangle >1$.
  \end{lem}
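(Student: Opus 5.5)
The plan is to argue by contradiction, using the integrality of $d_N\det(Q^{\sigma}_N)$ together with the fact that a nonzero integer has absolute value at least $1$. This is the multivariable analogue of Zudilin's determinant argument. Suppose that $\dim_{\Q}\Q\langle \xi_1,\ldots,\xi_m\rangle \le 1$. In the situation of interest ($\xi_1=1$, or $m=2$ with $\xi_1=1,\ \xi_2=\xi$) this means every $\xi_i$ is rational. Let $q\geq 1$ be a common denominator, so that $\xi_i\in \frac1q\Z$ for all $i$.

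First I would record the degree of $\det(Q^{\sigma}_N)$ as a polynomial in the $\xi_i$. Each entry of $Q^{\sigma}_N$ is a $\Q$-linear form in $\xi_1,\ldots,\xi_m$ (lemma \ref{lem: periodsxi}). Hence $\det(Q^{\sigma}_N)$ is the value at $(\xi_1,\ldots,\xi_m)$ of a homogeneous polynomial of degree $N$ with rational coefficients. The hypothesis says that $d_N\det(Q^{\sigma}_N)=P_N(\xi_1,\ldots,\xi_m)$ with $P_N\in\Z[X_1,\ldots,X_m]$ homogeneous of degree $N$. Substituting $\xi_i=a_i/q$ with $a_i\in\Z$ then gives
\[ q^N d_N \det(Q^{\sigma}_N) = P_N(a_1,\ldots,a_m)\in \Z . \]

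Next I would pass to the infinitely many $N$ for which $\det(Q^{\sigma}_N)\neq 0$. For such $N$ the integer $P_N(a_1,\ldots,a_m)$ is nonzero, so
\[ q^N d_N\,|\det(Q^{\sigma}_N)|\ \geq\ 1 . \]
When $q=1$, for instance when the $\xi_i$ are integers, this contradicts $d_N|\det(Q^{\sigma}_N)|\to 0$ immediately, and the lemma follows.

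The main obstacle is the factor $q^N$. A bare hypothesis $d_N|\det Q^{\sigma}_N|\to0$ does not rule out $q>1$. I would therefore make explicit that the limit hypothesis must beat every geometric factor: it is enough to know that $(d_N|\det Q^{\sigma}_N|)^{1/N}\to 0$. Equivalently, one can absorb $q^N$ into $d_N$, which is legitimate because the rational numbers $\xi_i$ would be fixed in advance. This stronger decay is exactly what holds in our setting. By corollary \ref{cor: detQNBoundTr} and the estimates of \S\ref{sect: Suptransfinite}, $\log\big(d_N|\det Q^{\sigma}_N|\big)\le -c\,e^{\mathcal{N}}_n$ for some $c>0$ as soon as $\ts^2_{\mathcal{N}}(f\sigma)\,\delta_{\mathcal{N}}<1$. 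The exponent $e^{\mathcal{N}}_n$ grows much faster than $N=\rank \mathcal{N}_n$ by \eqref{exponentgrowthcondition}; in the rectangular case $e_n\sim n^{r+1}$ while $N\sim n^r$. So $q^N d_N|\det Q^{\sigma}_N|\to 0$, which contradicts the inequality above for large admissible $N$. Hence not all $\xi_i$ are rational, and so $\dim_{\Q}\Q\langle\xi_1,\ldots,\xi_m\rangle>1$.
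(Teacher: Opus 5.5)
Your argument is the one the paper intends; it only says the proof is obvious. The idea is that $d_N\det Q^{\sigma}_N$ is the value of an integral form of degree $N$ in the $\xi_i$, and a nonzero integer has absolute value at least $1$.

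You are also right that the hypothesis as printed is too weak, so the lemma is false in its stated form. Take $m=2$, $\xi_1=1$, $\xi_2=\tfrac12$ and $Q^{\sigma}_N=\xi_2 I_N$, and choose $d_N=1$. Then $\det Q^{\sigma}_N\neq 0$ and $d_N|\det Q^{\sigma}_N|=2^{-N}\to 0$, yet $\dim_{\Q}\Q\langle\xi_1,\xi_2\rangle=1$.

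The hypothesis you propose, $(d_N|\det Q^{\sigma}_N|)^{1/N}\to 0$, is the right repair. Your check that it holds in the application is sound:
\begin{itemize}
\item[(i)] there $\log(d_N\det Q^{\sigma}_N)\le -c\,e_n$;
\item[(ii)] the factor $(\int_\sigma\omega)^N/N!$ from Proposition \ref{propdetQub} is harmless;
\item[(iii)] $N=o(e_n)$ by \eqref{exponentgrowthcondition}.
\end{itemize}

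Two points need tidying.

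First, the remark about ``absorbing $q^N$ into $d_N$'' is not a valid step on its own. $q^N d_N$ is still an admissible denominator, but the decay hypothesis does not pass from $d_N$ to $q^N d_N$. Only the strengthened hypothesis does the work.

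Second, you treat only the case $\xi_1=1$, while the lemma allows arbitrary $\xi_i$. The same argument covers the general case.
\begin{itemize}
\item[(a)] If the span is one-dimensional, write $\xi_i=\lambda a_i/q$ with $a_i\in\Z$ and $\lambda\neq 0$. Homogeneity then gives $P_N(a_1,\ldots,a_m)=(q/\lambda)^N d_N\det Q^{\sigma}_N\in\Z\setminus\{0\}$, which the strengthened hypothesis forbids.
\item[(b)] If the span is zero, then $\det Q^{\sigma}_N=0$ for all $N$, contradicting the nonvanishing assumption.
\end{itemize}
Homogeneity of degree $N$ uses two things: the entries are linear forms in the $\xi_i$ (Lemma \ref{lem: periodsxi}), and integrality is read on the formal polynomial, as you do. If some $\xi_i\notin\Z$ is rational, the subring $\Z[\xi_1,\ldots,\xi_m]\subset\R$ is dense, so reading integrality on real values would say nothing.
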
 
  
  The proof is obvious, and implies criterion \ref{intro:crit} when combined with earlier results.  In our computations \S\ref{sect:MainExample} we write down $d_N$ as a (typically  good)  approximation to  $\delta_N$  for ease of computation.

  \begin{rem} In our examples the matrices $Q^{\sigma}_N$  and their determinants are  highly structured  owing to the presence  of large symmetry groups. It would be interesting to exploit this structure systematically. 
  \end{rem}

 \section{Comments, variants, generalisations} \label{sect: Comments}
 
 \subsection{Beyond positivity}  To establish the non-vanishing of the determinant of $Q_N^{\sigma}$ we made some assumptions  to ensure that it is positive-definite. However, the assumption $(P1)$, for example,  can sometimes be dropped. 
  For instance, theorem 5.9  in \cite{LimitsTransfinite} gives conditions for the homogeneous transfinite diameter to be a determinant of integrals similar to the ones considered here.

 The holonomic structure can also be exploited. 
Consider  the case when there is a single parameter $r=1$. Then the matrix $Q_N^{\sigma}$ is a Hankel matrix 
\[ H_N(\underline{a})= \begin{pmatrix}  a_0 & a_1 & \ldots &a_N  \\ 
  a_1 & a_2 & \ldots &a_{N+1} \\
\vdots & \vdots &  \ddots & \vdots \\
a_N & a_{N+1} &  \ldots & a_{2N}
\end{pmatrix} \]
where $\underline{a}= (a_n)_n$ with $a_n = \int_{\sigma} f^n \omega$. A classical theorem of Kronecker's implies that the determinant of $H_N$  vanishes for all $N$ sufficiently large  if and only if 
the generating function $\sum a_k x^{-k-1}$ is a rational function of $x$.  See \cite[\S2]{HankelSolvability2016}, for an overview. 
 The latter can be ruled out on an \emph{ad-hoc} basis from the asymptotics of the integrals in question, or their holonomic recurrence equations.\footnote{Incidentally, the fact that the Hankel determinants $\det(H_N)$ are invariant under various operations on the sequence $\underline{a}=(a_n)$ such as the binomial transform may suggest  approaches to improve the denominator estimates}

In the case when there are more parameters, the matrices $Q_N^{\sigma}$ are `multivariable Hankel forms' in the sense of  \cite{PowerHankel}, who generalised some of Kronecker's results to this setting.  These results presumably point to generalisations of the above  results with weaker assumptions on the entries of  $Q_N^{\sigma}$.

 \subsection{Cohomological determinants} 
 Suppose, as in \S\ref{setup},   that $Y\subset X$ is simple normal crossing and defined over $\Q$.   For any  integers $n_i\geq0$,  the  integrand of $I(n_1,\ldots, n_r)$ defines a de Rham class
\[   [  f_1^{n_1} \ldots f_r^{n_r} \omega ] \quad  \in \quad  M_{\dR}  = H_{\dR}^d(X, Y) \]
which is a finite-dimensional $\Q$-vector space. In particular, any set $\ff$ of linearly independent functions as in \eqref{introff} produces a set of cohomology classes in $M_{\dR}$, and we may  
 consider the symmetric matrix:
  \begin{equation}  Q^{\dR}_N =   \left(  [  \ff_i  \ff_j \,   \omega]  \right)_{1\leq i,j\leq N} \end{equation}
 with entries  in $M_{\dR}$. They can be paired with any class $[\sigma] \in M_B^{\vee}:= H_d(X(\C),Y(\C)) $  to retrieve the matrix 
$  Q^{\sigma}_N=     \langle \sigma , Q^{\dR}_N \rangle $   considered  earlier. The family of determinants 
  \[  \det Q^{\dR}_N  \quad \in \quad  \mathrm{Sym}^{\bullet} ( M_{\dR}) \]
  form a very interesting algebraic object associated to the data of the Mellin integrals \eqref{introIs}. Note that the problem of  estimating  the denominator of $\det Q^{\sigma}_N$, relative to a fixed choice of $\Q$-basis for $M_{\dR}$, only depends    on     the de Rham matrix $Q^{\dR}_N$. 
  
\subsubsection{$p$-adic and other periods} \label{subsect:padic}
  
  We can deduce many other interesting numbers from $\det Q^{\dR}_N$. For instance,  we  can pair it  with any other relative homology class $[\sigma'] \in M_B^{\vee}$, and  we may also take $p$-adic periods by considering the pairing $\langle \mathrm{Sym}^{N} v, F_p \, \det Q^{\dR}_N\rangle$  where $F_p$ is a crystalline Frobenius  for $p$  a finite prime of good reduction and $v\in M_{dR}^{\vee}$.  We can even consider its single-valued period (where $p=\infty$, and $F_{\infty}$ is the real Frobenius).   This  suggests a notion of $p$-adic transfinite diameter which is related to the question of estimating the $p$-adic valuation of the denominator  of $\det Q^{\dR}_N$.  See remark \ref{rem: zeta2padic}.

 %
 %
 %
 %
 
  \section{Main example: $\zeta(2)$ and $\mathcal{M}_{0,5}$}  \label{sect:MainExample}
  In this section we study in some detail the family of linear forms in $1,\zeta(2)$ arising from the moduli space of curves $\mathcal{M}_{0,5}$ of genus $0$ with $5$ marked points. Although the applications to the  irrationality of $\zeta(2)$ are not of particular interest, the underlying geometry and Mellin transforms are very well understood and so this provides fertile ground for exploring the 
 constructions introduced in this paper.
  
  \subsection{A family of Mellin transforms}
  Consider the family of integrals
  \begin{equation} \label{generalM05integrals}  I(s_1,\ldots, s_5) = \int_{[0,1]^2}  u^{s_1}_1 \ldots u^{s_5}_5 \,  \omega  \qquad \hbox{ for } s_1,\ldots, s_5 \geq 0, 
  \end{equation} 
  where    $\omega = \frac{dx dy}{1-xy}$, and the $u_i$ are   dihedral `coordinates' \cite{BrENS} given by: 
  \begin{equation}\label{uiasxy}  u_1 = x \  , \ u_2 = \frac{1-x}{1-xy} \ , \ u_3 = \frac{1-y}{1-xy} \ , \ u_4 = y \ , u_5 = 1- xy \ .  \end{equation}
     It was shown by Dixon in 1905  that the 
   integrals $I(s_1,\ldots, s_5)$ are invariant under dihedral symmetries:
      \[ I(s_1,\ldots, s_5) = I(s_{\rho(1)}, \ldots, s_{\rho(5)} )\]
      where $\rho \in D_{10}$ is  the  group of symmetries of a pentagon with edges labelled $1,\ldots, 5$ in order.
  One may show by elementary or geometric means that 
  $I(h ,i,j,k,\ell ) \in \Z \zeta(2) + \Q$
    and more precisely   \cite{RVzeta2}  that 
    \begin{equation} \label{IM05denominatorbounds}   d_{m_1} d_{m_2} I(h,i,j,k,\ell) \in \Z + \Z \zeta(2) \end{equation}
    where $m_1, m_2$ are the two successive maxima of  the entries of the pole vector $\underline{p}=(p_1,\ldots, p_5)$ where 
       \begin{equation}  \label{p1-5vectors}   p_1  = i+j  -\ell   \quad  , \quad  p_2 = j+k -h  \quad  , \quad   p_3 = k+\ell - i   \quad  , \quad   
      p_4 = \ell +h -j \quad  , \quad p_5 = h+i -k \ .  
   \end{equation}
  Note that \eqref{IM05denominatorbounds} is not optimal for certain values of the arguments $(h,i,j,k,\ell)$. This is exploited in \cite{RVzeta2} as a consequence of several things including Gauss' hypergeometric transformation.      \subsubsection{Geometric interpretation}
See \cite{BrENS, DinnerParties} for further background.     
    The $u_1,\ldots, u_5$ satisfy the following dihedrally-symmetric  set of algebraic relations 
    \begin{equation} \label{urelations}   u_1 u_4 + u_5 =1 \ ,  \  u_2 u_5 + u_1 =1   \ ,  \ u_3 u_1 + u_2 =1    \ , \ 
        u_4 u_2 + u_3 =1 \ ,   \ u_5 u_3 + u_4 =1       \end{equation}  
    which can be interpreted combinatorially via sets of crossing chords in a pentagon \cite{BrENS}.  The ideal $I$ of $\Z[u_1,\ldots, u_5]$ generated by \eqref{urelations}  defines a smooth affine scheme 
    $\mathcal{M}_{0,5}^{\delta} = \mathrm{Spec} \left( \Z[u_1,\ldots, u_5]/I\right) $ which contains  the moduli space $\mathcal{M}_{0,5}$ of curves of genus $0$ with 5 ordered marked points 
    \[\mathcal{M}_{0,5}  \cong  \mathbb{A}^2  \ \setminus  \  V\left(xy(1-x)(1-y)(1-xy)\right) \   \overset{\eqref{uiasxy}}{\hookrightarrow}  \ \mathcal{M}_{0,5}^{\delta} \ ,   \] 
    where $\mathbb{A}^2 = \mathrm{Spec} \, \Q[x,y]$.   
     The vanishing locus  
   $Y= V(u_1\ldots u_5) \subset \mathcal{M}_{0,5}^{\delta}$
     is a simple normal crossing divisor with five irreducible components $Y_i: \{u_i=0\}$ and one proves that 
    \[ M= H^2 (\mathcal{M}^{\delta}_{0,5}, Y)  \cong  \Q(0) \oplus \Q(-2)\]
    which has two periods $1, \zeta(2)$. The domain 
    \[ \sigma^{\delta} = \{ u_1,\ldots, u_5 \geq 0 \} \subset \mathcal{M}_{0,5}^{\delta} (\R)\]
    defines a relative homology class $[\sigma^{\delta} ] \in M_B^{\vee}$ whose interior is homeomorphic to $(0,1)^2$ via \eqref{uiasxy}. One shows that  $\omega \in \Omega^2(\mathcal{M}^{\delta}_{0,5})$ has no zeros or poles along $Y$.
    
    Thus, in our initial set-up, consider  $X=\mathcal{M}_{0,5}^{\delta}$ equipped with the function
    \[ \underline{u} :  \mathcal{M}_{0,5}^{\delta}\To \mathbb{A}^5\ \]
whose components are $u_1,\ldots, u_5$.        One may show that
    \begin{equation}  \label{geomM05atinfinity}
     \mathcal{M}_{0,5} = \mathcal{M}_{0,5}^{\delta} \setminus Y \quad \hbox{ and }    \quad  \mathcal{M}_{0,5}^{\delta} = \overline{\mathcal{M}}_{0,5} \backslash E  \end{equation}
    where $\overline{\mathcal{M}}_{0,5}$ is the Deligne-Mumford-Knudsen compactification of $\mathcal{M}_{0,5}$ and $E$ is a divisor  `at infinity' with five irreducible components. 
     The parameters $p_1,\ldots, p_5$ may be interpreted as (one less than) the order of the poles of the differential form $u_1^{h}u_2^i u_3^j u_4^k u_5^{\ell} \omega$ along each component of $E$.

   In the remainder of this section we apply  the constructions in this paper to  families of integrals obtained by restricting 
    the most general family \eqref{generalM05integrals} to  linear subpaces of the parameter space $s_1,\ldots, s_5 \in \N^5$.
        
    \subsection{One parameter family} We first consider the family, considered by Ap\'ery and Beukers  \cite{Apery, Beukers} where all parameters are equal  $n_1=\ldots=n_5=n$.  
    \[ I(n) = \int_{[0,1]^2}  f^n \omega\quad \quad \hbox{ where }\quad  f=  u_1u_2u_3u_4u_5= \frac{x(1-x)y(1-y)}{1-xy}  \ . \]
     It is associated to $(\mathcal{M}_{0,5}^{\delta}, f)$ where $f: \mathcal{M}_{0,5}^{\delta}\rightarrow \mathbb{A}^1$.  The anciliary image variety is $V_f = \mathbb{A}^1$.
     The function $f$ has a unique critical point in the unit square  given by $x_0=y_0 = - \frac{1}{2}  + \frac{\sqrt{5}}{2}$ where it takes the value 
     \begin{equation}\label{etadef}   \eta =  \frac{5\sqrt{5} -11 }{2} \sim 0.09017 \ , \end{equation} 
     and so we deduce that 
     \[  f([0,1]^2) \subset  [0,\eta] \subset V_f(\R) = \R\] 
 By \eqref{IM05denominatorbounds}, the denominator of $I(n)$ is $d_n^2$.        
   Already this family is sufficient to establish irrationality of $\zeta(2)$ using the classical irrationality criterion since $e^2 \eta \sim 0.6663 < 1$. For later comparisons, the corresponding threshold 
   would be the quantity $\theta_{\mathrm{classical}} = -\frac{\log \eta}{2} = 1.203\ldots$. 
    
    Now we compare with the 1-dimensional set up of this paper (which gives an equivalent criterion to that of \cite{ZudilinDet}). Applying the example of \S\ref{intro: exampleintuitive}, with $r=1$, $w=2$, and $f_1=f$, we obtain the 
  criterion   $\frac{1}{4} \eta e^3 =0.4527\ldots< 1$ using the formula   \eqref{eqn:supinterval} for the transfinite diameter  of an interval. The corresponding threshold \eqref{thetaNdef}
   is $\theta_{1} =  - \frac{\log(\eta/4)}{3}= 1.2641\ldots$, a significant improvement on $\theta_{\mathrm{classical}}$. 

    \subsection{Two-parameter family}  Consider the family of integrals   \eqref{generalM05integrals} where $s_2=s_4$ and $s_1=s_2=s_5$.
    Thus we set $\sigma = \sigma^{\delta} \subset \mathcal{M}_{0,5}^{\delta}(\R)$ and  
    \begin{equation} \label{M052paramchoices} f_1=  u_2u_4 = \frac{ y(1-x)}{1-xy}  \quad , \quad f_2 = u_1u_3u_5 = x(1-y)  \ . 
    \end{equation}
    These two functions define a map 
    $f=(f_1,f_2) : \mathcal{M}_{0,5}^{\delta} \rightarrow \mathbb{A}^2\ . $
    Since $f_1,f_2$ are algebraically independent, the anciliary image variety $V_f \cong  \mathbb{A}^2$.
      The corresponding period integrals take the form     \[  I(n_1,n_2) =  \int_{\sigma}   f_1^{n_1} f_2^{n_2} \omega =  \int_{[0,1]^2}   \frac{  x^{n_2} (1-x)^{n_1} y^{n_1} (1-y)^{n_2}}{ (1-xy)^{n_2+1} } \, dx dy  \ . \]
      We consider the modules  
      \begin{equation} \label{Mndefnzeta2} \mathcal{M}_n  = \bigoplus_{1\leq i,j\leq n-1}  f_1^i f_2^j  \Z  \  \subset  \ \Or (\mathcal{M}_{0,5}^{\delta})  \end{equation}
      which are obtained as  the pullback $\mathcal{M}_n = f^* (\Z[x_1,x_2]^{\mathrm{rec}}_{(1,1)})_n$ of the affine ring of $V_f = \mathbb{A}^2$,  equipped with the rectangular filtration \S\ref{sect: rectVdM}. In particular,
      \begin{equation} \label{rankexponentoftwoparamexample} \rank \, \mathcal{M}_n = n^2 \quad \hbox{ and } \quad e^{\mathcal{M}}_n = n^2(n-1)\ . 
      \end{equation}
      The matrix $Q^{\sigma}_n$ is an $(n^2 \times n^2)$ matrix whose  entries are the periods $I(n_1,n_2)$ where $0\leq n_1,n_2 \leq  2(n-1)$.  
      For $n=1$, we have $Q^{\sigma}_n= (1)$. 
       By an elementary computation, the image of $\sigma$  is given by 
  \begin{equation}   \label{tauregion2paramfamily} \tau = f(\sigma)= \{ u,v \in \R^2: 0\leq u \leq \left(\frac{v-1}{v+1} \right)^2  \ , \  0\leq v\leq 1 \ .\}   \end{equation} 
  It is contained in the region denoted by  $\tau_{\eta}$ in  \S\ref{sect: hyperbolaregion},  where $\eta$ was defined in \eqref{etadef}. 
    The vector $\underline{p}= (p_1,\ldots, p_5)$  of  \eqref{p1-5vectors}  associated to $I(n_1,n_2)$ satisfies
  $  \underline{p} =  ( n_1,n_1,n_2,n_2,n_2)$. 
    It follows that 
    \begin{equation} \label{DenomboundforIn1n2}  d^2_{\mathrm{max}\{n_1,n_2\}} I(n_1,n_2) \in \Z + \Z \zeta(2) \ .
    \end{equation} 
    \begin{example} For $n=2$,  $\mathcal{M}_2 $ has basis $(1,f_1,f_2,f_{1}f_2)$. The matrix $Q^{\sigma}_{2}$ is
\[   \begin{pmatrix}
\int_{\gamma} \omega & \int_{\gamma} f_1 \omega & \int_{\gamma} f_2 \omega & \int_{\gamma} f_1 f_2 \omega \\[6pt]
\int_{\gamma} f_1 \omega & \int_{\gamma} f^2_1 \omega & \int_{\gamma} f_1f_2 \omega & \int_{\gamma} f^2_1 f_2 \omega \\[6pt]
\int_{\gamma}f_2 \omega & \int_{\gamma} f_1 f_2\omega & \int_{\gamma} f^2_2 \omega & \int_{\gamma} f_1 f^2_2 \omega \\[6pt]
\int_{\gamma}f_1 f_2  \omega & \int_{\gamma} f^2_1f_2 \omega & \int_{\gamma} f_1f^2_2 \omega & \int_{\gamma} f^2_1 f^2_2 \omega 
\end{pmatrix}   = \begin{pmatrix}
\zeta(2)  &-1+ \zeta(2) & 2- \zeta(2) & 5-3  \zeta(2)  \\[6pt]
-1+  \zeta(2) &  - \frac{5}{4} + \zeta(2) & 5-3\zeta(2) &  \frac{33}{4}- 5  \zeta(2)  \\[6pt]
2- \zeta(2) &5-3\zeta(2) &   -\frac{3}{2}+  \zeta(2)  &  -  \frac{23}{2} + 7 \zeta(2)  \\[6pt]
5-3\zeta(2) &  \frac{33}{4}- 5  \zeta(2)  &-  \frac{23}{2} + 7 \zeta(2)    & - \frac{125}{4} +19 \zeta(2)
\end{pmatrix}   \ . \]
It resembles, but  is  \emph{not}  in fact a classical Hankel matrix. Its determinant is 
\[ \det Q^{\sigma}_{2} \  =  \ - 8 \zeta(2)^4 + 23 \zeta(2)^3 + \frac{11}{4} \zeta(2)^2 - \frac{851}{16} \zeta(2) + \frac{145}{4}  \quad \sim \quad   8.05 \times 10^{-6} \ . \]
 Here, and throughout this section, we let $d_n$ denote the denominator of $\det Q^{\sigma}_{n}$.  In this case, $d_1=16$. 
 In practice, we find that $d_n$ is very close to  $\delta_n$ defined earlier. 
  After clearing denominators, the linear form remains very small, around 0.000129. It factorises into two terms 
\begin{equation} \label{detFactorsf1f2} 16 \det Q^{\sigma}_2 = -  (8 \zeta(2)^2 - \zeta(2) -20) ( 16 \zeta(2)^2 -44 \zeta(2) +29) \ .\end{equation}
The exponent \eqref{rankexponentoftwoparamexample} in this case is $e_2 =4$ and we find that
\[  t_2= (\det Q^{\sigma}_2)^{\frac{1}{4}} \sim  0.0532 \quad , \quad    d^{1/e_2}_1=   16^{1/4}= 2\ . \]
  \end{example}

 \begin{example}
 For $n=3$, the module $\mathcal{M}_3$ has basis 
\[ (1,f_1,f_2,f_1^2,f_1f_2, f_2^2, f_1f_2^2,f_1^2f_2,f_1^2f_2^2)\]
The exponent $e_3 = 18$. The matrix $Q_3$ is a  $9 \times 9$ matrix whose entries are linear forms in $1, \zeta(2)$. Its determinant is 
\[\det Q^{\sigma}_3 = -2304 \, \zeta(2)^9 + 18160\,  \zeta(2)^8 + \ldots \ \in\  \Q[\zeta(2)]\] 
The numerical value of $\det Q^{\sigma}_3  \sim 3.76 \times 10^{-27}$ and its denominator is $d_3 = 2^{18} 3^{16}$.  After clearing denominators, this linear form is of order $4.24 \times 10^{-14}$. We find that 
\[  t_3= (\det Q^{\sigma}_3)^{\frac{1}{18}} \sim  0.0340 \quad , \quad    d_3^{1/18}= 2 (3)^{16/18} \sim   5.3106\]
  \end{example} 
         Proceeding in this manner we compute: 
 \[ 
\begin{array}{c|cccc}
n  &N_n&  e_n & \det(Q^{\sigma}_n) & d_n   \\ \hline
 4 &    16  &  48 &   5.19 \times 10^{-75} &  2^{48} 3^{30}5^{20} \\
   5 &  25  & 100  &   6.29 \times 10^{-160} &  2^{90} 3^{48} 5^{48} 7^{24}\\
  6 &   37 &180  &  1.97 \times 10^{-292}  &  2^{147} 3^{98} 5^{71} 7^{58}  \\
  7 &   49 &294  &  4.68 \times 10^{-483}  &  2^{226} 3^{160} 5^{93} 7^{96} 11^{32}  \\
  8 &   64 &448  &   2.46 \times 10^{-742}  & 2^{324} 3^{234} 5^{119} 7^{128} 11^{74} 13^{36} \\
  9 &   81& 648   &  8.41 \times 10^{-1081}  &  2^{432} 3^{320} 5^{158} 7^{160} 11^{128} 13^{82} \\
  10 &  100 &  900     &        5.42 \times 10^{-1509}                         &    2^{555} 3^{392} 5^{199} 7^{196} 11^{186} 13^{140} 17^{44} \\
  11  &  121 & 1210     &    1.93      \times 10^{-2037}                        &    2^{706} 3^{467} 5^{240} 7^{233} 11^{240} 13^{206} 17^{98} 19^{48}  \\
  12 &  144 & 1584     &        1.10  \times 10^{-2676} &    2^{872} 3^{560} 5^{280} 7^{284} 11^{288} 13^{274} 17^{164} 19^{106}  \\
  13 & 169 & 2028 & 2.95\times 10^{-3437} &    2^{1071} 3^{665} 5^{319} 7^{335} 11^{338} 13^{336} 17^{240} 19^{176} 23^{56}  \\
  14 & 196  & 2548 &  1.08 \times 10^{-4329} &  2^{1308} 3^{759} 5^{442} 7^{383} 11^{392} 13^{392} 17^{326} 19^{256} 23^{122} \\
  15 &  225  &   3150    &     1.56 \times 10^{-5364}  &     2^{1552} 3^{942} 5^{576} 7^{440} 11^{450} 13^{450} 17^{414}  19^{348}  23^{200} \\
  16 &  256 &  3840   &  2.61 \times 10^{-6552} &  2^{1804} 3^{1144} 5^{716} 7^{506} 11^{509} 13^{512} 17^{498} 19^{444} 23^{288} 29^{68}
\end{array}
\]
These computations were carried out on Maple using standard procedures.




    \subsubsection{Experimental estimates}
The  data above may be expressed in the following table. The entries $t_N$  are bounded above by the  supremal transfinite diameter.
\[
\begin{array}{c|cccccc}
 n  &  \mathrm{rank}&  e_n   & t^2_n & \log d^{1/e_n}_n   &  t^2_n d_n^{1/e_n}  & \vartheta_n  \\ \hline
2   & 4 &  4  &  0.05327 & 0.693   &   0.1065  & 4.231  \\
 3 & 9& 18  &  0.03404 & 1.669   &  0.1808 & 2.025 \\
 4 & 16 & 48  &  0.02834 &  2.050  &  0.2202 &  1.738 \\
 5 & 25 &100  & 0.02559  &  2.391 &  0.2794  &  1.533 \\
 6 & 36 & 180  & 0.02396   & 2.426 &   0.2710  &   1.538 \\
 7 & 49 & 294  &  0.02287  &2.536  & 0.2890    &  1.490  \\
 8 & 64 & 448  & 0.02211  &2.661 &   0.3164  &     1.432 \\
 9 &  81 & 648 &   0.02154 & 2.676 &  0.3128 &    1.434 \\ 
 10 & 100 & 900  & 0.02109  & 2.718  &  0.3198   &  1.420  \\ 
 11 &121 & 1210 & 0.02074         & 2.781 &    0.3346 & 1.394   \\
 12 &144 &  1584 & 0.02045        &  2.773 &    0.3274  & 1.403 \\
 13 &169&  2028 & 0.02020        & 2.803  &   0.3332    &  1.392  \\
 14 &196&  2548 & 0.02000 & 2.827 & 0.3378    &  1.383  \\
 15 &225& 3150 & 0.01982  & 2.842 & 0.3399   &  1.380 \\
 16 &256 &  3840 & 0.01967     & 2.872  & 0.3476   &  1.368
 \end{array}
\]
For  the irrationality criteria to apply, the entries in the last column must tend to a limit  $>1$, and those of the second last column $<1$. 
The  bound of  lemma \ref{lem: HardDenomBound}, applied to \eqref{DenomboundforIn1n2} yields $\log d^{1/e_n}_n \leq \frac{10}{3}=3.\overset{.}{3}$ (see below for an improvement).   However, we find that the product of the least 
common multiple of the denominators  in each row  of  $Q_n$ is consistently and significantly larger than the true denominator of the determinant.
Notice that the values $t_n$ are decreasing which suggests that the squared supremal transfinite diameter is $<0.01967$ (column headed $t^2_n$).

\begin{prop}  \label{prop: ts11taubound}  We have the bound  
 $   (  \ts_{(1,1)}^{\rec}(\tau) )^2< 0.023 $ 
 \end{prop}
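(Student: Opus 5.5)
The plan is to combine Proposition \ref{prop: mapxyxplusy} with the explicit formula \eqref{eqn: transfinitetriangle} for the transfinite diameter of a triangle. The point is to use the true region $\tau$ of \eqref{tauregion2paramfamily} rather than the larger region $\tau_\eta$. For $\tau_\eta$, the bounds of Example \ref{ex: regiontauepsilonviatensor} only give $(\eta/16)^{2/3}\approx 0.0317$, which is not good enough.

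\textbf{Step 1: reduce to a homogeneous bound.} Proposition \ref{prop: mapxyxplusy} gives $\ts^{\rec}_{(1,1)}(\tau)^2\le \ts^{\hom}(\phi\tau)^{4/3}$, where $\phi(u,v)=(uv,u+v)=:(P,S)$. It therefore suffices to show
\[\ts^{\hom}(\phi\tau) < 0.023^{3/4}\approx 0.0591 .\]
Since $\ts^{\hom}$ is monotone \eqref{eqn: Subsetinequality}, it is enough to find a triangle $T\supset\phi(\tau)$ with $\mathrm{vol}(T)^{1/2}/(e\sqrt2)<0.0591$, that is, with $\mathrm{vol}(T)<0.0516$.

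\textbf{Step 2: describe $\phi(\tau)$.} I will establish three facts about the image region.
\begin{enumerate}
\item On $\tau$ we have $u+v\le 1$, because $\left(\frac{1-v}{1+v}\right)^2\le 1-v$ for $v\in[0,1]$. Equality holds only at $v=0$. Hence $S\in[0,1]$.
\item $P=uv\le \eta$, with the maximum attained on the curved boundary. This is the same computation that gives \eqref{etadef}.
\item $P\le S^2/4$ by AM--GM.
\end{enumerate}
The crucial extra feature is that on the line $S=1$ the only point of $\tau$ is $(u,v)=(1,0)$. So $P\to 0$ as $S\to1$, and the image is a thin lens-shaped region. It touches the $S$-axis at $S=0$ and at $S=1$ and bulges out to $P=\eta\approx0.090$ at some intermediate $S=s_0$. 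The value $s_0$ is computed from the boundary curve $u=((1-v)/(1+v))^2$ at the maximising $v$.

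\textbf{Step 3: enclose the lens in a triangle.} I will take the triangle with vertices $(0,0)$, $(0,1)$ and $(p,s)$, with $p$ slightly larger than $\eta$, say $p\approx 0.1$, and $s$ near $s_0$. Its area is $p/2\approx 0.05<0.0516$. Containment requires two checks.
\begin{itemize}
\item The lower edge $P=pS/s$: it dominates $S^2/4$ for $S\le 4p/s$, which covers the part of the region below the bulge.
\item The upper edge $P=p(1-S)/(1-s)$: it must dominate the image of the curved boundary of $\tau$ for $S\in[s,1]$.
\end{itemize}
Both checks reduce to one-variable polynomial inequalities after parametrising the boundary by $v$. These can be verified by elementary calculus or interval arithmetic.

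\textbf{Main obstacle.} The margin is tight, since an area of $0.05$ sits against a threshold of $0.0516$. The hardest step is choosing $(p,s)$ so that the upper edge really clears the curve near $S=1$, where the image is tangent to the $S$-axis. If a single triangle does not fit, my fallback is to cover $\phi(\tau)$ by a slightly more economical shape. One option is to apply a determinant-one shear as in Example \ref{ex: regiontauepsilonviatensor} before enclosing, since \eqref{eqn: thomsimilarity} leaves $\ts^{\hom}$ unchanged under such maps. Another is to replace $\phi$ by the pair $\phi_x,\phi_y$ of the direct-sum proposition, which gives the bound $\ts^{\hom}(\phi_x\tau)^{1/3}\,\ts^{\hom}(\phi_y\tau)^{1/3}$, and then enclose each of $\phi_x(\tau)$ and $\phi_y(\tau)$ in a small triangle, again via \eqref{eqn: transfinitetriangle}.
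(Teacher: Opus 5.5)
Your route is the paper's: reduce via Proposition~\ref{prop: mapxyxplusy} to $\mathrm{Sup}^{\hom}(\phi\tau)<0.023^{3/4}$, then enclose $\phi(\tau)$ in a triangle of area below $2e^2(0.023)^{3/2}\approx 0.05155$ and apply \eqref{eqn: transfinitetriangle}. The paper uses the triangle with vertices $(0,0)$, $(0,1.05)$, $(0.098,0.653)$, of area $\approx 0.05145$. Because the margin is only about $0.3\%$, the choice of triangle is essentially the whole proof. \textbf{Step 3 as you set it up fails}, and the geometric reason you give is wrong.

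\textbf{The corner at $(0,1)$.} The image is \emph{not} tangent to the $S$-axis at $(0,1)$. Write $g(v)=\bigl(\frac{1-v}{1+v}\bigr)^2$, so the outer boundary arc is $v\mapsto (vg(v),\,v+g(v))$. Along it
\[
1-S=\frac{v(1-v)(3+v)}{(1+v)^2},\qquad \frac{P}{1-S}=\frac{1-v}{3+v}\longrightarrow \frac13 \quad (v\to 0).
\]
So the arc leaves $(0,1)$ along $P\approx (1-S)/3$. Any triangle with vertices $(0,0),(0,1),(p,s)$ containing $\phi(\tau)$ therefore needs $p\ge (1-s)/3$. For $s$ near $s_0=\frac{\sqrt5-1}{2}\approx 0.618$ this forces $p\ge 0.127$ and area $\ge 0.064$, well above the threshold.

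\textbf{The lower edge.} Your lower-edge check compares only with the parabola $P=S^2/4$. The binding constraint is on the curved arc: $P/S$ attains its maximum $\approx 0.1486$ at $v\approx 0.274$ ($S\approx 0.599$). The parabola part only reaches $\approx 0.1478$. So you also need $p\ge 0.1486\,s$.

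\textbf{Consequence and fix.} Combining, $p\ge\max\{0.1486\,s,\,(1-s)/3\}\ge 0.1028$ for every $s$. Hence $p\approx 0.1$ is impossible with any $s$. The optimum lies at $s\approx 0.692$, with area $\approx 0.05139$. The plan therefore survives only with $s$ well above $s_0$. For example, $(p,s)=(0.103,0.692)$ satisfies both constraints and has area $0.0515$, giving $\bigl(0.0515/(2e^2)\bigr)^{2/3}\approx 0.02299<0.023$. The slacks are tiny, so $\max P/S<0.1488$ must be certified carefully. Alternatively, lift the top vertex off $(0,1)$ as the paper does; your shear fallback amounts to the same thing, namely allowing a general triangle.

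\textbf{The $\phi_x,\phi_y$ fallback.} This one cannot work with triangles. The regions $\phi_x(\tau)$ and $\phi_y(\tau)$ have areas $\int_0^1 \tfrac12 g(v)^2\,dv\approx 0.0607$ and $\int_0^1 v\,g(v)\,dv\approx 0.0452$. Any enclosing triangles therefore yield at best $\bigl(0.0607\cdot 0.0452/(2e^2)^2\bigr)^{1/3}\approx 0.0232>0.023$.
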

 \begin{proof}
    Consider the  image of the region $\tau$  \eqref{tauregion2paramfamily} under the map $\phi$. It   satisfies 
  \[ T_{\min} \subset \phi(\tau) \subset T_{\max}\]
  where $T_{\min}$, $T_{\max}$ are the triangles depicted in figure \ref{figure: boundedzeta2region}. Since 
  \[ \ts^{\hom}(T_{\min})   \  \leq \  \ts^{\hom}(\phi(\tau))  \ \leq	 \  \ts^{\hom}(T_{\max})\]
  we deduce from  \eqref{eqn: transfinitetriangle}  that 
  \[  0.017<  \left( \frac{\mathrm{vol}(T_{\min})}{2 \, e^2}  \right)^{\frac{2}{3}}  \  \leq \  \ts_{(1,1)}^{\hom}(\tau) \   \leq  \    \left( \frac{\mathrm{vol}(T_{\max})}{2 \, e^2}  \right)^{\frac{2}{3}} <0.023  \  \]
  The inequalities follow by computing the areas of $T_1,T_2$; the statement from   proposition  \ref{prop: mapxyxplusy}.
\end{proof} 
   Experiments suggest that the square of the transfinite diameter is slightly smaller than the average $0.02$ and significantly smaller than our upper bound 0.023. Indeed, from figure  \ref{figure: boundedzeta2region} we see that there is a considerable amount of daylight between the lower part of the black curve and the  outer triangle. 

  \begin{figure}[h]
{\includegraphics[height=5cm]{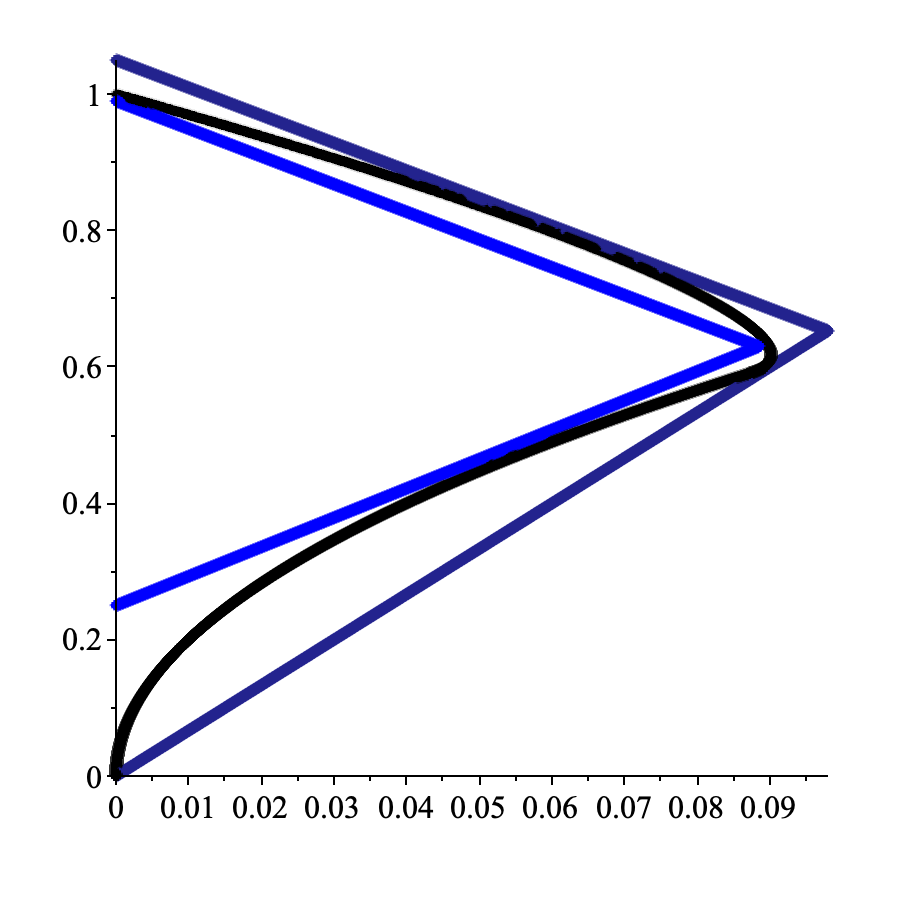}} 
\begin{caption}{The  region $\phi(\tau)$  is  bounded by the $y$-axis  and the black curve. The  top part of this curve is the image of 
$u(v+1)^2=(v-1)^2$ under the map $x=uv, y=u+v$; the  bottom is given by the parabola $y^2=4x$. 
The region $\phi(\tau)$ is visibly  contained in a large  blue triangle $T_{\max}$,  whose vertices are $(0,0)$, $(0,1.05)$, $(0.098,0.653)$, and  contains a smaller blue triangle $T_{\min}$ whose vertices are $(0,0.25)$, $(0,0.99)$, $(0.0885,0.63).$ 
}\label{figure: boundedzeta2region} 
\end{caption} 
\end{figure}

\subsubsection{Denominators}
A modest improvement of the denominator estimate may be obtained by changing basis of $\mathcal{M}_n$, which does not change its determinant.  One way to do this is to replace $f_1,f_2$ with 
\[ g_1 = 1-f_1  = u_3   \quad \hbox{ and } \qquad g_2 = 1+f_2 =u_1 +u_5   \ .\]
A monomial $g_1^a g_2^b$ is a linear combination of $u_3^a u_1^i u_5^j$ where $i+j=b$, whose  vector of poles  \eqref{p1-5vectors} is 
$\underline{p}=(a-j,a-i,j,b -a,i) $. Its two maxima are bounded above by
\[(m_1,m_2) = \begin{cases}   ( a, \max\{b,a-\textstyle{\frac{b}{2}}\})  \quad  \hbox{ if }  a\geq b,  \\   (b, \max\{a, b-a\})\quad   \hbox{ if }  b\geq a \end{cases}\]
 for all $i,j$. Define $L_n$ (resp. $R_n$) to be the diagonal matrix whose entry   corresponding to the  basis element $g_1^ag_2^b$ is given by 
 $d_{\max\{b,a-b/2\}+n}$ (resp. $d_{\max\{a,a-b\}+n}$). Then it follows from a short computation and  \eqref{IM05denominatorbounds}  that $L_n Q_n R_n$ has entries which are integral linear combinations of $1, \zeta(2)$. Using the fact that
 \[  \sum_{0\leq a,b<n} \max\{a+n,b-a+n\}  \sim \frac{19}{12} n^3  \quad , \quad  \sum_{0\leq a,b<n} \max\{b+n,a-\displaystyle{\frac{b}{2}}+n\}  \sim \frac{29}{18} n^3 \ ,    \]
 and that $e_n \sim n^3$  by \eqref{rankexponentoftwoparamexample}, we conclude that 
 \[  \limsup_N \left( \log (d_N^{\frac{1}{e_N}}) \right) \leq  \frac{19}{12}+\frac{29}{18}= \frac{115}{36} = 3.19\overset{.}{4} \ . \]
 It seems likely that one could reduce this further, since the  estimates used above  are not optimal.

\subsection{Variant: two copies}  
Let $\mathcal{M}_n \subset \Or(\mathcal{M}_{0,5}^{\delta})$ be as in \eqref{Mndefnzeta2} and now  consider the direct sum 
\[  \mathcal{M}_n \oplus u_1 \mathcal{M}_n\]
where  we recall from \eqref{uiasxy} that $u_1$ is the function $x$.  The exponent $e_n =2n^2(n-1)$ is defined to be exactly double the previous example \eqref{rankexponentoftwoparamexample}. 
We obtain:
\[
\begin{array}{c|cccccc}
 n  & \mathrm{rank} & e_n   & t^2_n & \log d^{1/e_n}_n  &  t^2_n d_n^{1/e_n} & \vartheta_n    \\ \hline
2    & 8 & 8  &  0.01312   &  2.022   &  0.0991 & 2.143  \\
 3  & 18 & 36  &  0.01625 & 2.554   & 0.2089  &   1.613 \\
 4 & 32& 96  &  0.01707 &   2.571 & 0.2233  &   1.583  \\
 5 & 50& 200  &  0.01738 & 2. 740 &   0.2691 &   1.479 \\
 6 & 72 & 360  &   0.01752 & 2.732  &  0.2690  &   1.480 \\
 7 & 98& 588 &    0.01759   & 2.802 &  0.2897 &   1.446 \\
 8 & 128 &896 &  0.01762 & 2.845 & 0.3032 &   1.420  \\
 9 & 162& 1296 & 0.01764 & 2.862 & 0.3088 & 1.411 
\end{array}
\]
The third column is bounded above by the square of the supremal transfinite diameter, by corollary \ref{cor: stationarytp}.
Notice that the entries in this column are now \emph{increasing} and seem to converge more quickly than the previous example.
They are  consistent with the upper bound    of proposition \ref{prop: ts11taubound}.

   \subsection{The five-parameter family}
We now consider the five-parameter family
\[ (u_1,\ldots, u_5) : \mathcal{M}_{0,5}^{\delta} \To \mathbb{A}^5 \ . \]
  The anciliary image variety is exactly the scheme $\mathcal{M}_{0,5}^{\delta}$ of dimension 2, embedded naturally in $\mathbb{A}^5$, and 
the region $\sigma= \sigma^{\delta}$ is  defined by $u_i\geq 0$.

We now take the  full affine ring $ \mathcal{M}= \mathcal{O}(\mathcal{M}_{0,5}^{\delta})$ viewed as a $\Z$-module.    
We may filter $\mathcal{M}$ by the maximum of the vector of poles \eqref{p1-5vectors}.

\begin{lem} We have the isomorphism of $\Z$-modules
\begin{equation} \label{MonomialBasisM05}  \mathcal{M}_n  = \Z \oplus  \bigoplus_{a+b\leq n , a>0}  \, \bigoplus_{i \!\!\!\!\!\mod 5} \,    u^a_{i} u^b_{i+1}  \Z \ .  
\end{equation}
\end{lem}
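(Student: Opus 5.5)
The plan is to prove spanning and linear independence separately, and to track the pole filtration at each step. First I rewrite the relations \eqref{urelations} in dihedral form. Indices are read mod $5$, and each relation reads
\[ u_j u_{j+2} = 1 - u_{j+1}\ . \]
For example, $j=4$ gives $u_4u_1 = 1-u_5$ and $j=5$ gives $u_5u_2=1-u_1$. So the product of any two non-adjacent coordinates equals $1$ minus the coordinate lying between them.

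\emph{Spanning.} Take any monomial $u_1^{s_1}\cdots u_5^{s_5}$ whose support is not contained in a pair of adjacent indices. Then it contains a factor $u_ju_{j+2}$. Replacing that factor by $1-u_{j+1}$ gives a $\Z$-linear combination, with coefficients $\pm1$, of monomials of strictly smaller total degree. Induction on total degree then shows that $\Or(\mathcal{M}_{0,5}^{\delta}) = \Z[u_1,\ldots,u_5]/I$ is spanned over $\Z$ by $1$ and the monomials $u_i^a u_{i+1}^b$ with $a>0$. The condition $a>0$ just removes double counting of pure powers $u_{i+1}^b$.

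Next comes compatibility with the filtration. By \eqref{p1-5vectors}, the monomial $u_i^au_{i+1}^b$ has pole vector whose maximal entry is $a+b$. Indeed, its poles along the boundary components of $E$ in \eqref{geomM05atinfinity} are governed by the linear forms $p_k$, and for a monomial supported on two adjacent indices the largest $p_k$ is the sum of the two exponents. I will check this once, say for $i=1$: the vector is $(b,-a,\ldots)$ up to the dihedral action, with maximum entry $a+b$. For the reduction step I need that applying $u_ju_{j+2}\mapsto 1-u_{j+1}$ never increases the maximal pole order. Geometrically this is automatic, because the pole order along each component of $E$ is a valuation, so it is subadditive on sums and additive on products; the rewriting is an identity of functions, so no term can be forced to have larger poles than the function. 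To make this rigorous I will define $\mathcal{M}_n$ intrinsically as the functions whose pole order along each component of $E$ is at most $n$. Then the claimed basis elements with $a+b\le n$ lie in $\mathcal{M}_n$, and it remains to show that they span it and are independent.

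\emph{Independence and exact spanning of $\mathcal{M}_n$.} This is the main obstacle, and I will handle it with a dimension count over $\Q$ followed by a lattice argument. The divisor $E$ is a pentagon of $(-1)$-curves in $\overline{\mathcal{M}}_{0,5}$, which is a del Pezzo surface of degree $5$, and $E$ is anticanonical. Hence
\[ \mathcal{M}_n\otimes\Q = H^0\bigl(\overline{\mathcal{M}}_{0,5}, \Or(nE)\bigr) = H^0(-nK)\ . \]
Riemann--Roch together with Kodaira vanishing gives
\[ h^0(-nK) = 1 + \tfrac{n(n+1)}{2}K^2 = 1 + 5\,\tfrac{n(n+1)}{2}\ . \]
This is exactly the number of proposed generators: $1$, plus $5$ times the number of pairs $(a,b)$ with $a>0$, $b\ge0$, $a+b\le n$. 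The generators span $\mathcal{M}_n\otimes\Q$ by the reduction argument above, so the count forces them to be $\Q$-linearly independent. It follows that they are a basis of $\Or(\mathcal{M}_{0,5}^{\delta})\otimes\Q$ adapted to the filtration.

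Integrality is the remaining point. The reduction uses only coefficients $\pm1$, so every element of $\Z[u_1,\ldots,u_5]/I$ lies in the $\Z$-span of the generators. An element of $\mathcal{M}_n$ (integral, with pole order at most $n$) therefore has an expansion with integer coefficients. By uniqueness of the $\Q$-expansion, only generators with $a+b\le n$ can appear, since the leading pole behaviour of the generators with $a+b$ maximal, along the relevant components, is independent. This yields \eqref{MonomialBasisM05}.
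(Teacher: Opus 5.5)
There is a genuine gap, and it sits exactly where you invoke the dimension count. With $\mathcal{M}_n\otimes\Q=H^0(\overline{\mathcal{M}}_{0,5},\Or(nE))$ defined intrinsically, the sentence ``the generators span $\mathcal{M}_n\otimes\Q$ by the reduction argument'' does not follow. The reduction only shows that a single \emph{monomial} whose pole vector has entries $\leq n$ rewrites into generators with $a+b\leq n$. An arbitrary $g\in H^0(nE)$ is a polynomial in the $u_i$ whose individual monomials may have poles of order $>n$ that cancel in the sum. Reducing those monomials produces generators with $a+b>n$. To see that these drop out, you need that the leading pole parts of the generators with $a+b=N$ are independent, and you only assert this, without proof, in your last paragraph. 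If you instead take $\mathcal{M}_n$ to be the span of monomials with pole entries $\leq n$, spanning is fine, but then you only know $\mathcal{M}_n\otimes\Q\subseteq H^0(nE)$, so the count gives an inequality rather than an equality. Riemann--Roch lets you trade spanning of $H^0(nE)$ for independence or vice versa, but it supplies neither, so as written the argument is circular.

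Separately, your valuation heuristic runs the wrong way. The inequality $v(f+g)\geq\min(v(f),v(g))$ bounds the poles of a sum by those of its summands, not conversely. The monomial-level claim is still true here, but for a concrete reason: the pole vector of $u_ju_{j+2}$ is the componentwise maximum of $0$ and the pole vector of $u_{j+1}$. For $j=1$ these are $(1,0,0,0,1)$ and $(1,0,-1,0,1)$.

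The paper handles independence through the pole orders $a,a+b,b$ of $u_i^au_{i+1}^b$ along three consecutive components of $E$, which is exactly the leading-term analysis you skip. Your Riemann--Roch idea can be made into a complete and genuinely different proof if you establish independence of \emph{all} the generators by other means. Set $g_j:=u_ju_{j+2}+u_{j+1}-1$. These five elements form a Gr\"obner basis for any degree-compatible order. The only pairs with non-coprime leading terms are the rotations of $(g_1,g_3)$, and $u_5g_1-u_1g_3=u_2u_5-u_5-u_1u_4+u_1$ reduces to $0$ using $g_5$ and $g_4$. Hence the monomials supported on adjacent indices form a $\Z$-basis of $\Z[u_1,\ldots,u_5]/I$. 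Let $G_n$ be the span of the generators with $a+b\leq n$. Then $G_n\otimes\Q\subseteq H^0(nE)$, and both sides have dimension $1+\tfrac52 n(n+1)$, so they are equal. Your integrality argument via uniqueness of expansions then goes through unchanged. This route avoids the leading-term analysis along $E$ altogether.
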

\begin{proof} We outline the main ideas. First check that $\mathcal{O}(\mathcal{M}_{0,5}^{\delta}) \subset \sum_{i,a,b} u_i^a u_{i+1}^b \Z$ by successively applying  the dihedral relations \eqref{urelations} to any monomial in the $u_i$'s to replace $u_{i-1} u_{i+1}$ with $1- u_i$. Since this decreases the total degree in the $u_i$,  this process terminates when it can be applied no more, namely to monomials which only involve $u_i, u_j$ where $i,j$ are consecutive modulo 5. To show independence of the generators on the right-hand side of \eqref{MonomialBasisM05}, compute the pole vector \eqref{p1-5vectors} of $u_1^h u_2^i$ which equals $(i,-h,-i,h,h+i)$. The divisor  $E$ at infinity \eqref{geomM05atinfinity} has 5 irreducible components $E_1,\ldots, E_5$ which are simple normal crossing and form a pentagon. 
We deduce  from the geometric interpretation of the pole vectors \cite{BrENS} that $u_i^{a} u_{i+1}^b$ has  poles of order $a,a+b,b$ along three consecutive such edges which are uniquely determined by the index $i$, and no pole along the other two. This is enough information to establish independence, which is left as an exercise.
\end{proof} 
The  Poincar\'e series for the associated graded module $\mathrm{gr} \,\mathcal{M}$ is given by 
\begin{equation} \label{PoincareSeriesgrM}  \frac{1+3t+t^2}{(1-t)^2} =1 + 5 t + 10 t^2 +15 t^3 +\ldots \ .
\end{equation} 
\begin{example}
$\mathcal{M}_1$ has dimension $6$ and has the basis $\{ 1, u_1,\ldots, u_5\}$. 
The corresponding matrix is \[Q^{\sigma}_{1} = \begin{pmatrix}  \int_{\sigma} \omega &     \int_{\sigma} u_1\omega &  \int_{\sigma} u_2\omega &  \int_{\sigma}u_3 \omega & \int_{\sigma} u_4\omega
& \int_{\sigma} u_5\omega\\[6pt]
\int_{\sigma} u_1 \omega &     \int_{\sigma} u^2_1\omega &  \int_{\sigma} u_1u_2\omega &  \int_{\sigma}u_1u_3 \omega & \int_{\sigma} u_1u_4\omega
& \int_{\sigma} u_1u_5\omega\\[6pt]
\int_{\sigma}  u_2 \omega &     \int_{\sigma} u_1u_2\omega &  \int_{\sigma} u^2_2\omega &  \int_{\sigma}u_2u_3 \omega & \int_{\sigma}u_2 u_4\omega
& \int_{\sigma} u_2u_5\omega\\[6pt]
\int_{\sigma} u_3 \omega &     \int_{\sigma} u_1u_3\omega &  \int_{\sigma} u_2u_3\omega &  \int_{\sigma}u^2_3 \omega & \int_{\sigma} u_3u_4\omega
& \int_{\sigma} u_3u_5\omega\\[6pt]
\int_{\sigma}  u_4\omega &     \int_{\sigma} u_1u_4\omega &  \int_{\sigma} u_2u_4\omega &  \int_{\sigma}u_3u_4 \omega & \int_{\sigma} u^2_4\omega
& \int_{\sigma} u_4u_5\omega\\[6pt]
\int_{\sigma}  u_5\omega &     \int_{\sigma} u_1u_5\omega &  \int_{\sigma} u_2u_5\omega &  \int_{\sigma}u_3u_5 \omega & \int_{\sigma} u_4u_5\omega
& \int_{\sigma} u^2_5\omega 
\end{pmatrix}\]
By performing the integrals we find that
\[  Q^{\sigma}_{1} = \begin{pmatrix} \zeta(2)  &    1 & 1  &  1  &1  & 1 \\[6pt]
1  &     \frac{3}{4} & \frac{1}{2}  &  -1+ \zeta(2)   &  -1+ \zeta(2)
& \frac{1}{2}  \\[6pt]
1 &    \frac{1}{2} &  \frac{3}{4} & \frac{1}{2} &   -1+ \zeta(2)
&  -1+ \zeta(2) \\[6pt]
1 &      -1+ \zeta(2)& \frac{1}{2} & \frac{3}{4} &\frac{1}{2}
&  -1+ \zeta(2)  \\[6pt]
1 &      -1+ \zeta(2) &    -1+ \zeta(2) &  \frac{1}{2} & \frac{3}{4}
& \frac{1}{2}  \\[6pt]
1 &  \frac{1}{2}&   -1+ \zeta(2) &   -1+ \zeta(2) & \frac{1}{2}
& \frac{3}{4} 
\end{pmatrix}\]
 Observe that the matrix $Q^{\sigma}_1$ has a large proportion of entries which are rational, i.e., for which the coefficient of $\zeta(2)$ vanishes. This arises 
whenever the corresponding pole vector \eqref{p1-5vectors} has a negative term (see \cite{DinnerParties} for a cohomological  interpretation). 
By dihedral symmetry of the coordinates $u_1,\ldots, u_5$ the matrix $Q^{\sigma}_{1}$ evidently satisfies $P^T Q^{\sigma}_{1} P$ for any permutation matrix $P$ which preserves the first column and acts as a dihedral symmetry on columns $2,\ldots, 6$.  Consequently, 
the determinant, viewed as  a function of $x= \zeta(2)$ has the curious factorization:
\[ \det( Q^{\sigma}_{1})  =\frac{1}{1024} (8x^2-x-20) (16 x^2 - 44x + 29)^2  \]
Notice that these factors already occurred as the determinant of a $4\times 4$ matrix  for the 2-parameter family for $\zeta(2)$ (see \eqref{detFactorsf1f2}), which involved  more complicated-looking linear forms in $\zeta(2)$. The factorisation of the determinant  of $Q^{\sigma}_n$ also arises for higher $n$; the precise structure is interesting and warrants further investigation. 
Substituting $x= \zeta(2)$ we find that 
$\det(Q^{\sigma}_{1}) \sim 1.059 \times 10^{-8}$.

\end{example} 

\begin{example}
The module $\mathcal{M}_2$ has the basis
\[ \m = (1,  u_1, u_2, u_3, u_4, u_5 ,u_1u_2,u_2u_3,u_3u_4,u_4u_5,u_5u_1,u_1^2,u_2^2,u_3^2,u_4^2,u_5^2 ) \ .\]
Then the resulting $16 \times 16$ matrix has a determinant of order $10^{-49}$ and denominator $2^{45} 3^{30}$.  Its determinant, as a polynomial in $\zeta(2)$ once again factorises as  a polynomial of degree $4$ and $6$ (the latter with multiplicity two).
\end{example}

\subsubsection{Supremal transfinite diameter} Recall the definition of $f_1,f_2$  \eqref{M052paramchoices}.
We may show that 
\begin{equation}  \mathcal{M}_n =  \Z \oplus \bigoplus_{i \in \Z/5\Z}  \rho^i \left(  u_1  R_n\right) \end{equation}
where $R_n= \Z[f_1f_2 , f_1+f_2]^{\hom}_n$ and $\rho$ is the cyclic symmetry $u_i \mapsto u_{i+1}$ where $i,i+1 \in \Z/5\Z$.   Let $R= \bigoplus_n R_n$.
Since the exponent function of $R_n$ is $\sim \frac{n^3}{3} $ by  \eqref{exponent:hom}  it is natural, in view of remark \ref{rem: exponentdirectsum}, to define the exponent of $\mathcal{M}_n$ to be  the total degree in the $u_i$ with respect to the basis \eqref{MonomialBasisM05}
\begin{equation}  e^{\mathcal{M}}_n =   \frac{5}{6} n(n+1)(2n+1) \ . \end{equation} 
The supremal transfinite diameter can then be estimated from corollary \ref{cor: directsums}. Thus, since 
$e_n^{\mathcal{M}}/e_n^{\mathcal{R}} \sim \frac{5}{3}/\frac{1}{3}= 5$ and $\mathcal{M}_n$ is essentially a direct sum of 5 copies of $R_n$,  we deduce that 
\[  \ts_{\mathcal{M}} (\sigma) \leq  \ts_{R} ( \sigma)=  \ts^{\hom} ( \phi(\tau))\]
where the second equality follows from  $R_n = \phi^*  \Z[f_1, f_2]^{\hom}_n$ and functoriality \eqref{eqn: tmorphism}, and
where $\phi(\tau)$ is the region discussed in proposition \ref{prop: ts11taubound}. We deduce from the upper bound that 
\begin{equation} \label{UBfortsofm05}  \ts_{\mathcal{M}} (\sigma)^2 < (0.023)^{3/2}= 0.003488\ldots  \end{equation} 
We expect that  this upper bound should be closer to $(0.02)^{\frac{3}{2}} \sim 0.0028$. 
\subsubsection{Denominator bounds}
Consider the basis of $\mathcal{M}_n$ given in \eqref{MonomialBasisM05}. A term in the matrix $Q^{\sigma}_n$ is equivalent, by dihedral symmetry, to an integral  with integrand $u_1^{a} u_2^{b} u_i^c u_{i+1}^d \omega$ where $i \in \Z/5\Z$ and $a+b, c+d\leq n$.  The pole vector can be computed for each value of $i$, and its maximum is attained when $i=1$, leading to a largest pole of order $a+b+c+d$. The second maximum is attained when $i=2$ (or $i=5$), in which case it is $a+c+d$ (or $b+c+d$). Thus we may bound the two successive maxima $m_2\leq m_1$, solely as functions of $a,b$, by 
\[ m_1 \leq  a+b +n\quad \ , \quad  m_2 \leq \max\{a,b\} +n \ .\]
An elementary computation yields
\[ \sum_{0\leq a, b\leq n} a+b+n \sim \frac{5}{6} n^3 \quad \hbox{ and } \quad  \sum_{0\leq a, b\leq n} \max\{a,b\}+n \sim \frac{3}{4} n^3\ . \] 
Since \eqref{MonomialBasisM05} is a direct sum over  5 such copies,  the relevant factor is $ 5(\frac{5}{6}+\frac{3}{4}) = \frac{95}{12}$ and  
in total that the denominator is (crudely) bounded by 
\begin{equation} \label{denboundM05} \lim_n \,  \exp \left( \frac{95}{12} n^3\right)^{\frac{1}{e_n}} = \exp \left( \frac{19}{4}\right) = \exp(4.75) \ ,\end{equation}
since $  e^{\mathcal{M}}_n  \sim \frac{5}{3}n^3$.
This bound is based on the least common multiple of the denominators of row entries. Experiments suggest that  the actual denominator is considerably smaller.

\subsubsection{Experimental estimates}
Numerical computations yield the following results:
\[
\begin{array}{c|cccccc}
 n  & \mathrm{rank} & e_n   & t_n & \log d^{1/e_n}_n   &  t_n d_n^{1/e_n} & \vartheta_n   \\ \hline
1    & 6 &  5  &  0.02541  &    2.843  &  0.436  &  1.29 \\
 2 & 16&  25 &  0.01095 & 2.638 &  0.153  &     1.711 \\
 3 & 31 & 70   &  0.00724   & 2.894  & 0.130  &  1.703 \\
 4 &  51& 150    &  0.00563   &   3.450     & 0.177  &  1.501 \\ 
 5 &  76& 275    &      0.00475            &   3.536     & 0.163  &   1.513 \\ 
 6 &  106 & 455  &         0.00421         &      3.703   &  0.171 &  1.477  \\ 
 7 &  141 & 700  &     0.00384    &    3.884      &  0.186 &    1.432 \\ 
 8 &  181 & 1020  &      0.00357            &  3.926     & 0.181  &  1.435 \\ 
 9 &    226 & 1425&          0.00337        &     3.995  &    0.183 &  1.425 \\ 
 10 &   276 & 1925   &        0.00321          &     4.068    &  0.188 & 1.411 \\ 
 11 &    331 & 2539  &       0.00309           &      4.037   &  0.175 & 1.432
  \end{array}
\]
Note that because of the large size of the matrices, we could not compute the  denominators for large $n$  exactly, but estimated them. The data is entirely consistent with the bounds established above.
The final two columns need to be $<1$ and $>1$ respectively for an irrationality proof. 
It is not clear what to expect of the limit of the  thresholds  $\vartheta_n$, but they seem to compare  very favorably with $\vartheta_{\mathrm{classical}} = 1.203..$ for the classical irrationality criterion, and the threshold  $\vartheta_1=1.264..$ for the 1-dimensional determinant method (which is already  a significant improvement on the classical one).

\subsubsection{Closing remarks}
We did not exploit the dihedral symetries in the above analysis. It would be interesting to perform similar  computations for the   supremal transfinite diameter for the much smaller module $\mathcal{M}= \Or(\mathcal{M}_{0,5}^{\delta})^{D_{10}}$ consisting of  dihedrally-invariant functions.

\begin{rem} \label{rem: zeta2padic} 
The $p$-adic zeta value $\zeta_p(2)$ vanishes, and hence the $p$-adic version  $Q^{p}_n$  of the matrices $Q^{\dR}_n$  (see  \S\ref{subsect:padic}) have only rational entries. The denominator of their determinant is very slightly smaller than the one for $Q^{\sigma}_n$ but negligeably so, and therefore one expects that in the limit, the denominator can be computed from the $p$-adic valuation of the $p$-adic realisation of $Q^{\dR}_n$.
Thus, we  expect that 
\[  \delta_n^{1/e_n} \sim  \prod_p \left| \det Q^p_n \right|^{1/e_n}_p 
\]
which suggests an interpretation of criterion \ref{intro: limitcrit} in  the form: 
\begin{equation} \limsup_n   \Big( \left| \det Q_n^{\sigma} \right|^2  \prod_p   \left| \det Q^p_n \right|_p\Big)^{\frac{1}{e_n}}  <1\end{equation} 

\end{rem}

   \appendix 
   \section{Vandermonde determinants for tensor products of modules}  \label{sect: Appendix} In this appendix we recall  a formula   \cite{BrVdM}  for the generalised Vandermonde determinant  of a tensor product of finite-rank free modules.    
  \subsubsection{Amalgams of  matrices}
    Let $m,n\geq 1$ and consider a  matrix with $mn$ rows and $m$ columns:
\begin{equation} \label{matrixAdef} A=   \begin{pmatrix}  a_{11} & a_{12} &  \ldots & a_{1m} \\ 
a_{21} & a_{22} &  \ldots & a_{2m} \\ 
 \vdots    &   \vdots&  & \vdots \\
a_{mn\, 1 } & a_{mn \, 2} &  \ldots & a_{mn \, m} \\ 
 \end{pmatrix}  = 
 \begin{pmatrix}  \underline{a}_1 \\ 
 \underline{a}_2 \\ 
      \vdots \\
\underline{a}_{mn} \\ 
 \end{pmatrix}    
 \end{equation} 
 with entries $a_{i  j}$ in a commutative ring $R$, and where $\underline{a}_i = (a_{i 1} \ldots a_{i  m} ) \in R^m$ denotes the $i^{\mathrm{th}}$ row vector.
 Likewise, consider the matrix with  $mn$ rows and $n$ columns 
\begin{equation} \label{matrixBdef}    B=   \begin{pmatrix}  b_{11} & b_{12} &  \ldots & b_{1 n } \\ 
b_{21} & b_{22} &  \ldots & b_{2 n } \\ 
 \vdots    &  \vdots &  & \vdots \\
b_{mn\, 1 } & b_{mn \, 2} &  \ldots & b_{mn \,  n } \\ 
 \end{pmatrix} =  \begin{pmatrix}   \underline{b}_1 \\ 
 \underline{b}_2 \\ 
      \vdots \\
\underline{b}_{mn} \\ 
 \end{pmatrix}     \end{equation}
 with entries $b_{ij}\in R$, and  $\underline{b}_i = (b_{i  1} \ldots b_{i  n} ) \in R^n$ denotes the $i^{\mathrm{th}}$ row.
  
  \begin{defn} Define their  amalgamated matrix product:
 \[  
 A \star B =     \begin{pmatrix} \underline{a}_1 \otimes   \underline{b}_1 \\ 
  \underline{a}_2 \otimes \underline{b}_2 \\ 
      \vdots \\
 \underline{a}_{mn} \otimes  \underline{b}_{mn} \\ 
 \end{pmatrix}  \]
 It is a square matrix with  $mn$ rows and columns, and entries in $R$.  
 \end{defn} 
 Our convention for the Kronecker tensor product of two row vectors is:
 \[  (a_1  ,  \ldots ,  a_m) \otimes (b_1,   \ldots  , b_n)  =   
   ( \underbrace{a_1 b_1  ,  a_2 b_1  ,  \ldots ,   a_m b_1}_{\underline{a} \otimes b_1} , \underbrace{a_1 b_2 , a_2 b_2 , \ldots, a_m b_2}_{\underline{a} \otimes b_2}  , \ldots ,   \underbrace{a_1 b_n , a_2 b_n , \ldots, a_m b_n}_{\underline{a} \otimes b_n}   )  \]
 where  we write commas between entries  for  legibility and $\underline{a} = (a_1,\ldots, a_m)$.  This convention applies to every row in the definition of $A \star B$.  The  Kronecker  product of two matrices is a special case.
 
 \begin{example} Let $m=3$ and $n=2$, and consider the matrices
\[ A= \begin{pmatrix}
a_{11}   &   a_{12}   & a_{13}    \\ 
a_{21}   &   a_{22}    & a_{23}  \\
a_{31}  &   a_{32}      &a_{33} \\
a_{41}   &   a_{42} & a_{43} \\
a_{51}   &   a_{52} & a_{53} \\
a_{61}   &   a_{62} & a_{63}     
\end{pmatrix} 
\\  \quad , \quad B= \begin{pmatrix}
b_{11}  &    b_{12}    \\ 
b_{21}  &    b_{22}     \\
b_{31}  &     b_{32}    \\
b_{41}  &     b_{42}    \\
b_{51}  &     b_{52}    \\
b_{61}  &     b_{62}    
\end{pmatrix} 
\]
Then 
\[ A\star B = 
\begin{pmatrix}
a_{11}b_{11}  &   a_{12} b_{11}    & a_{13} b_{11}   & a_{11}b_{12}  &   a_{12} b_{12} &  a_{13} b_{12}  \\ 
a_{21}b_{21}  &   a_{22} b_{21}   & a_{23} b_{21}  & a_{21}b_{22}  &   a_{22} b_{22}   &  a_{23} b_{22}  \\
a_{31}b_{31}  &   a_{32} b_{31}   & a_{33} b_{31}  & a_{31}b_{32}  &   a_{32} b_{32} &   a_{33} b_{32}   \\
a_{41}b_{41}  &   a_{42} b_{41}    & a_{43} b_{41} & a_{41}b_{42}  &   a_{42} b_{42}  & a_{43} b_{42}   \\
a_{51}b_{51}  &   a_{52} b_{51}    & a_{53} b_{51} & a_{51}b_{52}  &   a_{52} b_{52}  & a_{53} b_{52}   \\
a_{61}b_{61}  &   a_{62} b_{61}    & a_{63} b_{61} & a_{61}b_{62}  &   a_{62} b_{62}  & a_{63} b_{62}   \\
\end{pmatrix} 
 \]
In particular, if 
\[ A= \begin{pmatrix}
1  &   x_1   & x_1^2  \\ 
1   &   x_2  & x_2^2  \\
\vdots  &   \vdots      &\vdots \\
1   &   x_6 & x_6^2     
\end{pmatrix} 
\\  \quad , \quad B= \begin{pmatrix}
1 &   y_1    \\ 
1  &    y_2     \\
\vdots  &   \vdots    \\
1  &     y_6
\end{pmatrix} 
\]
Then $A\star B$ is the matrix $V^{\mathrm{rec}}_{(3,2)}(z_1,\ldots, z_6)$ considered above. 
 \end{example} 
\subsubsection{A  formula for the  determinant of an amalgam}
 
 Consider the trivial Young tableau  $1_{n^m}$ of shape $n^m$ consisting of $m$ rows and $n$ columns in which the numbers $1,\ldots,mn$ are placed from top to bottom and left to right, in that order. Thus the first column consists of the numbers $1,\ldots, m$ read from top to bottom; the second $m+1, \ldots, 2m$, etc.
 The symmetric group $\Sigma_{mn}$ acts on its entries by permutation. 

Consider the tableau $\alpha= \sigma  (1_{n^m})$, where $\sigma \in \Sigma_{mn}$. Let us write
\[  A_{\sigma} =\prod_{i=1}^n \det A_{\mathrm{col}_i(\alpha)} \quad \hbox{ and } \quad  B_{\sigma} =\prod_{i=1}^n \det B_{\mathrm{row}_i(\alpha)} \]
where, for any subset $I \subset \{1,\ldots, mn\}$, $A_I$ is the square submatrix of $A$ with rows indexed by $I$, and similarly for $B$; and $\mathrm{col}_i(\alpha)$ (resp. $\mathrm{row}_i(\alpha)$) denotes the ith row (resp. column) of $\alpha$. 
 The following theorem is a special case of \cite[Theorem 3.8]{BrVdM}.

 \begin{thm}  \label{thm: detAmalgam} Let $A,B$ and $m,n$ be as above and let  
 \begin{equation} \label{eqn: Hmndef}  H_{m,n} = \prod_{i=0}^{n-1} \frac{(m+i)!}{i!} \ . 
 \end{equation} 
 Then 
 \[ \det(A\star B) = \frac{1}{H_{m,n}} \sum_{\sigma \in \Sigma_{mn}} \varepsilon(\sigma) A_{\sigma \alpha} B_{\sigma \beta} \  . \]
  \end{thm}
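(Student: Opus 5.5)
The plan is to expand both sides as polynomials in the entries of $A$ and $B$. The two sides will turn out to be proportional, and the constant of proportionality will be identified with $H_{m,n}$. I read the statement as $\det(A\star B)=H_{m,n}^{-1}\sum_{\sigma}\varepsilon(\sigma)A_\sigma B_\sigma$, with $A_\sigma,B_\sigma$ the products of minors attached to the columns, resp. rows, of $\alpha=\sigma(1_{n^m})$ as defined above.

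\emph{Expanding the left-hand side.} The columns of $A\star B$ are indexed by pairs $(j,l)\in[m]\times[n]$, which we identify with the cells of the $m\times n$ tableau; cell $(j,l)$ lies in row $j$ and column $l$. A permutation in the Leibniz expansion is then the same thing as a bijective filling $T$ of the cells by the row indices $1,\ldots,mn$. Hence
\[\det(A\star B)=\sum_{T}\varepsilon(T)\prod_{(j,l)}a_{T(j,l),\,j}\,b_{T(j,l),\,l}.\]

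\emph{Expanding the right-hand side.} For $A_\sigma$, apply the Leibniz formula to each $m\times m$ minor attached to a column of $\alpha$; this amounts to a column permutation $p$ in the column group $C$ of the tableau. For $B_\sigma$, do the same for each $n\times n$ minor attached to a row of $\alpha$; this amounts to a row permutation $q$ in the row group $R$. Each triple $(\sigma,p,q)$ contributes $\pm\prod_k a_{k,J(k)}b_{k,L(k)}$. Here $J(k)$ is the row of the cell that $p$ assigns to $k$ in $\alpha$, and $L(k)$ is the column of the cell that $q$ assigns to $k$.

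\emph{Antisymmetrising in $\sigma$.} Group the terms with $(p,q)$ fixed and sum over $\sigma$ with the sign $\varepsilon(\sigma)$.
\begin{itemize}
\item Suppose $k\mapsto(J(k),L(k))$ is not injective, so two indices $k\neq k'$ land in the same cell. Composing $\sigma$ with the transposition $(k\,k')$ leaves the monomial unchanged but reverses the sign, so these contributions cancel.
\item Otherwise $k\mapsto(J(k),L(k))$ is a bijective filling. Summing over $\sigma$ then reproduces the full Leibniz sum above, up to a sign depending only on $(p,q)$.
\end{itemize}
Consequently
\[\sum_\sigma\varepsilon(\sigma)A_\sigma B_\sigma=c\cdot\det(A\star B),\qquad c=\sum_{p\in C,\ q\in R}\varepsilon(p)\varepsilon(q)\,\epsilon(p,q)\,[\text{the combined map is a bijection}],\]
where $\epsilon(p,q)$ is the residual sign. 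This $c$ depends only on $m,n$, not on $A$ or $B$.

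\emph{Identifying $c$.} The main obstacle is computing $c$ and tracking signs correctly. I would recast $c$ as the coefficient of the identity in a product of a row symmetrizer and a column antisymmetrizer in $\Q[\Sigma_{mn}]$, with signs twisted so that it matches the Young symmetrizer $e_\lambda$ of the rectangular shape $\lambda=n^m$. Then I would invoke the classical identity $e_\lambda^2=\frac{(mn)!}{f^\lambda}e_\lambda$. By the hook length formula, $\frac{(mn)!}{f^\lambda}$ is the product of the hook lengths of $\lambda$. For the $m\times n$ rectangle the hook lengths in column $i$ are $m-1+i,\ldots,i$ (one for each row, read downward), so the product of all hooks is $\prod_{i=0}^{n-1}(m+i)!/i!=H_{m,n}$. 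The delicate points are the sign convention and the orientation, i.e.\ whether the relevant product is $ab$ or $ba$; both have the same identity coefficient.

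As an independent check, and as a fallback if the sign bookkeeping becomes murky, I would fix $c$ by evaluation on a specific example. Take $A,B$ Vandermonde in the product configuration $w_{ij}=(x_i,y_j)$, where $A\star B$ is a Kronecker product. Then $\det(A\star B)=\det(A_0)^n\det(B_0)^m$, with $A_0=(x_i^{\,j-1})$ and $B_0=(y_i^{\,j-1})$ the square Vandermonde matrices. Computing the right-hand side in this case confirms both the sign of $c$ and its value $H_{m,n}$.
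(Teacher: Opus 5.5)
The paper gives no proof of this statement. It quotes it from \cite[Theorem 3.8]{BrVdM}, which works with Specht modules, so a Young-symmetrizer argument is the natural route. Your reduction to a constant $c$ is correct, but two points should be made explicit.

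First, the rows of $A_{\mathrm{col}_i(\alpha)}$ and $B_{\mathrm{row}_j(\alpha)}$ must be ordered as they occur down the column, resp.\ along the row, of $\alpha$. Your Leibniz expansion uses this, and with increasing order the right-hand side already vanishes for $m=1$, $n=2$.

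Second, let $\Phi_{p,q}$ denote the map sending a cell $x$ to the cell lying in the row of $p(x)$ and the column of $q(x)$. Your combined map is then $\Phi_{p,q}$ composed with the bijection $k\mapsto(\text{cell of }k\text{ in }\alpha)$. Consequently:
\begin{itemize}
\item bijectivity depends only on $(p,q)$;
\item the cancellation is the involution $\sigma\mapsto\sigma\circ(x\,x')$ for a fixed pair of cells with $\Phi_{p,q}(x)=\Phi_{p,q}(x')$ (cells identified with $\{1,\ldots,mn\}$ via $1_{n^m}$);
\item the resulting filling is $T=\sigma\circ\Phi_{p,q}^{-1}$, so the residual sign is $\varepsilon(\Phi_{p,q})$.
\end{itemize}
This gives
\[
c=\sum_{\substack{p\in C,\ q\in R\\ \Phi_{p,q}\ \text{bijective}}}\varepsilon(p)\,\varepsilon(q)\,\varepsilon(\Phi_{p,q}).
\]

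The genuine gap is the identification of $c$, which carries all the content and which you only announce. Taken literally, ``the coefficient of the identity in a product of a row symmetrizer and a column antisymmetrizer'' equals $1$, not $H_{m,n}$. What you need is the coefficient of $1$ in the \emph{square} of such a product, and you give no reason why $c$ has that form.

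The missing lemma is: $\Phi_{p,q}$ is bijective if and only if $\Phi_{p,q}=rp=sq$ for some $r\in R$, $s\in C$, which are then unique. If $\Phi_{p,q}$ is bijective, then $r=\Phi_{p,q}p^{-1}$ preserves rows and $s=\Phi_{p,q}q^{-1}$ preserves columns. Conversely, if $rp=sq$, this permutation sends $x$ into the row of $p(x)$ and the column of $q(x)$, so it equals $\Phi_{p,q}$.

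Hence $\varepsilon(p)\varepsilon(q)\varepsilon(\Phi_{p,q})=\varepsilon(p)\varepsilon(s)$, and with $a=\sum_{r\in R}r$ and $b=\sum_{s\in C}\varepsilon(s)s$,
\[
c=\sum_{\substack{p,s\in C,\ q,r\in R\\ s^{-1}rpq^{-1}=1}}\varepsilon(s)\,\varepsilon(p)=\text{coefficient of }1\text{ in }(ba)^2 .
\]
Taking the coefficient of $1$ is a trace on $\mathbb{Q}[\Sigma_{mn}]$. So this equals the coefficient of $1$ in $(ab)^2=h_\lambda\,ab$, where $h_\lambda=(mn)!/f^\lambda$. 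That coefficient is $h_\lambda$, because $R\cap C=\{1\}$. Hence $c=h_\lambda=H_{m,n}$ by your hook-length computation. No twisting of signs is needed, and $c$ comes out positive.

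Your fallback is therefore unnecessary, and it would not have been an independent route anyway. In the product configuration the right-hand side becomes a signed count of fillings of the $m\times n$ tableau by the points $(x_i,y_j)$ in which each column contains every $x_i$ once and each row every $y_j$ once. That is just the computation of $c$ again; for $m=n=2$ there are $12=H_{2,2}$ such fillings.
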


\newpage
\section{Fast computation of $I(h,i,j,k,l)$}

The following method leads to very fast computation of the matrices  $Q^{\sigma}$ discussed in \S\ref{sect:MainExample}.

\subsection{Contiguity matrices}
Consider the de Rham cohomology group which underlies the family of Mellin integrals $I(s_1,\ldots, s_5)$ defined by \eqref{generalM05integrals},  namely
\[  \mathcal{M}_{\dR}=H_{\dR}^2 (\mathcal{M}_{0,5}/k; (\mathcal{O}, \nabla))\]
over the field $k=\Q(s_1,\ldots, s_5)$, 
where $\mathcal{O}$ is the structure sheaf of  $\mathcal{M}_{0,5}$ and 
\[ \nabla = d+   \sum_{i=1}^5 s_i \frac{d u_i}{u_i}\]
where $u_i$ are given by \eqref{uiasxy}. By a method to be explained elsewhere, we can easily compute the space $\mathcal{M}_{\dR}$  by computing the iterated higher direct images of the algebraic vector bundle with integrable connection $(\mathcal{O}, \nabla)$ relative to the two maps  $(x,y)\mapsto y: \mathcal{M}_{0,5} \rightarrow \mathcal{M}_{0,4} $ and $\mathcal{M}_{0,4} \rightarrow  \mathcal{M}_{0,3}\cong \mathrm{Spec} \, \Q$, where $\mathcal{M}_{0,4}\cong \mathbb{P}^1 \setminus \{0,1,\infty\}$ has the affine  coordinate $y$. We verify that $\mathcal{M}_{\dR}$ has rank 2 with basis given by the cohomology classes of the forms\[  \omega_1= \frac{dx dy}{1-xy} \qquad \hbox{ and } \qquad  \omega_0= dx dy\ . \]
 For each $1\leq i \leq 5$, define the shift operator $T_i : \mathcal{M}_{\dR} \rightarrow \mathcal{M}_{\dR}$ which is  induced by  multiplication by $u_i$ on $\omega_0,\omega_1$. In this basis they are represented explicitly by $2\times 2$ matrices we denote by $M_i (s)\in M_{2\times 2}(k)$.  They are  easily computed in Maple and given explicitly below. One checks the relation:
 \begin{equation}  \label{Mconsistency}  M_i(\tau_i s) M_j(s) = M_j (\tau_j s) M_i(s) 
 \end{equation} 
 for all $1\leq i,j\leq 5$, 
 where $s=(s_1,\ldots, s_5)$ and $\tau_i$ is the operator which replaces $s_i $ with $s_i+1$. 
It follows that  for a  generic 5-tuple $\underline{s} = (s_1,\ldots, s_5)\in \R^5$ and $\underline{s}' = \tau_5 \underline{s}$ such that the integrals converge one has:
\begin{equation} \label{computeIs}   \begin{pmatrix}  I(\tau_i\underline{s}) \\
   I(\tau_i\underline{s}')   
\end{pmatrix}  =
M_i(\underline{s})    \begin{pmatrix}  I( \underline{s}) \\
  I(\underline{s}')  
\end{pmatrix}  \ . 
 \end{equation} 
 To compute $I(h,i,j,k,l)$ for integer values of $h,i,j,k,l$, we simply compose the matrices $M_i$, specialised to integer values of $s_i$,  in a suitable order, and apply them to the initial vector of periods 
\[ v   =   \left(   \int_{\sigma} \omega_1   \ , \     \int_{\sigma} \omega_0 \right)^T =  \begin{pmatrix} \zeta(2) \\    1 \end{pmatrix} \]

Note that since the matrices $M_i$ have poles in the $s_i$, some care must be taken regarding the order in which they are composed (see below), but  
\eqref{Mconsistency} ensures that the answer is independent of the order taken (provided it avoids poles).  This is very similar to the approach in \cite{CMF}, where a set of matrices $M_i$ are derived  from  hypergeometric functions and is presumably equivalent in this case.

\subsection{Explicit formulae for the matrices $M_i(s)$} Our computer code produces: 
\begin{align*}
M_1 & =\frac{1}{a_4(1+a_3)}\begin{pmatrix}
\left(1+a_{3}\right) \left(a_{5} -a_{1} \right) & \left(1+a_{3}\right) a_{2} 
\\
 \left(a_{1}+a_{4} -a_{5} \right) \left(a_{5}-1\right) & \left(1-a_{1}+a_{2}\right) a_{4}-a_{2} \left(a_{5}-1\right)
\end{pmatrix} \\
M_2 & =\frac{1}{a_4a_5} \begin{pmatrix}
\left(a_{1}-a_{2}\right) a_{4}+a_{3} \left(a_{5}-a_{1}\right) & a_{2} a_{3} 
\\
 \left(a_{1}+a_{4} -a_{5}\right) a_{5} & -a_{2} a_{5} 
\end{pmatrix}
 \\
M_3& =\frac{1}{a_1a_5} \begin{pmatrix}
\left(a_4-a_{3}\right) a_{1}+a_{2} \left(a_{5}-a_{4}\right) & a_{2} a_{3} 
\\
  \left(a_{1}+a_{4} -a_{5}\right) a_{5}& -a_{3} a_{5} 
\end{pmatrix}
\\
M_4 &= \frac{1}{a_1(1+a_2)} \begin{pmatrix}
\left(1+a_{2}\right) \left(a_{5}-a_{4}\right) & \left(1+a_{2}\right) a_{3} 
\\
 \left(a_{1}+a_{4}-a_{5}\right) \left(a_{5}-1\right) & \left(a_{3}-a_{4}+1\right) a_{1}-a_{3} \left(a_{5} -1\right) 
\end{pmatrix}\\
M_5& =\frac{1}{(1+a_2)(1+a_3)} \begin{pmatrix}
0 & \left(1+a_{3}\right) \left(1+a_{2}\right) 
\\
 \left(a_{1}+a_{4} -a_{5}\right) \left(a_{5}-1\right) & \left(a_{4}-a_{5}+1\right)( a_{2}+1)+\left(a_{1}-a_{5}+1\right) a_{3}+\left(1-a_{4}\right) a_{1} 
\end{pmatrix}
\end{align*}
where the $a_i$ are the pole vectors $p_{i+1}+1$: 
\[ {a_1 = s_3 + s_4 - s_1 + 1 \ ,  \ \ldots \  , \   a_5 = s_2 + s_3 - s_5 + 1}\ . \]
The matrices $M_i$ can be used to compute $I(n_1,n_2,n_3,n_4,n_5)$ for $n_i\geq 0$ recursively   starting from $(n_1,\ldots, n_5) = (0,0,0,0,0)$  provided   that 
there always exists an index $i$ such that $M_i(n_1,\ldots, n_5)$  is finite (work backwards by induction).  This is always possible: if the index $1\leq i \leq 5$ satisfies $s_i = \max\{s_1,\ldots, s_5\}$ then $a_{i+2},a_{i+3}>0$, where indices are mod $5$, and so $M_i$ is finite by inspection of the formulae above.

%

\begin{example} We obtain $I(0,0,1,0,1)= \zeta(2)-1$ by first applying $M_3$ to $v$, and then applying $M_5$: 
\[ M_5(0,0,1,0,0) M_3(0,0,0,0,0)\,  v =\begin{pmatrix}  0 & 1 \\  \frac{1}{2} & 0  \end{pmatrix}  \begin{pmatrix}  0 & 1 \\ 1 & -1 \end{pmatrix}  \begin{pmatrix} \zeta(2) \\ 1 \end{pmatrix}  = \begin{pmatrix}  \zeta(2)-1 \\ \frac{1}{2} \end{pmatrix} \]
via two applications of \eqref{computeIs} and the definition of $v$. 
\end{example}

\begin{rem}
The contiguity operator with respect to shifting all parameters $s_1,\ldots, s_5$ simultaneously, is similarly a $2\times 2$ matrix which is  easily  computed. If we restrict it to the `diagonal' case $s_1=s_2=\ldots = s_5=n$ we obtain the matrix with determinant  $-(n+1)^2(n+2)^{-2}$ given explicitly by
\[\begin{pmatrix} 
-3 &  5 \\
\frac{5n^2+13n +8}{(n+2)^2}  &  - \frac{8n^2+21n+13}{(n+2)^2}\
\end{pmatrix}\ .  \]
  It enables us  to recover   the Ap\'ery recurrence for the sequence $I(n,\ldots, n)$. 
\end{rem}

\renewcommand\refname{References}

\bibliographystyle{alpha}

\bibliography{biblio}

\newcommand{\etalchar}[1]{$^{#1}$}
\begin{thebibliography}{WLK{\etalchar{+}}25}

\bibitem[And83]{Andreief1883}
C.~Andr{\'e}ief.
\newblock Note sur une relation entre les int{\'e}grales d{\'e}finies des
  produits des fonctions.
\newblock {\em M{\'e}moires de la Soci{\'e}t{\'e} des Sciences Physiques et
  Naturelles de Bordeaux}, 2:1--14, 1883.

\bibitem[Ape79]{Apery}
Roger Apery.
\newblock Irrationalit\'e de {$\zeta(2)$} et {$\zeta(3)$}.
\newblock In {\em Luminy Conference on Arithmetic}, number~61 in Ast\'erisque,
  pages 11--13. 1979.

\bibitem[BB18]{HankelSolvability2016}
Andrew Bakan and Christian Berg.
\newblock Solvability of the {H}ankel determinant problem for real sequences.
\newblock In {\em Frontiers in orthogonal polynomials and {$q$}-series},
  volume~1 of {\em Contemp. Math. Appl. Monogr. Expo. Lect. Notes}, pages
  85--117. World Sci. Publ., Hackensack, NJ, 2018.

\bibitem[BBL18]{BBL2018}
T.~Bayraktar, T.~Bloom, and N.~Levenberg.
\newblock Pluripotential theory and convex bodies.
\newblock {\em Sb. Math.}, 209(3):352--384, 2018.

\bibitem[BC99]{BloomCalvi}
Thomas Bloom and Jean-Paul Calvi.
\newblock On the multivariate transfinite diameter.
\newblock {\em Ann. Polon. Math.}, 72(3):285--305, 1999.

\bibitem[Beu79]{Beukers}
F.~Beukers.
\newblock A note on the irrationality of {$\zeta (2)$}\ and {$\zeta (3)$}.
\newblock {\em Bull. London Math. Soc.}, 11(3):268--272, 1979.

\bibitem[Bos89]{Bos}
Len Bos.
\newblock A characteristic of points in r2 having lebesgue function of
  polynomial growth.
\newblock {\em Journal of Approximation Theory}, 56(3):316--329, 1989.

\bibitem[Bro09]{BrENS}
Francis Brown.
\newblock {Multiple zeta values and periods of moduli spaces ${\cal M}_{0 ,n}(
  \mathbb R )$}.
\newblock {\em Annales Sci. Ecole Norm. Sup.}, 42:371, 2009.

\bibitem[Bro16]{DinnerParties}
Francis Brown.
\newblock Irrationality proofs for zeta values, moduli spaces and dinner
  parties.
\newblock {\em Mosc. J. Comb. Number Theory}, 6(2-3):102--165, 2016.

\bibitem[Bro17]{NotesMot}
F.~Brown.
\newblock Notes on motivic periods.
\newblock {\em Commun. Number Theory Phys.}, 11(3):557--655, 2017.

\bibitem[Bro25]{BrVdM}
Francis Brown.
\newblock Multivariable vandermonde determinants, amalgams of matrices and
  specht modules.
\newblock {\em Journal of Algebra}, 678:253--278, 2025.

\bibitem[CM17]{CoxMau}
David~A. Cox and Sione Ma`u.
\newblock Transfinite diameter on complex algebraic varieties.
\newblock {\em Pacific J. Math.}, 291(2):279--317, 2017.

\bibitem[DR07]{DeMarcoRumely}
Laura DeMarco and Robert Rumely.
\newblock Transfinite diameter and the resultant.
\newblock {\em J. Reine Angew. Math.}, 611:145--161, 2007.

\bibitem[Jed91]{LimitsTransfinite}
Mieczyslaw Jedrzejowski.
\newblock The homogeneous transfinite diameter of a compact subset of
  {${\mathbb{C}}^N$}.
\newblock In {\em Proceedings of the {T}enth {C}onference on {A}nalytic
  {F}unctions ({S}zczyrk, 1990)}, volume~55, pages 191--205, 1991.

\bibitem[LS91]{LoeserSabbah}
F.~Loeser and C.~Sabbah.
\newblock \'{E}quations aux diff\'{e}rences finies et d\'{e}terminants
  d'int\'{e}grales de fonctions multiformes.
\newblock {\em Commentarii Mathematici Helvetici}, 66(1):458--503, 1991.

\bibitem[LW21]{LevenbergWielonsky}
Norman Levenberg and Franck Wielonsky.
\newblock On transfinite diameters in {$\Bbb C^d$} for generalized notions of
  degree.
\newblock {\em Math. Scand.}, 127(2):337--360, 2021.

\bibitem[Ma`11]{Mau}
S.~Ma`u.
\newblock Chebyshev constants and transfinite diameter on algebraic curves in
  {$\Bbb C^2$}.
\newblock {\em Indiana Univ. Math. J.}, 60(5):1767--1796, 2011.

\bibitem[Pow82]{PowerHankel}
S.C. Power.
\newblock Finite rank multivariable hankel forms.
\newblock {\em Linear Algebra and its Applications}, 48:237--244, 1982.

\bibitem[RV96]{RVzeta2}
Georges Rhin and Carlo Viola.
\newblock On a permutation group related to {$\zeta(2)$}.
\newblock {\em Acta Arith.}, 77(1):23--56, 1996.

\bibitem[RV01]{RVzeta3}
Georges Rhin and Carlo Viola.
\newblock The group structure for {$\zeta(3)$}.
\newblock {\em Acta Arith.}, 97(3):269--293, 2001.

\bibitem[SS62]{SchifferSiciak}
M.~Schiffer and J.~Siciak.
\newblock Transfinite diameter and analytic continuation of functions of two
  complex variables.
\newblock In {\em Studies in mathematical analysis and related topics}, pages
  341--358. Stanford University Press, 1962.

\bibitem[TB12]{BloomBosLevenberg}
N.~Levenberg T.~Bloom, L.~Bos.
\newblock The transfinite diameter of the real ball and simplex.
\newblock {\em Annales Polonici Mathematici}, 106(1):83--96, 2012.

\bibitem[WLK{\etalchar{+}}25]{CMF}
Shachar Weinbaum, Elyasheev Leibtag, Rotem Kalisch, Michael Shalyt, and Ido
  Kaminer.
\newblock On conservative matrix fields: Continuous asymptotics and arithmetic,
  2025.

\bibitem[Zah75]{Zaharjuta}
V.~P. Zaharjuta.
\newblock Transfinite diameter, \v{C}eby\v{s}ev constants and capacity for a
  compactum in {$C\sp{n}$}.
\newblock {\em Mat. Sb. (N.S.)}, pages 374--389, 503, 1975.

\bibitem[Zud17]{ZudilinDet}
Wadim Zudilin.
\newblock A determinantal approach to irrationality.
\newblock {\em Constr. Approx.}, 45(2):301--310, 2017.

\end{thebibliography}

\end{document}